\newtheorem{theorem}{Theorem}[section]
\newtheorem{lemma}[theorem]{Lemma}
\newtheorem{corollary}[theorem]{Corollary}
\newtheorem{proposition}[theorem]{Proposition}
\theoremstyle{definition}
\newtheorem{definition}[theorem]{Definition}
\newtheorem{remark}[theorem]{Remark}
\newtheorem{example}[theorem]{Example}
\newcommand{\dd}{{\rm d}}
\newcommand{\dis}{\displaystyle}
\newcommand{\N}{{\mathbb N}}
\newcommand{\R}{{\mathbb R}}
\newcommand{\ab}{[a,b\,]}
\DeclareMathOperator{\SV}{SV}
\DeclareMathOperator{\var}{var}
\DeclareMathOperator{\sem}{semivar}
\DeclareMathOperator{\W}{W}
\numberwithin{equation}{section}
\begin{document}

\title{On functions of bounded semivariation}
\author{
G.~A.~Monteiro\thanks{Mathematical Institute, Academy of Sciences of the Czech Republic,
Prague, Czech Republic (email: gam@math.cas.cz).Supported by RVO: 67985840 and by the Academic Human Resource Program
of the Academy of Sciences of the Czech Republic.}}
\date{}
\maketitle

\begin{abstract}
The concept of bounded variation has been generalized in many ways. In the frame 
of functions taking values in Banach space, the concept of bounded semivariation is a very  
important generalization. The aim of this paper is to provide an accessible summary on this notion, 
to illustrate it with an appropriate body of
examples, and to outline its connection with the integration theory due to Kurzweil.
\end{abstract}

\smallskip

\noindent{2010 {\it Mathematics Subject Classification}: 26A45

\smallskip

\noindent{\it Key words}. Semivariation, Kurzweil integral, variation, regulated functions.

\section{Introduction}
Different notions of variation appear when dealing with problems in infinite dimension. 
Among them, the semivariation is very frequent, being commonly found in 
studies involving convolution, Stieltjes type integration, and also in topics related to vector measures. 

Initially called $w$-property, the concept of bounded semivariation for operator-valued functions was introduced in 1936 by M. Gowurin 
in his paper on the Stieltjes integral in Banach space \cite{Gowurin}. Some decades later, 
the Gowurin $w$-property revealed  to be very useful in the 
investigation of integral representations of continuous linear transformations (see \cite{Tu} and \cite{EW}). 

Nowadays, a handful of papers make use of the concept of bounded semivariation. However, in its majority, the results 
on such type of variation are only stated with no proofs or no proper references. Besides that, 
we can observe in the literature a lack of material collecting basic results on such a concept.

In view of this, the purpose of this survey is to summarize the present knowledge on semivariation.   
The presentation does not reflect the chronological order of the discoveries,
but rather attempts to organize results in a logical framework. Moreover, in order to 
make these notes self-contained, most results are presented with a detailed proof and 
some illustrative examples are given. We trust that our citations and bibliography 
sufficiently identify the appropriate antecedent.

This survey includes, besides basic results and properties, also a section dedicated 
to the investigation of the relation between semivariation and non-absolute integrals.

\medskip

First, let us fix some notation. 

Throughout this survey $X$ and $Y$ denote Banach spaces and $L(X,Y)$ stands for the Banach space of bounded
linear operators from $X$ to $Y$. By $\|\cdot\|_X$ and $\|\cdot\|_{L(X,Y)}$ we denote the norm in $X$ and 
the usual operator norm in $L(X,Y)$, respectively. In particular, we write $L(X)=L(X,X)$ and $X^*=L(X,\R)$.

For an arbitrary function $f:\ab\to X$ we set $\|f\|_\infty=\sup_{t\in\ab}\|f(t)\|_X$.

Consider a nondegenerate closed interval $[a,b]$ and denote by $\mathcal D[a,b]$ the set of all finite divisions of $[a,b]$ of the form
\[
   D=\{\alpha_0,\alpha_1,\dots,\alpha_{\nu(D)}\},\quad a\,{=}\,\alpha_0\,{<}\,\alpha_1\,{<}\,\dots\,{<}\,\alpha_{\nu(D)}\,{=}\,b\,,
\]
where $\nu(D)\in\N$ corresponds to the number of subintervals in which $[a,b]$ is divided.

\medskip

With these concepts in hand we are ready to define the semivariation of an operator-valued function.

\smallskip

\begin{definition}\label{def-SV}
Given a function $F:\ab\to L(X,Y)$ and a division $D\in\mathcal D\ab,$ let
\[
   V(F,D,\ab)=
   	\sup\left\{\,\Bigg\|\sum_{j=1}^{\nu(D)} \big[F(\alpha_{j})-F(\alpha_{j-1})\big]\,x_j\Bigg\|_Y\,:\,x_j\in X,\,\|x_j \|_X\le 1\right\}.
\]
The {\it semivariation} of $F$ on $\ab$ is then defined by
\[
    \SV_a^b(F)=\sup\{ V(F,D,\ab)\,:\,D\in\mathcal D\ab\}.
\]
If $\SV_a^b(F)<\infty$, we say that the function $F$ is of bounded semivariation on $\ab$.
The set of all functions $F:\ab\to L(X,Y)$ of bounded 
semivariation on $\ab$ we denote by $SV(\ab,L(X,Y))$.
\end{definition}

\smallskip

If no misunderstanding can arise, we write simply $V(F,D)$ instead of $V(F,D,\ab)$.

\smallskip

\begin{remark}
The concept presented in Definition \ref{def-SV}, called $w$-property in \cite{Gowurin}, is also known as $(\mathcal B)$-variation,  
with respect to the bilinear triple $\mathcal B=(L(X,Y),X,Y)$. For details, see \cite{Sch1} and \cite{FB}.  
The terminology used in this paper is consistent with that found in the book by H\" onig \cite{H} and 
seem to be the most frequent in literature. 
However, we call the readers attention to the fact that the term `semivariation' might also appear with slight  
different formulation - for example, when applied to measure theory or to functions with values in a general Banach space. 
See, for instance, \cite{De},  \cite{Di} or, in the frames of functions with values in 
locally convex spaces, \cite{DV}.
\end{remark}

\smallskip

It is not hard to see that the semivariation is more general than the notion of variation in the sense of Jordan. 
Indeed, for $F:\ab\to L(X,Y)$, we have
\[
\SV_a^b(F)\leq\var_a^b(F)
\]
where $\var_a^b(F)$ stands for the variation of $F$ on $\ab$ and is given by
\[
\var_a^b(F)=\sup\left\{\,\sum_{j=1}^{\nu(D)}\|F(\alpha_j){-}F(\alpha_{j{-}1})\|_{L(X,Y)}\,:\,D\in\mathcal D\ab\,\right\}.
\]

Denoting by $BV(\ab,L(X,Y))$ the set of all functions $F:\ab\to L(X,Y)$ of bounded 
variation on $\ab$ (i.e $\var_a^b(F)<\infty$), clearly, 
\[
BV(\ab,L(X,Y))\subseteq SV(\ab,L(X,Y)).
\]
The relation between these two sets will be analysed in more details in Section 4.

\medskip

The following example of a function of bounded semivariation was inspired by some ideas found in \cite{Thorp}.

\smallskip

\begin{example}\label{ex.1}
Let $\ell_2$ be the Banach space of sequences $x=\{x_n\}_n$ in $\R$ such that the series $\sum_{n=1}^\infty |x_n|^2$ converges, 
equipped with the norm
\[
\|x\|_2=\Big(\sum_{n=1}^\infty |x_n|^2\Big)^{1/2}
\]
Denote by $e_k$, $k\in\N$, the canonical Schauder basis of $\ell_2$, where $e_k$ 
is the sequence whose $k$-th term is $1$ and all other terms are zero.

For each $k\in\N$, consider $y_k\in \ell_2$ given by $y_k=\frac{1}{k}e_k$, 
that is, 
\[
y_k=\{y_n^{(k)}\}_n\mbox{ \ with \ }y_k^{(k)}=\frac{1}{k}\mbox{ \ and \ }y_n^{(k)}=0\mbox{ \ for \ }n\not= k.
\]
Note that the series $\sum_{k=1}^\infty y_k$ converges in $\ell_2$ and denote by $S$ its sum. 

\medskip

Let $F:[0,1]\to L(\R,\ell_2)$ be given by
\[
\big(F(t)\big)\,x=\left\{\begin{array}{cl}
\dis x\sum_{k=1}^ny_k&\mbox{if \ }t\in(\frac{1}{n+1},\frac{1}{n}], ~ n\in\N,
\\[4mm]
x\,S&\mbox{if \ }t=0
\end{array}
\right.
\]
for $t\in[0,1]$ and $x\in\R$.

In order to prove that $F\in SV([0,1],L(\R,\ell_2))$, let us consider $D\in\mathcal D[0,1]$ with $D=\{\alpha_0,\alpha_1,\dots,\alpha_{\nu(D)}\}$.
Put 
\[
n_j=\max\{k\in\N\,:\,k\alpha_j\leq 1\}\mbox{ \ for \ } j=1,\ldots,\nu(D),
\]
and $\Lambda=\{j\,:\,n_j< n_{j-1}\}\subset\{2,\ldots,\nu(D)\}$. 
For $x_j\in\R$, $j=1,\ldots,\nu(D)$ with $|x_j|\leq 1$ we have $F(\alpha_j)x_j=\sum_{k=1}^{n_j}y_k$ and consequently
\begin{align*}
\sum_{j=1}^{\nu(D)}[F(\alpha_j)-F(\alpha_{j-1})]x_j
	&=x_1\,\Big(\sum_{k=1}^{n_1}y_k-S\Big)+\sum_{j=2}^{\nu(D)}x_j\,\Big(\sum_{k=1}^{n_j}y_k-\sum_{m=1}^{n_{j-1}}y_m\Big)
	\\&=-x_1\,\Big(\sum_{k=n_1+1}^{\infty}y_k\Big)-\sum_{j\in\Lambda}x_j\,\Big(\sum_{k=n_j+1}^{n_{j-1}}y_k\Big)
\end{align*}

For $k\in\N$, define
\begin{equation}\label{coef}
\lambda_k=\left\{\begin{array}{cl}
-x_j&\mbox{ \ if \ }n_j< k\leq n_{j-1},\quad j\in\Lambda
\\[4mm]
-x_1&\mbox{ \ if \ }k>n_1
\\[4mm]
0&\mbox{ \ otherwise}
\end{array}
\right..
\end{equation}
Using this sequence and the definition of $y_k$, we can write
\[
\Big\|\sum_{j=1}^{\nu(D)}[F(\alpha_j)-F(\alpha_{j-1})]\,x_j\Big\|_2^2=\Big\|\sum_{k=1}^\infty\lambda_k\,y_k\Big\|_2^2
	=\sum_{k=2}^\infty\Big|\frac{\lambda_k}{k}\Big|^2\leq\sum_{k=2}^\infty\frac{1}{k^2}.
\]
This shows that $\SV_0^1(F)\leq \dis\Big(\sum_{k=2}^\infty\frac{1}{k^2}\Big)^{1/2}=\sqrt{\frac{\pi^2}{6}-1}$

We claim that $\SV_0^1(F)=\sqrt{\frac{\pi^2}{6}-1}$. 
Indeed, fixed an arbitrary $N\in\N$, consider $D_N\in\mathcal D[0,1]$ given by 
\[
D_N=\Big\{0,\frac{1}{N},\frac{1}{N-1},\dots,\frac{1}{2},1\Big\},
\]
Thus, for $x_j\in\R$, $j=1,\ldots,N$ with $|x_j|\leq 1$ we have
\begin{align*}
&\Big\|\sum_{\ell=1}^{N-1}[F(\textstyle\frac{1}{\ell})-F(\frac{1}{\ell+1})]x_\ell+[F(\frac{1}{N})-F(0)]x_N\Big\|_2
	\\&\qquad\qquad
	=\Big\|\sum_{\ell=1}^{N-1}x_\ell y_{\ell+1}+\sum_{k=N+1}^{\infty}x_N y_k\Big\|_2
	=\Big(\sum_{k=2}^\infty\Big|\frac{\tilde{x}_k}{k}\Big|^2\Big)^{1/2}
\end{align*}
where $\tilde{x}_k=x_{k-1}$ if $k=2,\ldots, N,$ and $\tilde{x}_k=x_{N}$ for $k\in\N, \,k>N$. 
Taking the supremum over all possible choices of $x_j\in\R$, $j=1,\ldots,N$ with $|x_j|\leq 1$ we obtain 
\[
V(F,D_N,[0,1])=\Big(\sum_{k=2}^\infty\frac{1}{k^2}\Big)^{1/2},
\]
which proves the claim.
\end{example}

\begin{remark}
In the particular case $X=\R$ the space $SV(\ab,L(\R,Y))$ can be regarded as the space of the 
functions of weak bounded variation, usually denoted by $BW(\ab,Y)$ (c.f. \cite{H1}). This is clear once we recall that the weak variation 
of a function $f:\ab\to Y$ is given by $\W_a^b(f)=\sup\{ W(f,D)\,:\,D\in\mathcal D\ab\}$ where
\[
   W(f,D)=
   	\sup\left\{\,\Big\|\sum_{j=1}^{\nu(D)} \big[f(\alpha_{j})-f(\alpha_{j-1})\big]\,\lambda_j\Big\|_Y\,:\,\lambda_j\in \R,\,|\lambda_j |\le 1\right\}
   	\mbox{ for \ }D\in\mathcal D\ab.
\]

Therefore, we can say that the semivariation of the function $F:[0,1]\to L(\R,\ell_2)$ in the Example \ref{ex.1} coincides with the weak variation 
of $f:[0,1]\to\ell_2$ given by $f(t)=(F(t))1$ for $t\in[0,1]$.
\end{remark}

\section{Semivariation: basic results}
This section summarizes basic properties of the semivariation that are often mentioned without proof in papers which are directly or 
indirectly connected to such a notion. In order to make this work as complete as possible, all the proofs are included. 
Most of the results can be found, for instance, in \cite{H1}, \cite{H} and \cite{Sch-conv}. 

\medskip

We start by noting that $SV(\ab,L(X,Y))$ is a vector space.

\smallskip

\begin{proposition}
Let $F,\,G\in SV(\ab,L(X,Y))$ and $\lambda\in\R$ be given. Then both functions  
$(F+G)$ and $(\lambda\,F)$ are of bounded semivariation on $\ab$, and 
\begin{equation}\label{linear}
\SV_a^b (F+G)\leq \SV_a^b (F) + \SV_a^b (G)\quad\mbox{and}\quad \SV_a^b (\lambda F)=|\lambda|\,\SV_a^b(F).
\end{equation}
\end{proposition}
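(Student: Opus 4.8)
The plan is to establish both inequalities in \eqref{linear} by first controlling the quantity $V(\cdot,D,\ab)$ for a fixed division $D$ and then taking the supremum over all $D\in\mathcal D\ab$. The key observation is that for a fixed $D=\{\alpha_0,\dots,\alpha_{\nu(D)}\}$ and fixed vectors $x_j\in X$ with $\|x_j\|_X\le 1$, the sum $\sum_{j=1}^{\nu(D)}[(F+G)(\alpha_j)-(F+G)(\alpha_{j-1})]x_j$ splits additively as $\sum_{j=1}^{\nu(D)}[F(\alpha_j)-F(\alpha_{j-1})]x_j+\sum_{j=1}^{\nu(D)}[G(\alpha_j)-G(\alpha_{j-1})]x_j$. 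Applying the triangle inequality in $Y$ and then bounding each of the two resulting sums by $V(F,D)$ and $V(G,D)$ respectively, we get $\|\sum_{j=1}^{\nu(D)}[(F+G)(\alpha_j)-(F+G)(\alpha_{j-1})]x_j\|_Y\le V(F,D)+V(G,D)$. Taking the supremum over the admissible $x_j$ yields $V(F+G,D)\le V(F,D)+V(G,D)\le \SV_a^b(F)+\SV_a^b(G)$, and then taking the supremum over $D\in\mathcal D\ab$ gives the first inequality; in particular $F+G$ has bounded semivariation.

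For the scalar multiplication part, I would again fix $D$ and admissible $x_j$, and use the homogeneity of the norm: $\|\sum_{j=1}^{\nu(D)}[(\lambda F)(\alpha_j)-(\lambda F)(\alpha_{j-1})]x_j\|_Y=|\lambda|\,\|\sum_{j=1}^{\nu(D)}[F(\alpha_j)-F(\alpha_{j-1})]x_j\|_Y$. Since this identity holds termwise inside the supremum defining $V$, we obtain $V(\lambda F,D)=|\lambda|\,V(F,D)$ (this is valid for $\lambda=0$ as well, both sides being $0$). Taking the supremum over $D$ then gives $\SV_a^b(\lambda F)=|\lambda|\,\SV_a^b(F)$, which is finite, so $\lambda F$ is of bounded semivariation.

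I do not expect any genuine obstacle here; the proof is entirely routine once one recognizes that everything reduces to manipulations under the supremum. The only minor point requiring a little care is the direction of the equality $\SV_a^b(\lambda F)=|\lambda|\,\SV_a^b(F)$: one gets ``$\le$'' directly from $V(\lambda F,D)\le|\lambda|\,V(F,D)$, and the reverse inequality either from the exact identity $V(\lambda F,D)=|\lambda|\,V(F,D)$ noted above, or, for $\lambda\neq 0$, by applying the ``$\le$'' direction to $\lambda F$ with scalar $1/\lambda$. I would present the exact identity route as it is cleaner. The case $\lambda=0$ should be mentioned explicitly (or subsumed, since $\SV_a^b$ of the zero function is plainly $0$).
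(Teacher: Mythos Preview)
Your proposal is correct and follows exactly the same approach as the paper: the paper's proof simply records the two relations $V(F+G,D)\le V(F,D)+V(G,D)$ and $V(\lambda F,D)=|\lambda|\,V(F,D)$ for every $D\in\mathcal D\ab$ and notes that the conclusion follows. Your write-up is a more detailed version of precisely this argument.
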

\begin{proof}
The assertions follow from the fact that the relations
\[
V(F+G,D)\leq V(F,D)+V(G,D)
\quad\mbox{and}\quad
V(\lambda\,F,D)= |\lambda|\,V(F,D)
\]
hold  for every division $D\in\mathcal D\ab$.
\end{proof}

\smallskip

According to \eqref{linear}, $\SV_a^b(\,\cdot\,)$ defines a seminorm on the space of functions of bounded semivariation. 
On the other hand, if we put
\begin{equation}\label{SV-norm}
\|F\|_{SV}=\|F(a)\|_{L(X,Y)}+\SV_a^b(F)\quad\mbox{for \ }F\in SV(\ab,L(X,Y),
\end{equation}
then $SV(\ab,L(X,Y))$ becomes a normed space. This fact is a consequence of 
\eqref{linear} together with the following assertion.

\smallskip

\begin{proposition} Let $F\in SV(\ab,L(X,Y))$. 
Then $\SV_a^b(F)=0$ if and only if $F\equiv C$ for some fixed operator $C\in L(X,Y)$.
\end{proposition}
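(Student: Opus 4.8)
The plan is to prove both directions of the equivalence, with the forward direction being the substantive one. The reverse implication is immediate: if $F\equiv C$ for some fixed $C\in L(X,Y)$, then for every division $D$ and every choice of $x_j$ with $\|x_j\|_X\le1$, each difference $F(\alpha_j)-F(\alpha_{j-1})$ equals the zero operator, so the sum in the definition of $V(F,D)$ vanishes; hence $V(F,D)=0$ for all $D\in\mathcal D\ab$ and therefore $\SV_a^b(F)=0$.

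For the forward direction, suppose $\SV_a^b(F)=0$. I want to show $F(t)=F(a)$ for every $t\in\ab$, i.e. the operator $F(t)-F(a)$ is zero, which means $\bigl(F(t)-F(a)\bigr)x=0$ for every $x\in X$. Fix $t\in(a,b]$ and consider the division $D=\{a,t,b\}$ if $t<b$, or $D=\{a,b\}$ if $t=b$; more simply, one can always enlarge any division, so it suffices to work with a division containing $t$. Given $x\in X$ with $\|x\|_X\le1$, choose in the defining supremum $V(F,D)$ the test elements equal to $x$ on the subinterval ending at $t$ and equal to $0$ on all other subintervals. Then the sum $\sum_j\bigl[F(\alpha_j)-F(\alpha_{j-1})\bigr]x_j$ telescopes down to the single term $\bigl(F(t)-F(a)\bigr)x$ when $t$ is the first point after $a$ in the chosen division; choosing $D=\{a,t,b\}$ (or $D=\{a,t\}$ when $t=b$) achieves exactly this. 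Hence
\[
\bigl\|\bigl(F(t)-F(a)\bigr)x\bigr\|_Y\le V(F,D)\le\SV_a^b(F)=0,
\]
so $\bigl(F(t)-F(a)\bigr)x=0$. Since this holds for all $x$ in the unit ball of $X$, and hence by homogeneity for all $x\in X$, the operator $F(t)-F(a)$ is zero, i.e. $F(t)=F(a)$. As $t\in(a,b]$ was arbitrary and trivially $F(a)=F(a)$, we conclude $F\equiv C$ with $C=F(a)$.

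I do not anticipate a serious obstacle here; the only point requiring a little care is the selection of test vectors so that the sum collapses to the single increment $\bigl(F(t)-F(a)\bigr)x$ — this is handled cleanly by using a three-point (or two-point) division together with the freedom to set all but one of the $x_j$ equal to zero, which is permitted since $\|0\|_X=0\le1$. One could alternatively phrase the argument using an arbitrary division containing $t$ and only the increments on $[a,t]$, but reducing to $D=\{a,t,b\}$ keeps the bookkeeping minimal.
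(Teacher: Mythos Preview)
Your proof is correct and follows essentially the same approach as the paper: both use the division $D=\{a,t,b\}$ and choose the test vectors $x_1=x$, $x_2=0$ to isolate the single increment $[F(t)-F(a)]x$, then bound its norm by $V(F,D)\le\SV_a^b(F)=0$. The paper handles the case $t=b$ implicitly (since $\{a,b,b\}=\{a,b\}$), while you separate it out, but this is a cosmetic difference.
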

\begin{proof} Clearly, the semivariation of a constant function is zero. Conversely, assume that $\SV_a^b(F)=0$. Given 
$t\in(a,b]$, if we consider the division $D=\{a,t,b\}$ of $\ab$, for any $x\in X$ with $\|x\|_X\leq 1$ we have
\[
\|F(t)x-F(a)x\|_Y=\big\|[F(t)-F(a)]x+[F(b)-F(t)]0\big\|_Y\leq V(F,D,\ab).
\]
Therefore $[F(t)-F(a)]=0\in L(X,Y)$, that is, 
$F$ is a constant function.
\end{proof}

\smallskip

\begin{remark} It is worth mentioning that in the definition of the norm $\|\cdot\|_{SV}$ we can use the fixed value of 
the function in any point of the interval, that is, taking $c\in\ab$, we can consider
\[
\|F\|_{SV}=\|F(c)\|_{L(X,Y)}+\SV_a^b(F),\quad F\in SV(\ab,L(X,Y)).
\]
The choice of the left-ending point of the interval seems to be the most common in the literature, though. 
Therefore, in this work, we assume the norm in $SV(\ab,L(X,Y))$ as introduced in \eqref{SV-norm}.
\end{remark}

\smallskip

Note that, for $F:\ab\to L(X,Y)$ and $t\in\ab$ we have
\[
\|F(t)\|_{L(X,Y)}\leq \|F(a)\|_{L(X,Y)}+\SV_a^b(F).
\]
Hence, every function $F\in SV(\ab,L(X,Y))$ is bounded and
\[
\|F\|_\infty\leq \|F\|_{SV}.
\]
In view of this, we can say that the topology induced in the space $SV(\ab,L(X,Y))$ 
by the supremum norm is weaker than the one induced by $\|\cdot\|_{SV}$.

\medskip

In the sequel we prove that the space of functions of bounded semivariation is complete when equipped with the norm $\|\cdot\|_{SV}$ 
(c.f. \cite[Proposition~4]{Sch-conv} or \cite[I.3.3]{H1}). To this aim, we will need the following convergence result.

\smallskip

\begin{lemma}\label{Helly}
Let $F:\ab\to L(X,Y)$, a sequence $\{F_n\}_n\subset SV(\ab,L(X,Y))$ and a 
constant $M>0$ be such that
\[
\SV_a^b(F_n)\leq M\quad\mbox{for every \ }n\in\N,
\]
and
\[
\lim_{n\to\infty}\|F_n(t)x-F(t)x\|_Y=0\quad\mbox{for every \ }t\in \ab\mbox{ \ and \ }x\in X.
\]
Then \ $\SV_a^b(F)\leq M$.
\end{lemma}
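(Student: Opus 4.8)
The plan is to fix an arbitrary division $D = \{\alpha_0, \alpha_1, \dots, \alpha_{\nu(D)}\} \in \mathcal D\ab$ and an arbitrary choice of vectors $x_1, \dots, x_{\nu(D)} \in X$ with $\|x_j\|_X \le 1$, and to show directly that
\[
\Big\| \sum_{j=1}^{\nu(D)} \big[F(\alpha_j) - F(\alpha_{j-1})\big]\, x_j \Big\|_Y \le M.
\]
Taking the supremum over all such choices of $x_j$ then gives $V(F,D) \le M$, and taking the supremum over all $D \in \mathcal D\ab$ yields $\SV_a^b(F) \le M$, which is exactly the claim.

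First I would observe that the sum inside the norm is a finite sum; hence for each $n$ the triangle inequality gives
\[
\Big\| \sum_{j=1}^{\nu(D)} \big[F(\alpha_j) - F(\alpha_{j-1})\big] x_j \Big\|_Y
\le \Big\| \sum_{j=1}^{\nu(D)} \big[F_n(\alpha_j) - F_n(\alpha_{j-1})\big] x_j \Big\|_Y
+ \sum_{j=1}^{\nu(D)} \big\| \big[(F - F_n)(\alpha_j) - (F - F_n)(\alpha_{j-1})\big] x_j \big\|_Y.
\]
The first term on the right is bounded by $V(F_n, D) \le \SV_a^b(F_n) \le M$ for every $n$, using the definition of semivariation and the hypothesis. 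For the second term, each of the finitely many summands has the form $\|(F_n - F)(\alpha_{j-1}) x_j - (F_n - F)(\alpha_j) x_j\|_Y$, and since the pointwise convergence hypothesis applies at each of the finitely many points $\alpha_0, \dots, \alpha_{\nu(D)}$ and for each of the finitely many vectors $x_j$, this whole term tends to $0$ as $n \to \infty$. Therefore, letting $n \to \infty$ in the displayed inequality yields $\big\| \sum_{j=1}^{\nu(D)} [F(\alpha_j) - F(\alpha_{j-1})] x_j \big\|_Y \le M$.

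There is no serious obstacle here: the argument is essentially a routine "pointwise limit preserves a sup-type bound" lemma (an analogue of the classical fact that bounded variation is lower semicontinuous under pointwise convergence). The only point requiring a little care is that the error term must be handled \emph{after} fixing the division $D$ and the vectors $x_j$, so that it is a finite sum whose limit is $0$; one cannot interchange the limit with the suprema defining $\SV$. Once the bound is established for fixed $D$ and fixed $x_j$, passing to the two suprema is immediate and completes the proof.
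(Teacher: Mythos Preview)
Your proof is correct and follows essentially the same approach as the paper: fix a division and unit vectors, split via the triangle inequality into the $F_n$-sum (bounded by $M$) plus a finite error term that tends to zero by pointwise convergence, and then pass to the suprema. The only cosmetic difference is that the paper phrases the vanishing of the error term with an explicit $\varepsilon/(2\nu(D))$ choice rather than a direct limit, but the argument is the same.
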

\begin{proof}
Let $D=\{\alpha_0,\alpha_1,\dots,\alpha_{\nu(D)}\}$ be a division 
of $\ab$ and let $x_j\in X$, $j=1,\ldots,\nu(D)$ with $\|x_j\|_X\leq 1$. 
Note that, for each $n\in\N$, we have
\begin{align}\nonumber
&\Big\|\sum_{j=1}^{\nu(D)} [F(\alpha_{j})-F(\alpha_{j-1})]\,x_j\Big\|_Y
	\\\nonumber&\quad\leq \Big\|\sum_{j=1}^{\nu(D)} [F_n(\alpha_{j})-F_n(\alpha_{j-1})]\,x_j\Big\|_Y
		+\Big\|\sum_{j=1}^{\nu(D)} [F(\alpha_{j})-F_n(\alpha_{j})-F(\alpha_{j-1})+F_n(\alpha_{j-1})]\,x_j\Big\|_Y
	\\\label{iq}&\quad \leq M+\sum_{j=1}^{\nu(D)}\| [F(\alpha_{j})-F_{n}(\alpha_{j})]x_j\|_Y
		+\sum_{j=1}^{\nu(D)}\|[F(\alpha_{j-1})-F_{n}(\alpha_{j-1})]x_j\|_Y
\end{align}

Given $\varepsilon>0$, there is $N_D\in\N$ such that
\[
\|[F(\alpha_{j})-F_{N_D}(\alpha_{j})]x_i\|_Y<\frac{\varepsilon}{2\,\nu(D)}\quad\mbox{for \ } j=0,1,\ldots,\nu(D),
\]
where $i=j, j+1$ (whenever it has a sense). Therefore, taking $n=N_D$ in \eqref{iq} we obtain
\[
\Big\|\sum_{j=1}^{\nu(D)} [F(\alpha_{j})-F(\alpha_{j-1})]\,x_j\Big\|_Y< M+\varepsilon
\]
which implies that
\[
V(F,D)\leq M+\varepsilon.
\]
Since $\varepsilon>0$ is arbitrary, it follows that $V(F,D)\leq M$ for every $D\in\mathcal D\ab$, and consequently $\SV_a^b(F)\leq M$. 
\end{proof}

\smallskip

The previous convergence result usually appears applied to some integration theory
This type of result, often mentioned as Helly-Bray theorem (cf. \cite[Theorem I.5.8]{H} or \cite{Du}), will be study in 
Section 5 in the frames of Kurzweil-Stieltjes integral. 

Now, we are ready to prove the 
completeness of the space $SV(\ab,L(X,Y))$. 

\smallskip

\begin{theorem}
$SV(\ab,L(X,Y))$ is a Banach space with respect to the norm $\|\cdot\|_{SV}$.
\end{theorem}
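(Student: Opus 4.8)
The plan is the classical completeness argument: extract a candidate limit using the weaker supremum norm, then promote it to a limit in $\|\cdot\|_{SV}$ by invoking Lemma~\ref{Helly} — once for the limit function itself and once for a sequence of differences.

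First I would take a sequence $\{F_n\}_n\subset SV(\ab,L(X,Y))$ that is Cauchy with respect to $\|\cdot\|_{SV}$. Since we already observed that $\|\cdot\|_\infty\le\|\cdot\|_{SV}$, the sequence $\{F_n\}_n$ is Cauchy in the supremum norm as well; as $L(X,Y)$ is a Banach space, this produces a function $F:\ab\to L(X,Y)$ with $\|F_n-F\|_\infty\to 0$. In particular, for every $t\in\ab$ and $x\in X$ one has $\|F_n(t)x-F(t)x\|_Y\le\|F_n-F\|_\infty\,\|x\|_X\to 0$, so the strong pointwise convergence hypothesis of Lemma~\ref{Helly} holds. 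Moreover, a Cauchy sequence is bounded, so there is $M>0$ with $\SV_a^b(F_n)\le\|F_n\|_{SV}\le M$ for all $n$; Lemma~\ref{Helly} then yields $\SV_a^b(F)\le M<\infty$, i.e. $F\in SV(\ab,L(X,Y))$.

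It remains to show $\|F_n-F\|_{SV}\to 0$. Given $\varepsilon>0$, choose $N\in\N$ so that $\|F_n-F_m\|_{SV}<\varepsilon$ for all $m,n\ge N$; in particular $\SV_a^b(F_n-F_m)\le\varepsilon$ and $\|F_n(a)-F_m(a)\|_{L(X,Y)}<\varepsilon$ for such $m,n$. Now fix $n\ge N$ and apply Lemma~\ref{Helly} to the sequence $\{F_n-F_m\}_m$: it has semivariation bounded by $\varepsilon$ for $m\ge N$, and $(F_n-F_m)(t)x\to(F_n-F)(t)x$ for all $t$ and $x$ (again by $\|F_m-F\|_\infty\to 0$). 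Hence $\SV_a^b(F_n-F)\le\varepsilon$. Letting $m\to\infty$ in $\|F_n(a)-F_m(a)\|_{L(X,Y)}<\varepsilon$ gives $\|F_n(a)-F(a)\|_{L(X,Y)}\le\varepsilon$, so $\|F_n-F\|_{SV}\le 2\varepsilon$ for all $n\ge N$. Since $\varepsilon>0$ was arbitrary, $F_n\to F$ in $\|\cdot\|_{SV}$, and the space is complete.

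The only genuinely delicate point is the second application of Lemma~\ref{Helly}, namely to the \emph{difference} sequence $\{F_n-F_m\}_m$: it is this step that converts the $\|\cdot\|_{SV}$-Cauchy condition into actual $\|\cdot\|_{SV}$-convergence. Everything else is routine bookkeeping with the inequality $\|\cdot\|_\infty\le\|\cdot\|_{SV}$ and the completeness of $L(X,Y)$.
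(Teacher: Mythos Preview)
Your proof is correct and follows essentially the same approach as the paper's: construct the pointwise (in fact uniform) limit $F$, use Lemma~\ref{Helly} together with the boundedness of the Cauchy sequence to place $F$ in $SV(\ab,L(X,Y))$, and then pass to the limit in $m$ in the Cauchy estimate to obtain $\SV_a^b(F_n-F)\le\varepsilon$. The only cosmetic difference is that you package this last step as a second invocation of Lemma~\ref{Helly} applied to the difference sequence $\{F_n-F_m\}_m$, whereas the paper unfolds that same computation by hand on a fixed division.
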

\begin{proof}
Let $\{F_n\}_n$ be a Cauchy sequence in $SV(\ab,L(X,Y))$ with respect to the norm $\|\cdot\|_{SV}$.
This means that given $\varepsilon>0$ there exists $n_0\in\N$ such that
\begin{equation}\label{Cauchy-seq}
\|F_n(t)-F_m(t)\|_{L(X,Y)}\leq \|F_n-F_m\|_{SV}<\varepsilon, \quad n,\,m\geq n_0 \mbox{ \ and \ }t\in\ab.
\end{equation}
Hence, for each $t\in\ab$, $\{F_n(t)\}_n$ is a Cauchy sequence in $L(X,Y)$ which implies 
that there exists $F(t)\in L(X,Y)$ such that
\[
\lim_{n\to\infty}\|F_n(t)-F(t)\|_{L(X,Y)}=0.
\]
Moreover, due to \eqref{Cauchy-seq}, this convergence is uniform on $\ab$. By the fact that $\{F_n\}_n$ is a Cauchy sequence there 
exists $M>0$ such that $\SV_a^b(F_n)\leq M$ for every $n\in\N$. Therefore, by Lemma \ref{Helly} $F\in SV(\ab,L(X,Y))$.

It remains to show that the convergence is true also in the topology induced by the norm $\|\cdot\|_{SV}$.
To this aim, consider a division $D=\{\alpha_0,\alpha_1,\dots,\alpha_{\nu(D)}\}$ 
of $\ab$ and arbitrary $x_j\in X$, $j=1,\ldots,\nu(D)$ with $\|x_j\|_X\leq 1$. By \eqref{Cauchy-seq}, for $n,\,m\geq n_0$, we have
\[
\Big\|\sum_{j=1}^{\nu(D)} [F_n(\alpha_{j})-F_m(\alpha_{j})-F_n(\alpha_{j-1})+F_m(\alpha_{j-1})]\,x_j\Big\|_Y<\varepsilon.
\]
Thus taking the limit $m\to\infty$ we obtain
\[
\Big\|\sum_{j=1}^{\nu(D)} [F_n(\alpha_{j})-F(\alpha_{j})-F_n(\alpha_{j-1})+F(\alpha_{j-1})]\,x_j\Big\|_Y\leq\varepsilon,
\]
that is, $V((F_{n}-F),D)\leq\varepsilon$, for $n\geq n_0$. Since the division $D\in\mathcal D\ab$ is arbitrary, it follows that 
$\lim_{n\to\infty}\SV(F_n-F)=0$, concluding the proof.
\end{proof}

\smallskip

The following theorem proves that the functions of bounded variation are multipliers for the space $SV(\ab,L(X,Y))$ (see 
\cite[Lemma I.1.11]{H}).

\smallskip

\begin{theorem}
Let $F\in SV([a,b],L(X,Y))$ and $G\in BV([a,b],L(X))$. Consider the function $FG:\ab\to L(X,Y)$ given by $(FG)(t)=F(t)\,G(t)$ for $t\in\ab$. 
Then $F\,G\in SV([a,b],L(X,Y))$ and
\[
\SV_a^b(F\,G)\leq \|F\|_\infty\,\var_a^b(G)+\|G\|_\infty\,\SV_a^b(F).
\]
\end{theorem}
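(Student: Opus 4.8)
The plan is to estimate $V(FG, D)$ for an arbitrary division $D = \{\alpha_0, \dots, \alpha_{\nu(D)}\}$ by inserting and subtracting a suitable mixed term in each summand. For fixed $x_j \in X$ with $\|x_j\|_X \le 1$, I would write
\[
[F(\alpha_j)G(\alpha_j) - F(\alpha_{j-1})G(\alpha_{j-1})]\,x_j
= [F(\alpha_j) - F(\alpha_{j-1})]\,G(\alpha_j)\,x_j + F(\alpha_{j-1})\,[G(\alpha_j) - G(\alpha_{j-1})]\,x_j,
\]
and then bound the norm of the full sum by the norms of the two resulting sums separately via the triangle inequality.

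For the second sum, I would simply estimate
\[
\Big\| \sum_{j=1}^{\nu(D)} F(\alpha_{j-1})\,[G(\alpha_j) - G(\alpha_{j-1})]\,x_j \Big\|_Y
\le \sum_{j=1}^{\nu(D)} \|F(\alpha_{j-1})\|_{L(X,Y)}\, \|G(\alpha_j) - G(\alpha_{j-1})\|_{L(X)}\, \|x_j\|_X
\le \|F\|_\infty\, \var_a^b(G),
\]
using $\|x_j\|_X \le 1$ and the definition of the variation of $G$. For the first sum, the key observation is that each vector $z_j := G(\alpha_j)\,x_j \in X$ satisfies $\|z_j\|_X \le \|G(\alpha_j)\|_{L(X)} \le \|G\|_\infty$, so after dividing by $\|G\|_\infty$ (assuming it is nonzero; the degenerate case $G \equiv 0$ is trivial) the vectors $z_j / \|G\|_\infty$ are admissible test vectors in the definition of $V(F, D)$. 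Hence
\[
\Big\| \sum_{j=1}^{\nu(D)} [F(\alpha_j) - F(\alpha_{j-1})]\, G(\alpha_j)\, x_j \Big\|_Y
\le \|G\|_\infty \, V(F, D) \le \|G\|_\infty \, \SV_a^b(F).
\]
Combining the two bounds gives $\|\sum_j [(FG)(\alpha_j) - (FG)(\alpha_{j-1})]\,x_j\|_Y \le \|F\|_\infty \var_a^b(G) + \|G\|_\infty \SV_a^b(F)$; taking the supremum over admissible $x_j$ yields $V(FG, D) \le \|F\|_\infty \var_a^b(G) + \|G\|_\infty \SV_a^b(F)$, and then the supremum over $D \in \mathcal D[a,b]$ gives the claimed inequality, which in particular shows $FG \in SV([a,b], L(X,Y))$ since $F$ is bounded (being of bounded semivariation) and $G$ is bounded (being of bounded variation).

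The only real subtlety — and the step I would be most careful about — is the handling of the first sum: one must split off the factor $G(\alpha_j)$ so that it is absorbed into the test vectors rather than left attached to $F$, and this requires the composition order $F(\alpha_j)G(\alpha_j)$ to be exactly as stated (so that $G(\alpha_j)$ maps $X$ into $X$ and $F$ then maps $X$ into $Y$). The bound $\|G(\alpha_j)\|_{L(X)} \le \|G\|_\infty$ is immediate from the definition of $\|\cdot\|_\infty$ for operator-valued functions, and the fact that $\|F\|_\infty < \infty$ was already established earlier in the excerpt (every function of bounded semivariation is bounded). No deep machinery is needed; the argument is a routine but careful application of the triangle inequality together with the defining suprema.
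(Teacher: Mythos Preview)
Your proof is correct and follows essentially the same approach as the paper: insert a mixed term, split via the triangle inequality, bound one sum by $\|F\|_\infty\,\var_a^b(G)$ and absorb $G(\cdot)x_j/\|G\|_\infty$ into the test vectors for the other. The only cosmetic difference is that the paper inserts $F(\alpha_j)G(\alpha_{j-1})$ rather than your $F(\alpha_{j-1})G(\alpha_j)$, which is immaterial; you also handle the degenerate case $\|G\|_\infty=0$, which the paper leaves implicit.
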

\begin{proof}
Consider a division $D=\{\alpha_0,\alpha_1,\dots,\alpha_{\nu(D)}\}$ of $\ab$ and let $x_j\in X$, $j=1\ldots, \nu(D)$ 
with $\|x_j\|_X\leq 1$. Therefore
\begin{align*}
&\Big\|\sum_{j=1}^{\nu(D)}[(F\,G)(\alpha_j)-(F\,G)(\alpha_{j-1})]\,x_j\Big\|_Y\\
&\quad=\Big\|\sum_{j=1}^{\nu(D)}[F(\alpha_j)\,G(\alpha_j)-F(\alpha_j)\,G(\alpha_{j-1})
	+F(\alpha_j)\, G(\alpha_{j-1})-F(\alpha_{j-1})\, G(\alpha_{j-1})]\,x_j\Big\|_Y\\
&\quad\leq \Big\|\sum_{j=1}^{\nu(D)}F(\alpha_j)[G(\alpha_j)-G(\alpha_{j-1})]\,x_j\Big\|_Y
	+\Big\|\sum_{j=1}^{\nu(D)}[F(\alpha_j)-F(\alpha_{j-1})]\,G(\alpha_{j-1})\,x_j\Big\|_Y\\
&\quad\leq \|F\|_\infty\,\sum_{j=1}^{\nu(D)}\|G(\alpha_j)-G(\alpha_{j-1})\|_{L(X)}
	+\|G\|_\infty\left\|\sum_{j=1}^{\nu(D)}[F(\alpha_j)-F(\alpha_{j-1})]\frac{\,G(\alpha_{j-1})\,x_j}{\|G\|_\infty}\right\|_Y\\
&\quad\leq \|F\|_\infty\,\var_a^b(G)+\|G\|_\infty\,\SV_a^b(F).
\end{align*}
This implies that 
\[
V((FG),D)\leq \|F\|_\infty\,\var_a^b(G)+\|G\|_\infty\,\SV_a^b(F)
\]
for every $D\in\mathcal{D}\ab$, wherefrom the result follows.
\end{proof}

\smallskip

The next theorem presents some algebraic properties of the semivariation.

\smallskip

\begin{theorem}\label{basic}
If $F:\ab\to L(X,Y)$ and $[c,d]\subset\ab$, then 
\[
\SV_c^d(F)\leq \SV_a^b(F).
\]
Moreover,
\begin{equation}\label{add}
\SV_a^b (F)\leq \SV_a^c (F) + \SV_c^b (F)\quad\mbox{for \ }c\in\ab.
\end{equation}
\end{theorem}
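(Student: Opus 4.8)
The statement has two parts, both of which should be straightforward consequences of the definition of semivariation applied to cleverly chosen (or restricted) divisions. I will treat them in the order stated.

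For the monotonicity claim, I want to show $\SV_c^d(F)\le\SV_a^b(F)$ whenever $[c,d]\subset\ab$. The idea is: given any division $D'=\{\beta_0,\dots,\beta_{\mu}\}$ of $[c,d]$ and any choice of $x_j\in X$ with $\|x_j\|_X\le 1$, I enlarge $D'$ to a division $D$ of $\ab$ by adjoining the points $a$ and $b$ (i.e., $D=\{a\}\cup D'\cup\{b\}$, dropping duplicates if $c=a$ or $d=b$). On the new subintervals $[a,c]$ and $[d,b]$ I simply pick the associated vectors to be $0\in X$. Then the sum $\bigl\|\sum[F(\alpha_j)-F(\alpha_{j-1})]x_j\bigr\|_Y$ over $D$ equals exactly $\bigl\|\sum_{j=1}^{\mu}[F(\beta_j)-F(\beta_{j-1})]x_j\bigr\|_Y$, because the extra terms vanish. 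Taking the supremum over the $x_j$ gives $V(F,D',[c,d])\le V(F,D,\ab)\le\SV_a^b(F)$, and then the supremum over $D'$ yields the claim.

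For the superadditivity-type inequality \eqref{add}, fix $c\in\ab$ and take an arbitrary division $D=\{\alpha_0,\dots,\alpha_{\nu(D)}\}$ of $\ab$ together with vectors $x_j$, $\|x_j\|_X\le 1$. First I refine $D$ by inserting the point $c$ if it is not already there; since inserting a point can only increase $V(F,\cdot)$ (this follows by the same zero-padding trick, or is a standard observation one may record in passing), it suffices to bound $V(F,D,\ab)$ for divisions $D$ containing $c$, say $c=\alpha_\ell$. Then I split the sum $\sum_{j=1}^{\nu(D)}[F(\alpha_j)-F(\alpha_{j-1})]x_j$ at $j=\ell$ into a sum over $j\le\ell$ and a sum over $j>\ell$, apply the triangle inequality in $Y$, and recognise the first piece as being bounded by $V(F,D\cap[a,c],[a,c])\le\SV_a^c(F)$ and the second by $V(F,D\cap[c,b],[c,b])\le\SV_c^b(F)$. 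Taking the supremum over the $x_j$ gives $V(F,D,\ab)\le\SV_a^c(F)+\SV_c^b(F)$, and then the supremum over all $D$ (refined to contain $c$) gives \eqref{add}.

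The only point that needs a little care — and which I regard as the main (minor) obstacle — is the remark that refining a division does not decrease $V(F,\cdot)$; without it the splitting argument for \eqref{add} does not immediately apply to divisions $D$ that avoid $c$. This is handled exactly as in the monotonicity argument: inserting a new point $\gamma\in(\alpha_{k-1},\alpha_k)$ and assigning the vector $x_k$ to \emph{both} new subintervals $[\alpha_{k-1},\gamma]$ and $[\gamma,\alpha_k]$ reproduces the term $[F(\alpha_k)-F(\alpha_{k-1})]x_k=[F(\alpha_k)-F(\gamma)]x_k+[F(\gamma)-F(\alpha_{k-1})]x_k$ unchanged, so $V(F,D,\ab)\le V(F,D\cup\{\gamma\},\ab)$. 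Everything else is a routine application of the triangle inequality and the definition of the supremum.
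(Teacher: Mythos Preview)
Your proof is correct and follows essentially the same approach as the paper: enlarge a division of $[c,d]$ to one of $\ab$ by adjoining the endpoints (zero-padding the new slots) for the monotonicity, and for \eqref{add} refine $D$ by inserting $c$, then split and bound by $\SV_a^c(F)+\SV_c^b(F)$. Your write-up is in fact a bit more explicit than the paper's, since you justify the refinement monotonicity $V(F,D,\ab)\le V(F,D\cup\{\gamma\},\ab)$ which the paper simply uses without comment.
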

\begin{proof}
It is easy to see that, for every division $D$ of $[c,d]$, 
taking $\tilde{D}=D\cup\{a,b\}$, we have $\tilde{D}\in\mathcal D\ab$ and
\[
V(F,D,[c,d])\leq V(F,\tilde{D},\ab)\leq \SV_a^b(F),
\]
Therefore $\SV_c^d(F)\leq \SV_a^b(F)$. 

To prove the superadditivity, given $c\in\ab$ and an arbitray division $D\in\mathcal D\ab$, consider  
$D_1=(D\cap[a,c])\cup\{c\}$ and $D_2=(D\cap[c,b])\cup\{c\}$. Clearly, $D_1$ and $D_2$ are divisions of 
$[a,c]$ and $[c,b]$, respectively. In addition,
\[
V(F,D,\ab)\leq V_a^b(F,D\cup\{c\},\ab)\leq V(F,D_1,[a,c])+V(F,D_2,[c,b]).
\]
Hence
\[
V(F,D,\ab)\leq \SV_a^c(F)+\SV_c^b(F)\quad\mbox{for every \ } D\in\mathcal D\ab,
\]
which leads to the inequality \eqref{add}.
\end{proof}

\smallskip

According to the previous theorem: {\it if $F\in SV(\ab,L(X,Y))$, then  
$F$ is of bounded semivariation on each closed subinterval of $\ab$}. 
As a consequence we have the following.

\smallskip

\begin{corollary}\label{mapping}
Let $F\in SV(\ab,L(X,Y))$ be given. Then
\begin{enumerate}
\item the mapping $t\in\ab\longmapsto \SV_a^t (F)$ is nondecreasing;
\item the mapping $t\in\ab\longmapsto \SV_t^b (F)$ is nonincreasing.
\end{enumerate}
\end{corollary}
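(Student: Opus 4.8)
The plan is to deduce both monotonicity statements directly from Theorem~\ref{basic}, which already records the two facts we need: the restriction inequality $\SV_c^d(F)\le\SV_a^b(F)$ on subintervals and the superadditivity \eqref{add}. Since $F\in SV(\ab,L(X,Y))$ implies, by the remark following Theorem~\ref{basic}, that $F$ is of bounded semivariation on every closed subinterval, both mappings $t\mapsto\SV_a^t(F)$ and $t\mapsto\SV_t^b(F)$ are well-defined real-valued functions on $\ab$ (with the convention $\SV_a^a(F)=\SV_b^b(F)=0$, the semivariation over a degenerate interval being zero).

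For part (1), I would take $a\le s\le t\le b$ and apply superadditivity \eqref{add} on the interval $[a,t]$ with the splitting point $s$, giving
\[
\SV_a^t(F)\le\SV_a^s(F)+\SV_s^t(F).
\]
Since the semivariation is nonnegative, this already yields $\SV_a^s(F)\le\SV_a^t(F)$, which is the desired monotonicity. (Strictly speaking \eqref{add} is stated for an interior splitting point, but the boundary cases $s=a$ and $s=t$ are trivial, so this covers all $s\le t$.)

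For part (2), the argument is symmetric: for $a\le s\le t\le b$, apply \eqref{add} on $[s,b]$ with splitting point $t$ to get $\SV_s^b(F)\le\SV_s^t(F)+\SV_t^b(F)$, and again nonnegativity of $\SV_s^t(F)$ gives $\SV_t^b(F)\le\SV_s^b(F)$, i.e.\ the map is nonincreasing.

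There is essentially no obstacle here; the whole content was already established in Theorem~\ref{basic}, and the corollary is a one-line consequence for each part. The only minor point worth stating explicitly is the convention that the semivariation over a degenerate interval is zero, so that the endpoints $t=a$ and $t=b$ are handled uniformly, and the observation that superadditivity in the form \eqref{add} immediately gives monotonicity once one discards a manifestly nonnegative term.
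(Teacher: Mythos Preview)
Your argument contains a genuine logical slip. From the inequality
\[
\SV_a^t(F)\le\SV_a^s(F)+\SV_s^t(F)
\]
together with $\SV_s^t(F)\ge 0$, you \emph{cannot} deduce $\SV_a^s(F)\le\SV_a^t(F)$; dropping a nonnegative term from the right-hand side of an inequality $A\le B+C$ gives $A-C\le B$, not $B\le A$. (Concretely, the displayed inequality is compatible with $\SV_a^s(F)$ being strictly larger than $\SV_a^t(F)$.) The same error appears in your argument for part~(2). The inequality \eqref{add} is in fact a subadditivity statement, despite the paper's wording, and subadditivity by itself does not force monotonicity.

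The fix is immediate and is indeed the content of Theorem~\ref{basic}, just not the part you used: apply the \emph{first} assertion there, namely $\SV_c^d(F)\le\SV_\alpha^\beta(F)$ whenever $[c,d]\subset[\alpha,\beta]$. For $a\le s\le t\le b$ one has $[a,s]\subset[a,t]$, hence $\SV_a^s(F)\le\SV_a^t(F)$, proving (1); and $[t,b]\subset[s,b]$ gives $\SV_t^b(F)\le\SV_s^b(F)$, proving (2). This is presumably what the paper intends when it states the corollary without proof as an immediate consequence of the preceding theorem.
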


\smallskip

Theorem \ref{basic} indicates that, unlike the variation, the semivariation need not be additive with respect to intervals. 
Next example shows that the inequality in \eqref{add} may be strict. 

\smallskip

\begin{example}\label{add-ex}
Let $F:[0,1]\to L(\R,\ell_2)$ be the function given on Example \ref{ex.1}, that is, for $t\in[0,1]$ and $x\in\R$,
\[
\big(F(t)\big)\,x=\left\{\begin{array}{cl}
\dis x\sum_{k=1}^n\,y_k&\mbox{if \ }t\in(\frac{1}{n+1},\frac{1}{n}], ~ n\in\N,
\\[4mm]
x\,S&\mbox{if \ }t=0
\end{array}
\right.
\]
where $y_k=\frac{1}{k}e_k\in \ell_2$ \footnote{For $k\in\N$, $e_k$ denotes an element of the canonical Schauder basis of $\ell_2$.}for $k\in\N$, and $S=\sum_{k=1}^\infty y_k$.

We will prove that
\begin{equation}\label{strict}
\SV_0^1(F)<\SV_0^{\frac{1}{2}}(F)+\SV_{\frac{1}{2}}^1(F).
\end{equation}

First, let us calculate $\SV_0^{\frac{1}{2}}(F)$.

Given a division $D=\{\alpha_0,\alpha_1,\dots,\alpha_{\nu(D)}\}$ 
of $[0,\frac{1}{2}]$, as in Example \ref{ex.1}, put
\[
n_j=\max\{k\in\N\,:\,k\alpha_j\leq 1\}\mbox{ \ for \ } j=1,\ldots,\nu(D),
\]
and $\Lambda=\{j\,:\,n_j< n_{j-1}\}\subset\{2,\ldots,\nu(D)\}$. For $x_j\in\R$, $j=1,\ldots,\nu(D)$ with $|x_j|\leq 1$ we have
\[
\Big\|\sum_{j=1}^{\nu(D)}[F(\alpha_j)-F(\alpha_{j-1})]\,x_j\Big\|_2=\Big\|\sum_{k=1}^\infty\lambda_k\,y_k\Big\|_2
	=\Big(\sum_{k=3}^\infty\Big|\frac{\lambda_k}{k}\Big|^2\Big)^{1/2}\leq\Big(\sum_{k=3}^\infty\frac{1}{k^2}\Big)^{1/2}.
\]
where $\lambda_k$ for $k\in\N$, $k\geq 3$, is given as in \eqref{coef} (note that   
the corresponding $n_j$ satisfies $n_j\geq 2$, $j=1,\ldots, \nu(D)$).
In view of this, it is clear that
\[
\SV_0^{\frac{1}{2}}(F)\leq\Big(\sum_{k=3}^\infty\frac{1}{k^2}\Big)^{1/2}=\sqrt{\frac{\pi^2}{6}-\frac{5}{4}}.
\] 
The equality $\SV_0^{\frac{1}{2}}(F)=\sqrt{\frac{\pi^2}{6}-\frac{5}{4}}$ is a consequence of the fact that 
\[
V(F,D_N,[0,\frac{1}{2}])=\Big(\sum_{k=3}^\infty\frac{1}{k^2}\Big)^{1/2}
\]  
for any division $D_N=\Big\{0,\frac{1}{N},\frac{1}{N-1},\dots,\frac{1}{2}\Big\}$ with $N\in\N$.

On the other hand, it is not hard to see that $\SV_{\frac{1}{2}}^1(F)=\frac{1}{2}$. Indeed, for any division 
$D=\{\alpha_0,\alpha_1,\dots,\alpha_{\nu(D)}\}$ of $[\frac{1}{2},1]$ and for  
any choice of $x_j\in\R$, $j=1,\ldots,\nu(D)$ with $|x_j|\leq 1$ we have
\[
\sum_{j=1}^{\nu(D)}[F(\alpha_j)-F(\alpha_{j-1})]\,x_j=[F(\alpha_1)-F(\frac{1}{2})]\,x_1=-x_1\,y_2=-\frac{x_1}{2}\,e_2
\]
Hence, $V(F,D,[\frac{1}{2},1])=\frac{1}{2}$.  

Recalling that $\SV_0^1(F)=\sqrt{\frac{\pi^2}{6}-1}$, we conclude that \eqref{strict} holds.
\end{example}

\smallskip

In the sequel we provide some further characterizations of the semivariation of a function. The first 
one, Theorem \ref{B*}, can be found for instance in \cite[Proposition~1.1]{Sch2} or \cite[Theorem~I.4.4]{H}. Basically, it connects 
the notions of semivariation and $\mathcal B^*$-variation, with respect to the bilinear triple 
$\mathcal B^*=(L(X,Y),L(X),L(X,Y))$ (for definition see \cite{H}).

\smallskip

\begin{theorem}\label{B*}
For $F:\ab\to L(X,Y)$ and $D\in\mathcal D\ab$ put
\[
   V^*(F,D)=
   	\sup\left\{\,\Big\|\sum_{j=1}^{\nu(D)} \big[F(\alpha_{j})-F(\alpha_{j-1})\big]\,G_j\Big\|_{L(X,Y)}\,:\,G_j\in L(X),\,\|G_j \|_{L(X)}\le 1\right\}.
\]
Then 
\[
\SV_a^b(F)=\sup\{ V^*(F,D)\,:\,D\in\mathcal D\ab\}.
\]
\end{theorem}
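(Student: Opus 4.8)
The plan is to prove the two inequalities $\SV_a^b(F) \le \sup\{V^*(F,D):D\in\mathcal D\ab\}$ and the reverse separately. One direction is essentially trivial: given any $x_j\in X$ with $\|x_j\|_X\le 1$ and any $y^*\in Y^*$ with $\|y^*\|\le 1$, one can try to manufacture rank-one operators, but more directly, observe that the supremum defining $V(F,D)$ uses vectors $x_j\in X$, whereas $V^*(F,D)$ uses operators $G_j\in L(X)$; since for a fixed vector $x\in X$ with $\|x\|_X\le 1$ and a fixed functional $\varphi\in X^*$ with $\|\varphi\|_{X^*}\le 1$ (which exists by Hahn--Banach to norm $x$, say), the operator $G := \varphi(\cdot)\,x \in L(X)$ has $\|G\|_{L(X)}\le 1$ and $G$ maps some unit vector essentially to $x$. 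That gives $V(F,D)\le V^*(F,D)$ after a short argument, hence $\SV_a^b(F)\le \sup_D V^*(F,D)$.

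For the reverse inequality $\sup_D V^*(F,D) \le \SV_a^b(F)$, the key step is: fix a division $D$, operators $G_j\in L(X)$ with $\|G_j\|_{L(X)}\le 1$, and estimate $\bigl\|\sum_j [F(\alpha_j)-F(\alpha_{j-1})]\,G_j\bigr\|_{L(X,Y)}$. By definition of the operator norm, this equals $\sup_{\|x\|_X\le 1}\bigl\|\sum_j [F(\alpha_j)-F(\alpha_{j-1})]\,(G_j x)\bigr\|_Y$. Now for each fixed $x$ with $\|x\|_X\le 1$, set $x_j := G_j x$; then $\|x_j\|_X\le \|G_j\|_{L(X)}\|x\|_X\le 1$, so $\bigl\|\sum_j [F(\alpha_j)-F(\alpha_{j-1})]\,x_j\bigr\|_Y \le V(F,D)\le \SV_a^b(F)$. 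Taking the supremum over such $x$ gives $\|\sum_j[F(\alpha_j)-F(\alpha_{j-1})]\,G_j\|_{L(X,Y)}\le \SV_a^b(F)$, whence $V^*(F,D)\le\SV_a^b(F)$, and then the supremum over $D$.

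Putting these together yields equality. I expect no serious obstacle here: both directions reduce to manipulating the operator norm definition $\|T\|_{L(X,Y)}=\sup_{\|x\|\le1}\|Tx\|_Y$ and the elementary fact that $\|G_j x\|_X\le 1$ when $\|G_j\|_{L(X)}\le1$ and $\|x\|_X\le1$. The only mildly delicate point is the first direction, where one needs a norm-one functional to build a rank-one operator realizing a given unit vector; this is exactly where Hahn--Banach enters, but it is a standard move. I would write the proof as follows.

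\begin{proof}
We show the two inequalities separately.

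First, let $D=\{\alpha_0,\dots,\alpha_{\nu(D)}\}\in\mathcal D\ab$ and let $G_j\in L(X)$ with $\|G_j\|_{L(X)}\le 1$ for $j=1,\dots,\nu(D)$. For any $x\in X$ with $\|x\|_X\le 1$, putting $x_j=G_j\,x$ we have $\|x_j\|_X\le\|G_j\|_{L(X)}\|x\|_X\le 1$, and therefore
\[
\Big\|\Big(\sum_{j=1}^{\nu(D)}[F(\alpha_j)-F(\alpha_{j-1})]\,G_j\Big)x\Big\|_Y
=\Big\|\sum_{j=1}^{\nu(D)}[F(\alpha_j)-F(\alpha_{j-1})]\,x_j\Big\|_Y\leq V(F,D)\leq \SV_a^b(F).
\]
Taking the supremum over all $x\in X$ with $\|x\|_X\le 1$ yields
\[
\Big\|\sum_{j=1}^{\nu(D)}[F(\alpha_j)-F(\alpha_{j-1})]\,G_j\Big\|_{L(X,Y)}\leq \SV_a^b(F),
\]
and hence $V^*(F,D)\le\SV_a^b(F)$. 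Since $D\in\mathcal D\ab$ was arbitrary, $\sup\{V^*(F,D):D\in\mathcal D\ab\}\le\SV_a^b(F)$.

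Conversely, let $D=\{\alpha_0,\dots,\alpha_{\nu(D)}\}\in\mathcal D\ab$ and let $x_j\in X$ with $\|x_j\|_X\le 1$ for $j=1,\dots,\nu(D)$. Fix $x_0\in X$ with $\|x_0\|_X=1$ and, by the Hahn--Banach theorem, a functional $\varphi\in X^*$ with $\|\varphi\|_{X^*}=1$ and $\varphi(x_0)=1$. For each $j$ define $G_j\in L(X)$ by $G_j\,z=\varphi(z)\,x_j$ for $z\in X$; then $\|G_j\|_{L(X)}\le\|\varphi\|_{X^*}\|x_j\|_X\le 1$ and $G_j\,x_0=x_j$. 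Consequently
\[
\Big\|\sum_{j=1}^{\nu(D)}[F(\alpha_j)-F(\alpha_{j-1})]\,x_j\Big\|_Y
=\Big\|\Big(\sum_{j=1}^{\nu(D)}[F(\alpha_j)-F(\alpha_{j-1})]\,G_j\Big)x_0\Big\|_Y
\leq \Big\|\sum_{j=1}^{\nu(D)}[F(\alpha_j)-F(\alpha_{j-1})]\,G_j\Big\|_{L(X,Y)}\leq V^*(F,D).
\]
Taking the supremum over all admissible $x_j$ gives $V(F,D)\le V^*(F,D)$, and then $\SV_a^b(F)\le\sup\{V^*(F,D):D\in\mathcal D\ab\}$.

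Combining the two inequalities, the proof is complete.
\end{proof}
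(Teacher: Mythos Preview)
Your proof is correct and follows essentially the same approach as the paper: both directions are obtained by showing $V(F,D)=V^*(F,D)$ for each division, using the operator norm definition for $V^*(F,D)\le V(F,D)$ and the rank-one operators $G_j z=\varphi(z)\,x_j$ (with $\varphi\in X^*$, $\|\varphi\|_{X^*}=1$, $\varphi(x_0)=1$ via Hahn--Banach) for $V(F,D)\le V^*(F,D)$.
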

\begin{proof}
It is enough to show that  
\[
V^*(F,D)=V(F,D)\quad \mbox{for every \ }D\in\mathcal D\ab.
\]

Let $D=\{\alpha_0,\alpha_1,\dots,\alpha_{\nu(D)}\}$ be a division of $\ab$. For  
$G_j\in L(X)$, $j=1,\ldots,\nu(D)$ with $\|G_j\|_{L(X)}\le 1$ we have
\[
\begin{split}
\Big\|\sum_{j=1}^{\nu(D)} \big[F(\alpha_{j})-F(\alpha_{j-1})\big]\,G_j\Big\|_{L(X,Y)}&=
\sup_{\|z\|_X\le 1}\Big\|\Big(\sum_{j=1}^{\nu(D)} \big[F(\alpha_{j})-F(\alpha_{j-1})\big]\,G_j\Big)\,z\Big\|_Y
\\&
=\sup_{\|z\|_X\le 1}\Big\|\sum_{j=1}^{\nu(D)} \big[F(\alpha_{j})-F(\alpha_{j-1})\big]\,(G_j\,z)\Big\|_Y
\\&
\le\sup_{\|y_j\|_X\le 1}\Big\|\sum_{j=1}^{\nu(D)} \big[F(\alpha_{j})-F(\alpha_{j-1})\big]\,y_j\Big\|_Y
\end{split}
\]
(where the last inequality is due to the fact that $\|G_j\,z\|_X\le 1$ provided $\|z\|_X\le 1$). Hence $V^*(F,D)\le V(F,D)$.

To obtain the reversed inequality, let us choose $w\in X$ and $\varphi\in X^*$ such that $\|w\|_X=1$, $\|\varphi\|_{X^*}=1$ and $\varphi(w)=1$ 
(which exists by the Hahn-Banach theorem, c.f. \cite[Theorem 2.7.4]{HP}). 
Given $x_j\in X$, $j=1,\ldots,\nu(D)$ with $\|x_j\|_X\le 1$ consider $G_j\in L(X)$ defined by
\[
G_j\,x=\varphi(x)\,x_j\quad\mbox{for \ } x\in X.
\]
Note that, $\|G_j\|_{L(X)}\le 1$ and $G_j\,w=x_j$ for $j=1,\ldots,\nu(D)$. Thus
\[
\begin{split}
\Big\|\sum_{j=1}^{\nu(D)} \big[F(\alpha_{j})-F(\alpha_{j-1})\big]\,x_j\Big\|_Y&=
\Big\|\sum_{j=1}^{\nu(D)} \big[F(\alpha_{j})-F(\alpha_{j-1})\big]\,(G_j\,w)\Big\|_Y
\\&
=\Big\|\Big(\sum_{j=1}^{\nu(D)} \big[F(\alpha_{j})-F(\alpha_{j-1})\big]\,G_j\Big)\,w\Big\|_Y
\\&
\le\Big\|\sum_{j=1}^{\nu(D)} \big[F(\alpha_{j})-F(\alpha_{j-1})\big]\,G_j\Big\|_{L(X,Y)}\|w\|_X
\end{split}
\]
which yields $V(F,D)\leq V^*(F,D)$ concluding the proof.
\end{proof}

\smallskip

\begin{remark}
In a more general formulation, $V^*(F,D)$ in Theorem \ref{B*} can be defined so that the supremum is taken over 
all possible choices of $G_j\in L(Z,X)$ with $\|G_j \|_{L(Z,X)}\le 1$, $j=1,\ldots,\nu(D)$; where $Z$ is an arbitrary Banach space.
\end{remark}

\smallskip

The following theorem, stated in \cite[3.6, Chapter I]{H}, will be useful for the investigation of continuity type results for semivariation. 
The characterization presented involves functions $(y^*\circ F):\ab\to X^*$, obtained by the composition of $F:\ab\to L(X,Y)$ and a 
functional $y^*\in Y^*$ which reads as follows
\begin{equation}\label{dual-circ}
(y^*\circ F)(t)(x)=y^*\big(F(t)\,x\big)\quad\mbox{for \ } t\in\ab, ~ x\in X.
\end{equation}

\smallskip

\begin{theorem}\label{SV-via-X*}
The semivariation of a function $F:\ab\to L(X,Y)$ is given by
\begin{equation}\label{eq.SV-X*}
   \SV_a^b(F)=\sup\big\{\,\var_a^b\,(y^*\circ F)\,:\,y^*\in Y^*,\,\|y^*\|_{Y^*}\le 1\big\}.
\end{equation}
Moreover, $F\in SV(\ab,L(X,Y))$ if and only if $(y^*\circ F)\in BV(\ab,X^*)$ for all $y^*\in Y^*$.
\end{theorem}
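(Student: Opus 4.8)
The plan is to prove the identity \eqref{eq.SV-X*} by a two-sided inequality, and then deduce the ``moreover'' part as an immediate corollary. The key observation is the standard dual characterization of the norm in a Banach space: for any $y\in Y$, $\|y\|_Y=\sup\{\,|y^*(y)|\,:\,y^*\in Y^*,\ \|y^*\|_{Y^*}\le 1\,\}$ (this is a consequence of the Hahn--Banach theorem, already invoked in the proof of Theorem \ref{B*}). Applied to $y=\sum_{j=1}^{\nu(D)}[F(\alpha_j)-F(\alpha_{j-1})]\,x_j$, this will let me move freely between the ``operator side'' and the ``functional side'' of the computation.

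For the inequality $\SV_a^b(F)\le\sup\{\var_a^b(y^*\circ F):\|y^*\|_{Y^*}\le 1\}$, I would fix a division $D=\{\alpha_0,\dots,\alpha_{\nu(D)}\}$ and points $x_j\in X$ with $\|x_j\|_X\le 1$. Using the dual norm formula, pick $y^*\in Y^*$ with $\|y^*\|_{Y^*}\le 1$ achieving (up to $\varepsilon$) the supremum defining $\big\|\sum_j[F(\alpha_j)-F(\alpha_{j-1})]x_j\big\|_Y$. Then
\[
\Big\|\sum_{j=1}^{\nu(D)}[F(\alpha_j)-F(\alpha_{j-1})]x_j\Big\|_Y
\le \Big|\sum_{j=1}^{\nu(D)} y^*\big([F(\alpha_j)-F(\alpha_{j-1})]x_j\big)\Big|+\varepsilon
\le \sum_{j=1}^{\nu(D)}\big|(y^*\circ F)(\alpha_j)(x_j)-(y^*\circ F)(\alpha_{j-1})(x_j)\big|+\varepsilon,
\]
and since $\|x_j\|_X\le 1$, each term is bounded by $\|(y^*\circ F)(\alpha_j)-(y^*\circ F)(\alpha_{j-1})\|_{X^*}$, so the sum is $\le \var_a^b(y^*\circ F)$. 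Taking suprema over $x_j$, then over $D$, and letting $\varepsilon\to 0$ gives the desired bound.

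For the reverse inequality, fix $y^*\in Y^*$ with $\|y^*\|_{Y^*}\le 1$ and a division $D$. For each $j$ choose $x_j\in X$, $\|x_j\|_X\le 1$, with $\big|(y^*\circ F)(\alpha_j)(x_j)-(y^*\circ F)(\alpha_{j-1})(x_j)\big|$ within $\varepsilon/\nu(D)$ of $\|(y^*\circ F)(\alpha_j)-(y^*\circ F)(\alpha_{j-1})\|_{X^*}$; after multiplying $x_j$ by a suitable sign (scalar of modulus one) we may assume the quantity $y^*\big([F(\alpha_j)-F(\alpha_{j-1})]x_j\big)$ is real and nonnegative. Then
\[
\sum_{j=1}^{\nu(D)}\|(y^*\circ F)(\alpha_j)-(y^*\circ F)(\alpha_{j-1})\|_{X^*}
\le \sum_{j=1}^{\nu(D)} y^*\big([F(\alpha_j)-F(\alpha_{j-1})]x_j\big)+\varepsilon
= y^*\Big(\sum_{j=1}^{\nu(D)}[F(\alpha_j)-F(\alpha_{j-1})]x_j\Big)+\varepsilon,
\]
which is $\le \big\|\sum_j[F(\alpha_j)-F(\alpha_{j-1})]x_j\big\|_Y+\varepsilon\le V(F,D)+\varepsilon\le\SV_a^b(F)+\varepsilon$. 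Summing over the division and taking suprema over $D$ gives $\var_a^b(y^*\circ F)\le\SV_a^b(F)$, hence the supremum over $y^*$ is $\le\SV_a^b(F)$.

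Combining the two inequalities yields \eqref{eq.SV-X*}. The ``moreover'' part follows at once: if $F\in SV(\ab,L(X,Y))$ then by \eqref{eq.SV-X*} $\var_a^b(y^*\circ F)\le\SV_a^b(F)<\infty$ for every $y^*\in Y^*$ (not just the unit ball, by homogeneity), so $y^*\circ F\in BV(\ab,X^*)$; conversely, if $y^*\circ F\in BV(\ab,X^*)$ for every $y^*$, then by \eqref{eq.SV-X*} we get $\SV_a^b(F)=\sup_{\|y^*\|\le 1}\var_a^b(y^*\circ F)$, and to conclude this is finite I would invoke the uniform boundedness principle: the family of linear maps $y^*\mapsto (y^*\circ F)(\alpha_j)-(y^*\circ F)(\alpha_{j-1})$ (equivalently, thinking of $g\mapsto \var_a^b g$ restricted appropriately) is pointwise bounded on the Banach space $Y^*$, hence uniformly bounded on its unit ball. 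The main obstacle I anticipate is exactly this last step — arguing finiteness of the supremum — since one must set up the Banach--Steinhaus argument carefully; a clean way is to note that for each fixed $D$ and each fixed choice of $x_j$'s the map $y^*\mapsto y^*\big(\sum_j[F(\alpha_j)-F(\alpha_{j-1})]x_j\big)$ is a bounded functional on $Y^*$, uniformly bounded in $(D,(x_j))$ by the pointwise BV hypothesis applied through the first inequality, so Banach--Steinhaus gives a single bound $M$ with $\SV_a^b(F)\le M$.
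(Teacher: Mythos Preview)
Your argument for the identity \eqref{eq.SV-X*} is essentially the same as the paper's: both directions proceed by fixing a division, passing between the $Y$-norm and functionals $y^*$ via Hahn--Banach, approximating the $X^*$-norm of each increment by a near-optimal $x_j$, and adjusting signs so the terms add constructively. The only cosmetic difference is that in the first inequality the paper keeps the supremum over $y^*$ throughout rather than picking an $\varepsilon$-optimal $y^*$, but this is immaterial.

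Where you go beyond the paper is in the ``moreover'' clause. The paper's written proof stops once the equality is established and does not spell out why $(y^*\circ F)\in BV(\ab,X^*)$ for \emph{every} $y^*$ forces the supremum over the unit ball to be finite. You correctly flag this as the non-trivial direction and supply the right tool: the family
\[
\Big\{\sum_{j=1}^{\nu(D)}[F(\alpha_j)-F(\alpha_{j-1})]\,x_j : D\in\mathcal D\ab,\ \|x_j\|_X\le 1\Big\}\subset Y
\]
is weakly bounded (since $|y^*(\cdot)|\le \var_a^b(y^*\circ F)<\infty$ for each fixed $y^*$), hence norm bounded by the uniform boundedness principle, giving $\SV_a^b(F)<\infty$. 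Your first phrasing of the Banach--Steinhaus step (in terms of the maps $y^*\mapsto (y^*\circ F)(\alpha_j)-(y^*\circ F)(\alpha_{j-1})$) is a bit muddled, but the ``clean way'' you describe at the end is exactly right. So your proposal is correct and, on this point, more complete than the paper's proof.
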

\begin{proof}
Let $D\in\mathcal D\ab$, $D=\{\alpha_0,\alpha_1,\dots,\alpha_{\nu(D)}\}$, be given. For $x_j\in X$, $j=1,\ldots,\nu(D)$ with $\|x_j\|_{X}\le 1$ we have
\begin{align*}
&\Big\|\sum_{j=1}^{\nu(D)}[F(\alpha_j)-F(\alpha_{j-1})]\,x_j\Big\|_Y
\\&\quad
=\sup\left\{\,\Big|y^*\Big(\sum_{j=1}^{\nu(D)}[F(\alpha_j)-F(\alpha_{j-1})]\,x_j\Big)\Big|\,:y^*\in Y^*,\,\|y^*\|_{Y^*}\le 1\,\right\}
\\&\quad
\leq\sup\left\{\,\sum_{j=1}^{\nu(D)}\big|[y^*\circ F(\alpha_j)-y^*\circ F(\alpha_{j-1})]\,x_j\big|\,:\,y^*\in Y^*,\,\|y^*\|_{Y^*}\le 1\,\right\}.
\end{align*}
Therefore,
\[
V(F,D)
\leq
\sup\left\{\,\sum_{j=1}^{\nu(D)}\|y^*\circ F(\alpha_j)-y^*\circ F(\alpha_{j-1})\|_{X^*}\,:\,y^*\in Y^*,\,\|y^*\|_{Y^*}\le 1\,\right\}
\]
for every $D\in\mathcal D\ab$, and consequently
\begin{equation}\label{eq1}
\SV_a^b(F)\leq\sup\big\{\,\var_a^b\,(y^*\circ F)\,:\,y^*\in Y^*,\,\|y^*\|_{Y^*}\le 1\big\}.
\end{equation}

On the other hand, given $y^*\in Y^*$ with $\|y^*\|_{Y^*}\leq 1$ and $\varepsilon> 0$, for $j=1,\ldots,\nu(D)$, there exists $x_j\in X$ 
with $\|x_j\|_X\leq 1$ such that
\[
\|y^*\circ F(\alpha_j)-y^*\circ F(\alpha_{j-1})\|_{X^*}-\frac{\varepsilon}{\nu(D)} 
	< \big|[y^*\circ F(\alpha_j)-y^*\circ F(\alpha_{j-1})]\,x_j\big|.
\]
If we put $\lambda_j:={\rm sgn}\,\big([y^*\circ F(\alpha_j)-y^*\circ F(\alpha_{j-1})]\,x_j\big)$ and $\tilde{x}_j=\lambda_j\,x_j$  
for $j=1,\ldots,\nu(D)$, then we obtain
\begin{align*}
&\sum_{j=1}^{\nu(D)}\|y^*\circ F(\alpha_j)-y^*\circ F(\alpha_{j-1})\|_{X^*}-\varepsilon
\\& \qquad\leq\sum_{j=1}^{\nu(D)}[y^*\circ F(\alpha_j)-y^*\circ F(\alpha_{j-1})]\,\tilde{x}_j
\\& \qquad\leq\Big|\sum_{j=1}^{\nu(D)}[y^*\circ F(\alpha_j)-y^*\circ F(\alpha_{j-1})]\,\tilde{x}_j\Big|
=\Big|y^*\Big(\sum_{j=1}^{\nu(D)}[F(\alpha_j)-F(\alpha_{j-1})]\,\tilde{x}_j\Big)\Big|
\\& \qquad\leq\|y^*\|_{Y^*}\,\Big\|\sum_{j=1}^{\nu(D)}[F(\alpha_j)-F(\alpha_{j-1})]\,\tilde{x}_j\Big\|_Y\leq\SV_a^b(F)
\end{align*}
Taking the surpremum over all $D\in\mathcal D\ab$, we get $\var_a^b(y^*\circ F)\leq \varepsilon+\SV_a^b(F).$ 
Since $\varepsilon>0$ is arbitrary, it follows that 
\[
\var_a^b(y^*\circ F)\leq \SV_a^b(F)\quad\mbox{for $y^*\in Y^*$ with $\|y^*\|_{Y^*}\leq 1$,}
\]
which, together with \eqref{eq1}, proves the result.
\end{proof}

\smallskip

The equality in \eqref{eq.SV-X*} is used, in a more general way, to define the notion of semivariation in the 
frame of functions with values in an arbitrary Banach space (cf. \cite{BGC}). 
More precisely, if $Z$ is a Banach space, the semivariation of 
$f:\ab\to Z$ is given by
\[
\sem_a^b(f)=\sup\big\{\,\var_a^b\,(z^*	\circ f)\,:\,z^*\in Z^*,\,\|z^*\|_{z^*}\le 1\big\}.
\]
where the functions $(z^*\circ f):\ab\to\R$ are defined as in \eqref{dual-circ} with an obvious adaptation.

Thereafter, for operator-valued functions two notions of semivariation can be derived. However, 
no direct connection between them is established since such connection would rely on a characterization of the dual space of $L(X,Y)$. 
On the other hand, as observed in \cite{BGC}, given a function $F:\ab\to L(X,Y)$, those two notions are related as follows: 
for each $x\in X$ the function $F_x: t\in\ab\mapsto F(t)x\in Y$ satisfies
\[
\sem_a^b(F_x)\leq \SV_a^b(F).
\]

\section{Semivariation and variation}
We have mentioned in Section 1 that every function of bounded variation is also of bounded semivariation, that is, 
\begin{equation}\label{BV-SV}
BV(\ab,L(X,Y))\subseteq SV(\ab,L(X,Y)).
\end{equation}
This section is devoted to the study of conditions ensuring the equality of these two sets. 
To start, we investigate the case when $Y$ is the real line.

\smallskip

\begin{theorem}\label{theo-X*}
Let $F:\ab\to X^*$ be given. Then, $F\in SV(\ab, X^*)$ if and only if $F\in BV(\ab, X^*)$. In this case, $\SV_a^b(F)=\var_a^b(F)$.
\end{theorem}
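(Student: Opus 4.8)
The plan is to prove the nontrivial inclusion $SV(\ab,X^*)\subseteq BV(\ab,X^*)$ together with the inequality $\var_a^b(F)\le\SV_a^b(F)$; the reverse inequality $\SV_a^b(F)\le\var_a^b(F)$ is already recorded in Section 1, and it automatically gives the inclusion $BV\subseteq SV$ and half of the norm identity. So everything reduces to showing that, for $F:\ab\to X^*$ and an arbitrary division $D=\{\alpha_0,\dots,\alpha_{\nu(D)}\}$, one has $\sum_{j=1}^{\nu(D)}\|F(\alpha_j)-F(\alpha_{j-1})\|_{X^*}\le V(F,D)$, whence taking the supremum over $D$ yields $\var_a^b(F)\le\SV_a^b(F)$.

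The key point is that here $Y=\R$, so in the definition of $V(F,D)$ the inner norm $\|\cdot\|_Y$ is just absolute value, and the terms $[F(\alpha_j)-F(\alpha_{j-1})]x_j$ are real numbers that can be made nonnegative by an appropriate sign choice. Concretely: fix $\varepsilon>0$. For each $j=1,\dots,\nu(D)$, since $F(\alpha_j)-F(\alpha_{j-1})\in X^*$, by definition of the operator norm there is $x_j\in X$ with $\|x_j\|_X\le1$ and $|[F(\alpha_j)-F(\alpha_{j-1})]x_j| > \|F(\alpha_j)-F(\alpha_{j-1})\|_{X^*}-\varepsilon/\nu(D)$. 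Replacing $x_j$ by $\operatorname{sgn}\big([F(\alpha_j)-F(\alpha_{j-1})]x_j\big)\,x_j$ (still of norm $\le1$) we may assume $[F(\alpha_j)-F(\alpha_{j-1})]x_j\ge0$. Then
\[
\sum_{j=1}^{\nu(D)}\|F(\alpha_j)-F(\alpha_{j-1})\|_{X^*}-\varepsilon
\le \sum_{j=1}^{\nu(D)}[F(\alpha_j)-F(\alpha_{j-1})]x_j
=\Big|\sum_{j=1}^{\nu(D)}[F(\alpha_j)-F(\alpha_{j-1})]x_j\Big|\le V(F,D),
\]
where the middle equality holds precisely because every summand is nonnegative. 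Letting $\varepsilon\to0$ gives $\sum_{j=1}^{\nu(D)}\|F(\alpha_j)-F(\alpha_{j-1})\|_{X^*}\le V(F,D)$.

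Taking the supremum over all $D\in\mathcal D\ab$ then gives $\var_a^b(F)\le\SV_a^b(F)$; combined with $\SV_a^b(F)\le\var_a^b(F)$ from Section 1 we get $\SV_a^b(F)=\var_a^b(F)$, and in particular $F\in SV(\ab,X^*)$ iff $F\in BV(\ab,X^*)$. There is no real obstacle here — the whole content is the observation that when the codomain is the scalar field, the interplay of norm and sum is trivialised because the ``decoupling'' of the $x_j$ (each chosen per subinterval) lets one align all the scalar contributions in one direction, turning the single norm of the sum into the sum of the norms. I would note in passing that this is just the special case $X=\R$ phenomenon seen through the weak-variation reformulation, or alternatively a one-line instance of Theorem \ref{SV-via-X*} applied with $Y=\R$ (the only unit functional on $\R$, up to sign, is the identity, so the supremum over $y^*$ collapses and $\SV_a^b(F)=\var_a^b(y^*\circ F)=\var_a^b(F)$) — but the direct division-by-division argument above is the cleanest self-contained route.
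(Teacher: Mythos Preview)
Your proof is correct and follows essentially the same approach as the paper: the paper's proof simply asserts the identity $V(F,D)=\sum_{j=1}^{\nu(D)}\|F(\alpha_j)-F(\alpha_{j-1})\|_{X^*}$ for every division $D$, referring to the argument of Theorem~\ref{SV-via-X*} for the details, and the $\varepsilon$-and-sign-adjustment trick you spell out is precisely the mechanism used there. Your closing remark that this is the case $Y=\R$ of Theorem~\ref{SV-via-X*} is likewise exactly how the paper frames it.
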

\begin{proof} 
is analogous to the proof of Theorem \ref{SV-via-X*}. In summary, it is a consequence 
of the fact that we can write
\begin{align*}
V(F,D)&=\sup\left\{\,\Big|\sum_{j=1}^{\nu(D)} \big[F(\alpha_{j})-F(\alpha_{j-1})\big]\,x_j\Big|\,:\,x_j\in X,\,\|x_j \|_X\le 1\right\}
\\&=\sum_{j=1}^{\nu(D)} \big\|F(\alpha_{j})-F(\alpha_{j-1})\big\|_{X*}
\end{align*}
for every division $D=\{\alpha_0,\alpha_1,\dots,\alpha_{\nu(D)}\}$ of $\ab$.
\end{proof}

\smallskip

\begin{remark}\label{finite-dim}
Given $n\in\N$ consider a function $F:\ab\to L(X,\R^n)$. Writing $F=(F_1,\ldots,F_n)$ with $F_j:\ab\to X^*$, $j=1,\ldots,n$, it is clear that
\[
\mbox{$F\in BV(\ab, L(X,\R^n))$ if and only if $F_j\in BV(\ab, X^*)$, $j=1,\ldots,n$}
\]
and similarly
\[
\mbox{$F\in SV(\ab, L(X,\R^n))$ if and only if $F_j\in SV(\ab, X^*)$, $j=1,\ldots,n$.}
\]
With this in mind, the assertion in Theorem \ref{theo-X*} can be extended to the case when $Y$ is an Euclidean space. 
More generally: {\it if $Y$ is a finite dimensional Banach space, we have 
\[
\mbox{$F\in SV(\ab, L(X,Y))$ if and only if $F\in BV(\ab, L(X,Y))$.}
\]}
\end{remark}

\smallskip

The following example presents a function of bounded semivariation whose variation is not finite.

\smallskip

\begin{example}\label{ex.3}
Let $F:[0,1]\to L(\R,\ell_2)$ be the function given on Example \ref{ex.1}, that is, for $t\in[0,1]$ and $x\in\R$,
\begin{equation}\label{ex-F}
\big(F(t)\big)\,x=\left\{\begin{array}{cl}
\dis x\sum_{k=1}^n\,y_k&\mbox{if \ }t\in(\frac{1}{n+1},\frac{1}{n}], ~ n\in\N,
\\[4mm]
x\,S&\mbox{if \ }t=0
\end{array}
\right.
\end{equation}
where $y_k=\frac{1}{k}e_k\in \ell_2$ \footnote{For $k\in\N$, $e_k$ denotes an element of the canonical Schauder basis of $\ell_2$.}

We know that $F\in SV([0,1],L(\R,\ell_2))$. On the other hand, since
\[
\|F(\textstyle\frac{1}{k})-F(\frac{1}{k+1})\|_{ L(\R,\ell_2)}=\|y_{k+1}\|_2=\dis\frac{1}{k+1} \mbox{ \ for every \ } k\in\N,
\]
we have
\[
\dis\sum_{k=1}^{N+1}\frac{1}{k}\leq
\sum_{k=1}^N\|\textstyle F(\frac{1}{k})-F(\frac{1}{k+1})\|_{ L(\R,\ell_2)}+\|F(\frac{1}{N+1})-F(0)\|_{L(\R,\ell_2)}\leq\var_0^1(F),
\]
for any choice of $N\in\N$. Therefore $\var_0^1(F)=\infty$.
\end{example}

\smallskip

The main tool for the construction of the function in the example above was the sequence $\{y_n\}_n$ in $\ell_2$ whose series converges but not absolutely. 
Recalling that for infinite dimensional Banach spaces we can always find a sequence with such property 
(due to Dvoretzky-Rogers Theorem \ref{Theo-DR} presented in the appendix), 
one can see that finite dimension is a necessary and sufficient condition for the equivalence between 
bounded variation and bounded semivariation. 
We remark that in \cite[Theorem 2]{Thorp} this equivalence was actually proved for functions defined on a ring of sets. 

Using the ideas from \cite{Thorp}, we will show that for infinite dimensional spaces $Y$ the inclusion in 
\eqref{BV-SV} is strict.

\smallskip   

\begin{theorem}\label{infinite-dim}
If the dimension of $Y$ is infinite, then there exists $F\in SV(\ab,L(X,Y))$ such thar $\var_a^b(F)=\infty$.
\end{theorem}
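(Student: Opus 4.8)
The plan is to mimic the construction in Example \ref{ex.1}, but carried out in an arbitrary infinite-dimensional Banach space $Y$. The key ingredient is the Dvoretzky--Rogers theorem (Theorem \ref{Theo-DR} in the appendix): since $\dim Y=\infty$, there exists a sequence $\{y_k\}_k$ in $Y$ for which the series $\sum_{k=1}^\infty y_k$ converges in $Y$ but $\sum_{k=1}^\infty\|y_k\|_Y=\infty$; by passing to a tail and rescaling I may also assume, say, $\|y_k\|_Y\le 1$ for all $k$. Denote by $S\in Y$ the sum of the series. Fix any nonzero $x_0\in X$ and a functional $\varphi\in X^*$ with $\varphi(x_0)=1$ (Hahn--Banach), so that $x\mapsto\varphi(x)\,v$ defines, for each $v\in Y$, a rank-one operator in $L(X,Y)$ of norm $\|\varphi\|_{X^*}\,\|v\|_Y$.

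Next I would define $F:\ab\to L(X,Y)$ by transporting the Example \ref{ex.1} construction from $[0,1]$ to $\ab$ via the affine bijection $t\mapsto (t-a)/(b-a)$: on the subinterval corresponding to $(\tfrac1{n+1},\tfrac1n]$ set $\bigl(F(t)\bigr)x=\varphi(x)\sum_{k=1}^n y_k$, and $\bigl(F(a)\bigr)x=\varphi(x)\,S$. (Affine reparametrisation changes neither $\SV$ nor $\var$, so it suffices to treat $[0,1]$.) To show $F\in SV(\ab,L(X,Y))$, I would repeat verbatim the computation in Example \ref{ex.1}: for a division $D$ the vector $\sum_j[F(\alpha_j)-F(\alpha_{j-1})]x_j$ equals $\varphi$-weighted telescoping sums that collapse to $-\sum_{k}\lambda_k y_k$ for coefficients $\lambda_k\in\{-\varphi(x_j)\}$ taking each index value at most once and bounded by $1$ in modulus; hence $\bigl\|\sum_j[F(\alpha_j)-F(\alpha_{j-1})]x_j\bigr\|_Y\le\sum_{k=1}^\infty\|y_k\|_Y\cdot$ --- wait, that diverges. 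So here the estimate must instead use convergence of $\sum y_k$: the relevant bound is that the partial-sum differences $\bigl\|\sum_{k=m}^{m'}y_k\bigr\|_Y$ are uniformly small, and a Cauchy-type argument over the (finitely many) telescoping blocks gives a uniform bound on $V(F,D)$. Concretely, since $\sum y_k$ converges there is $K_\varepsilon$ with $\bigl\|\sum_{k\in A}y_k\bigr\|_Y$ controlled for $A\subset\{K_\varepsilon,K_\varepsilon+1,\dots\}$ an interval of integers, and the first $K_\varepsilon$ terms contribute at most a fixed constant; assembling these yields $\SV_a^b(F)<\infty$.

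For the divergence of the variation I would argue exactly as in Example \ref{ex.3}: for consecutive points $\tfrac1{k+1}<\tfrac1k$ one has $F(\tfrac1k)-F(\tfrac1{k+1})$ equal (under the identification) to the rank-one operator $x\mapsto\varphi(x)\,y_{k+1}$, whose $L(X,Y)$-norm is $\|\varphi\|_{X^*}\|y_{k+1}\|_Y$. Choosing the division $\{0,\tfrac1N,\tfrac1{N-1},\dots,\tfrac12,1\}$ and summing gives $\var_0^1(F)\ge\|\varphi\|_{X^*}\sum_{k=2}^{N}\|y_k\|_Y$, which tends to $\infty$ with $N$ because $\sum\|y_k\|_Y=\infty$. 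Hence $\var_a^b(F)=\infty$.

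The only genuinely delicate point is the boundedness-of-semivariation estimate: one must be careful that the telescoping rearrangement in Example \ref{ex.1} still works when the blocks $\{n_j+1,\dots,n_{j-1}\}$ are consecutive integer intervals partitioning a tail of $\N$, and that the uniform bound is extracted from \emph{convergence} of $\sum y_k$ (via the boundedness of the set of all partial sums, or of block sums) rather than from absolute convergence --- in Example \ref{ex.1} the Hilbert-space structure made this automatic through the $\ell_2$ identity $\|\sum\lambda_k y_k\|_2^2=\sum|\lambda_k/k|^2$, but in a general Banach space one instead invokes that $\bigl\{\sum_{k=1}^m y_k:m\in\N\bigr\}$ is bounded, say by $C$, so each block sum has norm $\le 2C$ and, more carefully, a single telescoping identity $\sum_j\lambda_j(\sigma_{n_{j-1}}-\sigma_{n_j})$ with $\sigma_m=\sum_{k=1}^m y_k$ and $|\lambda_j|\le 1$ can be re-summed by parts to give a bound of the form $\mathrm{const}\cdot C$ independent of $D$. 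This is the step I expect to occupy most of the proof; everything else is a direct transcription of Examples \ref{ex.1} and \ref{ex.3}.
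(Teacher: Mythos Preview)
Your overall architecture matches the paper's: invoke Dvoretzky--Rogers, build $F$ from rank-one operators $x\mapsto\varphi(x)\,v$, and read off $\var_a^b(F)=\infty$ from the divergence of $\sum_k\|y_k\|_Y$. The divergence-of-variation half is fine and essentially identical to the paper's argument.

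The gap is exactly where you flagged it. Your proposed bound for $\SV_a^b(F)$ does not go through from \emph{mere} convergence of $\sum_k y_k$. Writing the relevant sum as $\sum_j \lambda_j(\sigma_{n_{j-1}}-\sigma_{n_j})$ with $|\lambda_j|\le 1$ and $\|\sigma_m\|\le C$, Abel summation gives
\[
\sum_{j}\lambda_j(\sigma_{n_{j-1}}-\sigma_{n_j})
=\lambda_N C_N-\sum_{j}(\lambda_{j+1}-\lambda_j)C_j,
\qquad C_m=\sigma_{n_0}-\sigma_{n_m},
\]
and since the $\lambda_j$ are \emph{arbitrary} numbers in $[-1,1]$ (one per block of the division), the sum $\sum_j|\lambda_{j+1}-\lambda_j|$ is only bounded by $2\,\nu(D)$, not by a constant independent of $D$. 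Concretely, with the division $D_N=\{0,\tfrac1N,\dots,\tfrac12,1\}$ each $y_k$ gets its own coefficient, and by choosing $x_j$ so that $\varphi(x_j)=\pm 1$ you realise $\sum_{k}\epsilon_k y_k$ for arbitrary signs $\epsilon_k$. If $\sum_k y_k$ is only conditionally convergent this is unbounded, so $F\notin SV$ and the construction fails.

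The fix is already in your hands: Dvoretzky--Rogers (Corollary~\ref{D-R}) actually produces a series that is \emph{unconditionally} convergent but not absolutely convergent. With unconditional convergence you can invoke Lemma~\ref{Thorp-lemma} (equivalently Theorem~\ref{T-1.3.2}): the set $\bigl\{\sum_{k}\lambda_k y_k:\ |\lambda_k|\le 1\bigr\}$ is bounded in $Y$, and that is precisely the uniform bound you need on $V(F,D)$. This is exactly how the paper closes the argument; no Abel trick is required once you use the right hypothesis.
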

\begin{proof}
By the Dvoretzky-Rogers Theorem \ref{Theo-DR} and its Corollary \ref{D-R} in the Appendix, 
there exists a sequence $\{y_n\}_n$ in $Y$ such that the series $\sum_{n=1}^\infty y_n$ is unconditionally convergent but not 
absolutely convergent. Considering an increasing sequence $\{t_n\}_n$ in $(a,b)$ converging to $b$ and fixing an arbitrary $\varphi\in X^*$, 
with $\|\varphi\|_{X^*}=1$, let
\[
F(t)\,x=\sum_{t_k<t}\varphi(x)\,y_k\quad\mbox{for \ } x\in X \mbox{ \ and \ }t\in\ab.
\]
Note that $F:\ab\to L(X,Y)$ is well-defined (see Theorem \ref{T-1.3.2} in the Appendix).

We claim that the variation of $F$ is not finite. Indeed, given $N\in\N$ consider the division 
$D_N=\{t_0,t_1,\ldots,t_{N+1}, b\}$ formed by elements of the sequence $\{t_n\}_n$ and $t_0=a$. Noting that
\[
\|y_k\|_Y=\|F(t_{k+1})-F(t_k)\|_{L(X,Y)}\quad\mbox{for every \ }k\in\N,
\]
we have
\[
\sum_{k=1}^{N}\|y_k\|_Y\leq\sum_{j=1}^{N+1}\|F(t_j)-F(t_{j-1})\|_{L(X,Y)}+\|F(b)-F(t_{N+1})\|_{L(X,Y)}\leq\var_a^b(F).
\]
Since $\sum_{n=1}^\infty y_n$ is not absolutely convergent, it follows that $\var_a^b(F)=\infty$.

\smallskip

Let us show that $F\in SV(\ab,L(X,Y))$. Consider a division $D=\{\alpha_0,\ldots,\alpha_{\nu(D)}\}$ 
of $\ab$ and let $x_j\in X$, $j=1,\ldots,\nu(D)$ with $\|x_j\|_X\leq 1$. Thus
\[
\Big\|\sum_{j=1}^{\nu(D)}[F(\alpha_j)-F(\alpha_{j-1})]\,x_j\Big\|_Y
=\Big\|\sum_{j=1}^{\nu(D)}\big(\varphi(x_j)\sum_{t_k{\in}[\alpha_{j-1},\alpha_j)}y_k\big)\Big\|_Y
=\Big\|\sum_{k=1}^\infty\beta_k y_k\Big\|_Y
\]
where $\beta_k= \varphi(x_j)$ if $t_k\in[\alpha_{j-1},\alpha_j)$, for some $j=1,\ldots,\nu(D)$, otherwise $\beta_k=0$. 
By Lemma \ref{Thorp-lemma} from the Appendix we know that the set
\[
\left\{\sum_{n=1}^\infty \lambda_n\,y_n\,:\,\lambda_n\in\R\mbox{ \ with \ }|\lambda_n|\leq 1,\,\,n\in\N\,\right\}
\]
is bounded in $Y$. Thus, $\Big\|\sum_{k=1}^\infty\beta_k y_k\Big\|_Y$ is bounded (uniformly with respect to the choice of $x_j\in X$,
 $j=1,\ldots,\nu(D)$) and, consequently, $\SV_a^b(F)<\infty$ which proves the result.
\end{proof}

\smallskip

According to Remark \ref{finite-dim} and Theorem \ref{infinite-dim} we conclude that the notion of 
semivariation is relevant only in spaces with infinite dimension.

\smallskip

\begin{corollary}
The following assertions are equivalent:
\begin{enumerate}[$(i)$]
\item the dimension of $Y$ is finite;
\item every function $F\in SV(\ab,L(X,Y))$ is of bounded variation on $\ab$.
\end{enumerate}
\end{corollary}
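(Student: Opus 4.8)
The plan is to prove the corollary by combining the two results already established in the excerpt. The statement is an equivalence between (i) $\dim Y < \infty$ and (ii) every $F \in SV(\ab, L(X,Y))$ is of bounded variation, so I would prove the two implications separately and cite the relevant earlier material for each.

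First I would establish $(i) \Rightarrow (ii)$. This is exactly the content of Remark \ref{finite-dim}: if $Y$ is finite dimensional, then $F \in SV(\ab, L(X,Y))$ if and only if $F \in BV(\ab, L(X,Y))$, and in particular every function of bounded semivariation has bounded variation. One could spell this out slightly: choose a basis and an isomorphism $Y \cong \R^n$, write $F = (F_1,\dots,F_n)$ with each $F_j : \ab \to X^*$, apply Theorem \ref{theo-X*} componentwise to conclude each $F_j \in BV(\ab, X^*)$, and then observe that finiteness of each component's variation forces finiteness of $\var_a^b(F)$ (up to the equivalence of norms on $\R^n$). But since Remark \ref{finite-dim} already records this, a one-line appeal suffices.

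For the contrapositive direction, I would prove $\neg(i) \Rightarrow \neg(ii)$, that is, if $\dim Y = \infty$ then there exists $F \in SV(\ab, L(X,Y))$ with $\var_a^b(F) = \infty$. This is precisely Theorem \ref{infinite-dim}, so the argument is simply a citation: the existence of such an $F$ witnesses the failure of (ii). Equivalently one phrases it as $(ii) \Rightarrow (i)$ by contraposition, which is how I would write it to match the logical form of the corollary.

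Since both implications reduce to results proved earlier in the paper, there is no real obstacle here — the corollary is a packaging of Remark \ref{finite-dim} and Theorem \ref{infinite-dim} into a single clean equivalence. The only thing to be careful about is making sure the quantifiers line up: (ii) as stated quantifies over \emph{all} $F$, so the failure of (ii) means the \emph{existence} of a bad $F$, which is exactly what Theorem \ref{infinite-dim} provides; and conversely Remark \ref{finite-dim} gives the universal statement needed for $(i) \Rightarrow (ii)$. A proof of perhaps three or four sentences is all that is required.

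\begin{proof}
$(i)\Rightarrow(ii)$: If $Y$ is finite dimensional, then by Remark \ref{finite-dim} we have $SV(\ab,L(X,Y))=BV(\ab,L(X,Y))$; in particular every $F\in SV(\ab,L(X,Y))$ is of bounded variation on $\ab$.

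$(ii)\Rightarrow(i)$: We argue by contraposition. Suppose the dimension of $Y$ is infinite. Then, by Theorem \ref{infinite-dim}, there exists $F\in SV(\ab,L(X,Y))$ with $\var_a^b(F)=\infty$, so that $F$ is not of bounded variation on $\ab$. Hence $(ii)$ fails.
\end{proof}
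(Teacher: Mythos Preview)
Your proof is correct and follows exactly the approach indicated in the paper, which states the corollary as an immediate consequence of Remark \ref{finite-dim} and Theorem \ref{infinite-dim} without giving a separate proof. Your two-line argument making the contraposition explicit is precisely what is intended.
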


\smallskip

Regarding the function $F$ in \eqref{ex-F}, it was shown on Example \ref{add-ex} that 
its semivariation is not additive with respect to intervals (see \eqref{strict}). 
It turns out that such additivity type property can be used to identify whether a function of 
bounded semivariation has a bounded variation as well. This is the content of the following theorem.

\smallskip
 
\begin{theorem}
Let $F\in SV(\ab,L(X,Y))$. Then $F\in BV(\ab,L(X,Y))$ if and only if 
\begin{equation}\label{bv-2}
M:=\sup\left\{\sum_{j=1}^{\nu(D)} \SV_{\alpha_{j-1}}^{\alpha_j}(F)\,:\,D\in{\mathcal D}\ab\right\}<\infty.
\end{equation}
Moreover, in this case, $\var_a^b(F)=M$.
\end{theorem}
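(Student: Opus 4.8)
The plan is to establish the two inequalities $\var_a^b(F)\le M$ and $M\le\var_a^b(F)$, which together give both the equivalence and the identity. The second inequality is immediate: for any division $D=\{\alpha_0,\dots,\alpha_{\nu(D)}\}$ and any choice of $x_j\in X$ with $\|x_j\|_X\le1$, the sum $\big\|[F(\alpha_j)-F(\alpha_{j-1})]x_j\big\|_Y$ is bounded by $\SV_{\alpha_{j-1}}^{\alpha_j}(F)$ (taking the trivial one-point division of $[\alpha_{j-1},\alpha_j]$), so $\sum_j\SV_{\alpha_{j-1}}^{\alpha_j}(F)\le\var_a^b(F)$ for every $D$, and hence $M\le\var_a^b(F)$; in particular $F\in BV(\ab,L(X,Y))$ forces $M<\infty$.

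For the reverse inequality, assume $M<\infty$ and fix a division $D=\{\alpha_0,\dots,\alpha_{\nu(D)}\}$ together with an index $j$. The key step is to show that $\|F(\alpha_j)-F(\alpha_{j-1})\|_{L(X,Y)}$ can be approximated from below by a \emph{sum} of semivariations over a refinement of the subinterval $[\alpha_{j-1},\alpha_j]$. Concretely, given $\varepsilon>0$, pick $x_j\in X$ with $\|x_j\|_X\le1$ realizing $\|F(\alpha_j)-F(\alpha_{j-1})\|_{L(X,Y)}<\big\|[F(\alpha_j)-F(\alpha_{j-1})]x_j\big\|_Y+\varepsilon$. But then the single-term estimate $\big\|[F(\alpha_j)-F(\alpha_{j-1})]x_j\big\|_Y\le\SV_{\alpha_{j-1}}^{\alpha_j}(F)$ already gives $\|F(\alpha_j)-F(\alpha_{j-1})\|_{L(X,Y)}\le\SV_{\alpha_{j-1}}^{\alpha_j}(F)+\varepsilon$, and summing over $j$ yields $\sum_{j=1}^{\nu(D)}\|F(\alpha_j)-F(\alpha_{j-1})\|_{L(X,Y)}\le\sum_{j=1}^{\nu(D)}\SV_{\alpha_{j-1}}^{\alpha_j}(F)+\nu(D)\,\varepsilon\le M+\nu(D)\,\varepsilon$. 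Since the $\nu(D)\,\varepsilon$ term depends on $D$, I would instead run the $\varepsilon$-argument with $\varepsilon/\nu(D)$ in place of $\varepsilon$ from the start, so that the bound becomes $\le M+\varepsilon$ uniformly in $D$; letting $\varepsilon\to0$ gives $\sum_j\|F(\alpha_j)-F(\alpha_{j-1})\|_{L(X,Y)}\le M$ for every $D$, hence $\var_a^b(F)\le M$.

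Combining the two inequalities shows that $F\in BV(\ab,L(X,Y))$ iff $M<\infty$, and in that case $\var_a^b(F)=M$. The argument is essentially routine once one notices that a single realizing vector $x_j$ per subinterval suffices; there is no genuine obstacle, the only point requiring a little care being the $D$-dependence of the error term, handled by distributing $\varepsilon$ across the $\nu(D)$ subintervals as described above. One may optionally remark that superadditivity (inequality \eqref{add} of Theorem \ref{basic}) shows the supremum defining $M$ is in fact attained in the limit along any refining sequence of divisions, but this is not needed for the proof.
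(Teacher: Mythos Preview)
Your proof is correct and follows essentially the same approach as the paper's: the $\varepsilon/\nu(D)$ trick for $\var_a^b(F)\le M$, and a direct comparison for $M\le\var_a^b(F)$. One remark: your written justification for $M\le\var_a^b(F)$ is garbled (the inequality $\big\|[F(\alpha_j)-F(\alpha_{j-1})]x_j\big\|_Y\le\SV_{\alpha_{j-1}}^{\alpha_j}(F)$ points the wrong way for the conclusion you draw); the paper simply uses $\SV_{\alpha_{j-1}}^{\alpha_j}(F)\le\var_{\alpha_{j-1}}^{\alpha_j}(F)$ together with the additivity $\sum_j\var_{\alpha_{j-1}}^{\alpha_j}(F)=\var_a^b(F)$.
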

\begin{proof} Assume \eqref{bv-2} holds. Given $\varepsilon>0$,  
for $D\in \mathcal D\ab$, with $D=\{\alpha_0,\alpha_1,\dots,\alpha_{\nu(D)}\}$, we can choose $x_j\in X$, $j=1,\ldots,\nu(D)$ with 
$\|x_j\|_X\leq 1$ such that
\[
\|F(\alpha_j)-F(\alpha_{j-1})\|_{L(X,Y)}-\frac{\varepsilon}{\nu(D)}<\|\,[F(\alpha_j)-F(\alpha_{j-1})]\,x_j\|_Y.
\]
Noting that, for $j=1,\ldots,\nu(D)$,
\[
\|\,[F(\alpha_j)-F(\alpha_{j-1})]\,x_j\|_Y\leq \SV_{\alpha_{j-1}}^{\alpha_j}(F),
\]
it follows that
\[
\sum_{j=1}^{\nu(D)}\|F(\alpha_j)-F(\alpha_{j-1})\|_{L(X,Y)}-\varepsilon<\sum_{j=1}^{\nu(D)} \SV_{\alpha_{j-1}}^{\alpha_j}(F)\leq M.
\]
Therefore, taking the supremum over all divisions $D\in\mathcal D\ab$ we obtain
\[
\var_a^b(F)<M+\varepsilon.
\]
Consequently $F\in BV(\ab,L(X,Y))$ and, since $\varepsilon>0$ is arbitrary, $\var_a^b(F)\leq M$. 

On the other hand, for any division $D=\{\alpha_0,\alpha_1,\dots,\alpha_{\nu(D)}\}$ of $\ab$ we have
\[
\sum_{j=1}^{\nu(D)} \SV_{\alpha_{j-1}}^{\alpha_j}(F)\leq \sum_{j=1}^{\nu(D)} \var_{\alpha_{j-1}}^{\alpha_j}(F) = \var_a^b(F),
\]
wherefrom we conclude that $\var_a^b(F)=M$.
\end{proof}

\section{Semivariation: limits and continuity}
It is well-known that a function of bounded variation is regulated, that is, 
the one-sided limits exist at every point of the domain (see \cite[Theorem~I.2.7]{H1} or \cite[Lemma~2.1]{FB}). In this section we investigate 
the connection between functions of bounded semivariation and regulated functions. 

\medskip

Following the notation in \cite{H}, if $f:\ab\to X$ is a regulated function $\ab$, 
we write $f\in G(\ab,X)$, and the one-sided limits are denoted by
\[
   f(t-)=\lim_{s\to t-}f(s)\quad\mbox{and}\quad f(t+)=\lim_{s\to t+}f(s)
\]
for $t\in \ab$ with the convention $f(a-)=f(a)$ and $f(b+)=f(b)$.

\medskip

Another useful notion through this section is the semivariation on half-closed intervals.

\smallskip

\begin{definition}
Given $F:\ab\to L(X,Y)$ and $c,d\in\ab$, $c<d$, the semivariation of $F$ on a half-closed interval $[c,d)$ is given by
\[
\SV_{[c,d)}(F)=\lim_{t\to d-}\SV_c^t(F)=\sup_{t\in[c,d)}\SV_c^t(F).
\]
In analogous way, we define the semivariation on the half-closed interval $(c,d]$ by
\[
\SV_{(c,d]}(F)=\lim_{t\to c+}\SV_t^d(F)=\sup_{t\in(c,d]}\SV_t^d(F).
\]
\end{definition}

\smallskip

Theorems \ref{basic} and \ref{mapping} guarantee that the semivariation over half-closed subintervals of $\ab$ 
is finite for every function from $SV(\ab,L(X,Y))$. 

In what follows we show that a function of bounded semivariation is regulated provided some conditions on 
the semivariation over half-closed intervals are satisfied.

\smallskip

\begin{theorem}\label{regulated}
Let $F\in SV(\ab,L(X,Y))$ be such that
\begin{subequations}
\begin{align}
\lim_{\delta\to 0+} \SV_{[t-\delta,t)}(F)&=0\quad\mbox{for every \ }t\in (a,b],\label{limit-1}
\\
\lim_{\delta\to 0+} \SV_{(t,t+\delta]}(F)&=0 \quad\mbox{for every \ }t\in [a,b).\label{limit-2}
\end{align}
\end{subequations}
Then $F$ is a regulated function on $\ab$.
\end{theorem}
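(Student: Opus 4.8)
The plan is to show that at each point the relevant one-sided limit of $F$ exists in $L(X,Y)$ by establishing a Cauchy criterion, and that the hypotheses \eqref{limit-1}--\eqref{limit-2} are exactly what turn the ``oscillation'' of $F$ near a point into something controlled by the semivariation over a short half-closed interval. First I would fix $t\in(a,b]$ and try to prove that $F(t-)$ exists. The key observation is that for any two points $s_1<s_2$ lying in $[t-\delta,t)$, the difference $F(s_2)-F(s_1)$, applied to a single vector $x$ with $\|x\|_X\le 1$, can be estimated by $\SV_{[t-\delta,t)}(F)$: indeed, taking the division $\{s_1,s_2\}$ of $[s_1,s_2]$ (or, more carefully, embedding it into a division of $[t-\delta,s]$ for $s$ slightly larger than $s_2$ and using Theorem \ref{basic} together with the definition of $\SV_{[t-\delta,t)}$), one gets
\[
\big\|[F(s_2)-F(s_1)]\,x\big\|_Y\le \SV_{s_1}^{s_2}(F)\le \SV_{t-\delta}^{s_2}(F)\le \SV_{[t-\delta,t)}(F).
\]
Taking the supremum over $\|x\|_X\le 1$ gives $\|F(s_2)-F(s_1)\|_{L(X,Y)}\le \SV_{[t-\delta,t)}(F)$ for all $s_1,s_2\in[t-\delta,t)$.

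With this uniform estimate in hand, the rest is a standard Cauchy argument. Given $\varepsilon>0$, by \eqref{limit-1} choose $\delta>0$ with $\SV_{[t-\delta,t)}(F)<\varepsilon$; then for all $s_1,s_2\in(t-\delta,t)$ we have $\|F(s_1)-F(s_2)\|_{L(X,Y)}<\varepsilon$. Since $L(X,Y)$ is complete (it is a Banach space, as stated in the notation), the net $\{F(s)\}_{s\to t-}$ is Cauchy and hence converges to some element of $L(X,Y)$; this limit is $F(t-)$. The argument for the right-hand limit $F(t+)$ at a point $t\in[a,b)$ is entirely symmetric, using \eqref{limit-2} and $\SV_{(t,t+\delta]}(F)$ in place of $\SV_{[t-\delta,t)}(F)$, together with the analogous monotonicity from Corollary \ref{mapping}(2). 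The conventions $f(a-)=f(a)$ and $f(b+)=f(b)$ handle the endpoints trivially, so $F$ has one-sided limits everywhere and is therefore regulated, i.e. $F\in G(\ab,L(X,Y))$.

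The main technical point to get right — and the only place where a little care is needed — is the very first estimate: one must be sure that $\SV_{s_1}^{s_2}(F)\le \SV_{[t-\delta,t)}(F)$ rather than merely $\le \SV_{t-\delta}^{t}(F)$. This follows because $s_1,s_2<t$, so choosing any $s'$ with $s_2\le s'<t$ one has $\SV_{s_1}^{s_2}(F)\le \SV_{t-\delta}^{s'}(F)$ by Theorem \ref{basic}, and the supremum of $\SV_{t-\delta}^{s'}(F)$ over $s'\in[t-\delta,t)$ is exactly $\SV_{[t-\delta,t)}(F)$ by definition. Everything else is routine: the monotonicity of the partial semivariation (Corollary \ref{mapping}) guarantees the quantities $\SV_{[t-\delta,t)}(F)$ decrease as $\delta\downarrow 0$, so hypotheses \eqref{limit-1}--\eqref{limit-2} make sense and directly feed the Cauchy criterion. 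I do not anticipate any real obstacle beyond bookkeeping with the half-closed-interval semivariation.
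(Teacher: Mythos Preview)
Your proposal is correct and follows essentially the same approach as the paper: both arguments use hypothesis \eqref{limit-1} to bound $\|[F(s_2)-F(s_1)]x\|_Y$ by $\SV_{t-\delta}^{s_2}(F)\le \SV_{[t-\delta,t)}(F)<\varepsilon$ and then invoke completeness of $L(X,Y)$ via a Cauchy criterion. The only cosmetic difference is that the paper phrases the Cauchy condition in terms of an increasing sequence $\{t_n\}$ converging to $t$, whereas you work directly with arbitrary points $s_1,s_2\in(t-\delta,t)$ and pass to the operator norm; your extra care in justifying $\SV_{s_1}^{s_2}(F)\le \SV_{[t-\delta,t)}(F)$ is exactly the step the paper takes for granted.
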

\begin{proof}
Given $t\in(a,b]$ we will prove that $F(t-)\in L(X,Y)$ exists. 
To this aim, consider an increasing sequence $\{t_n\}_n$ in $(a,t)$ converging to $t$.

Let $\varepsilon>0$ be given. By \eqref{limit-1} there exists $\delta>0$ such that
\[
\SV_{[t-\delta,t)}(F)<\varepsilon
\]
Moreover, there is $N\in\N$ so that $t_n>t-\delta$ for every $n\geq N$. Thus, for $m>n>N$ and $x\in X$ with $\|x\|_X\leq 1$ we obtain
\[
\|[F(t_m)-F(t_n)]x\|_Y\leq \SV_{t-\delta}^{t_m}(F)\leq \SV_{[t-\delta,t)}(F)<\varepsilon
\]
which implies that $F(t-)$ exists. Analogously, using \eqref{limit-2}, we can show the existence of $F(t+)$ for every $t\in[a,b)$.
\end{proof}

\smallskip

\begin{remark}
It is not hard to see that, replacing \eqref{limit-1} and \eqref{limit-2} by 
\[
\lim_{\delta\to 0+} \SV_{t-\delta}^t(F)=0\quad\mbox{and}\quad \lim_{\delta\to 0+} \SV_t^{t+\delta}(F)=0
\mbox{ \ for every \ }t\in\ab,
\]
it follows that $F$ is continuous on $\ab$.
\end{remark}

\smallskip

Next lemma is the analogue of \cite[Proposition 4.13]{H1} and provides a condition ensuring that \eqref{limit-1} and \eqref{limit-2} hold.

\smallskip

\begin{lemma}\label{WSC}
If $Y$ is a weakly sequentially complete Banach space, then \eqref{limit-1} and \eqref{limit-2} are satisfied for every $F\in SV(\ab,L(X,Y))$.
\end{lemma}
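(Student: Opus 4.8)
The plan is to reduce the problem to the scalar characterization of semivariation already at our disposal, namely Theorem~\ref{SV-via-X*}, which expresses $\SV_c^t(F)$ as the supremum of $\var_c^t(y^*\circ F)$ over $y^*\in Y^*$ with $\|y^*\|_{Y^*}\le 1$. With this, proving \eqref{limit-1} and \eqref{limit-2} amounts to showing a \emph{uniform} (over the unit ball of $Y^*$) smallness of the variation of $y^*\circ F$ on shrinking intervals. The key observation is that $y^*\circ F:\ab\to X^*$, so each of these composed functions is a scalar-type function of bounded variation, and $X^*$ is \emph{always} weakly sequentially complete is \emph{not} what we need---rather, we use that $Y$ being weakly sequentially complete prevents the family $\{y^*\circ F:\|y^*\|\le 1\}$ from having variations that fail to vanish uniformly.

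First I would argue by contradiction: suppose \eqref{limit-1} fails at some $t\in(a,b]$, so there is $\eta>0$ and a decreasing sequence $\delta_n\to 0+$ with $\SV_{[t-\delta_n,t)}(F)\ge \eta$. Using Theorem~\ref{SV-via-X*} and the definition of semivariation on half-closed intervals, I can then extract points $s_n\uparrow t$ and functionals $y_n^*\in Y^*$, $\|y_n^*\|_{Y^*}\le 1$, together with partitions of $[s_n,t)$ on which $\var(y_n^*\circ F)\ge \eta/2$, say; concretely this produces finitely many points and vectors $x\in X$, $\|x\|_X\le 1$, so that a certain telescoping sum of increments $[F(\beta_{i})-F(\beta_{i-1})]x$ has $y_n^*$-value bounded below by a fixed positive constant while all the relevant abscissae lie in a window of length $\delta_n$. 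The plan is to assemble these into a single sequence of increments $z_n=[F(\gamma_n)-F(\gamma_n')]x_n\in Y$ with $\gamma_n',\gamma_n$ both tending to $t$ from the left, such that $\|z_n\|_Y$ stays bounded away from $0$ but $\sum_n z_n$ (or an appropriate subsum) must converge weakly by a boundedness/Cauchy argument coming from $F\in SV(\ab,L(X,Y))$---and here weak sequential completeness of $Y$ forces a genuine weak limit, which contradicts $\|z_n\|_Y\not\to 0$ combined with the fact that the weak limit of such a telescoping-difference sequence over a converging sequence of abscissae is forced to be zero (since $F(\gamma_n)x_n$ and $F(\gamma_n')x_n$ differ by something whose partial sums are weakly Cauchy).

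The cleanest route, and the one I would actually write, is to mimic \cite[Proposition~4.13]{H1}: fix $t\in(a,b]$, take any increasing sequence $s_n\uparrow t$, and consider the sequence of partial sums $\sigma_n=\sum_{k=1}^{n}[F(s_{k+1})-F(s_k)]x_k$ in $Y$ for arbitrary $x_k\in X$ with $\|x_k\|_X\le 1$. Because $F\in SV(\ab,L(X,Y))$, the set of all such $\sigma_n$ (over all $n$ and all admissible choices of the $x_k$) is bounded by $\SV_a^b(F)$; moreover for each fixed choice the sequence $\{\sigma_n\}_n$ is weakly Cauchy---this I would verify by testing against $y^*\in Y^*$ and invoking that $y^*\circ F\in BV(\ab,X^*)$ (the "moreover" part of Theorem~\ref{SV-via-X*}), so $\sum_k [y^*\circ F(s_{k+1})-y^*\circ F(s_k)]x_k$ converges absolutely. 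Weak sequential completeness of $Y$ then yields a weak limit $\sigma\in Y$. From the boundedness of the whole family together with a uniform-boundedness/closed-graph style argument one extracts exactly the statement $\lim_{\delta\to0+}\SV_{[t-\delta,t)}(F)=0$; the symmetric argument with $(t,t+\delta]$ gives \eqref{limit-2}.

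\textbf{Main obstacle.} The delicate point is not the weak-Cauchy verification (that is routine from Theorem~\ref{SV-via-X*}) but passing from "each individual telescoping sum over $s_n\uparrow t$ is weakly convergent" to the \emph{uniform} conclusion that $\SV_{[t-\delta,t)}(F)\to 0$ as $\delta\to0+$; a priori the choice of test vectors $x_k$ and of the functional $y^*$ realizing the semivariation can depend on $\delta$, so one must rule out a "moving bump" escaping to the right endpoint. I expect to handle this by a diagonal/contradiction argument: assume $\SV_{[t-\delta,t)}(F)\ge\eta>0$ for all small $\delta$, manufacture from this a single fixed sequence of increments whose norms stay $\ge c>0$ yet whose partial sums are weakly Cauchy (hence weakly convergent in $Y$ by hypothesis), and observe that a weakly convergent series in $Y$ must have terms tending weakly to $0$ while the telescoping structure over $s_n\uparrow t$ simultaneously forces the norm-lower-bound to persist---the contradiction closing the proof.
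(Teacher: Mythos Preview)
Your overall strategy---a contradiction argument producing a series in $Y$ whose terms have norm bounded below while the series itself is forced to converge---is exactly the paper's. However, two steps in your write-up do not go through as stated.

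\medskip

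\emph{First gap: the lower bound on the terms.} In your ``cleanest route'' you take single increments $[F(s_{k+1})-F(s_k)]x_k$. Knowing only that $\SV_{s_k}^{s_{k+1}}(F)>M/2$ does \emph{not} give an $x_k$ with $\|[F(s_{k+1})-F(s_k)]x_k\|_Y$ bounded below; the semivariation of $F$ on $[s_k,s_{k+1}]$ may be realized only by a fine division of that subinterval, not by the endpoints alone. The paper handles this by choosing, for each $n$, a division $D_n$ of $[s_n,s_{n+1}]$ and vectors $x_j^{(n)}$ so that $y_n:=\sum_j[F(\alpha_j^{(n)})-F(\alpha_{j-1}^{(n)})]x_j^{(n)}$ satisfies $\|y_n\|_Y>M/2$. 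Your first paragraph gestures at this (``partitions of $[s_n,t)$''), but then you immediately revert to single increments $z_n=[F(\gamma_n)-F(\gamma_n')]x_n$, losing the lower bound.

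\medskip

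\emph{Second gap: weak convergence is not enough for the contradiction.} You invoke weak sequential completeness in its bare form: the partial sums are weakly Cauchy, hence weakly convergent, hence the terms tend to $0$ weakly. But $z_n\to 0$ weakly is perfectly compatible with $\|z_n\|_Y\ge c>0$ (think of an orthonormal sequence in a Hilbert space), so no contradiction arises. What the paper actually uses is stronger: one checks that $\sum_n|y^*(y_n)|<\infty$ for every $y^*\in Y^*$ (this follows from the additivity of $\var(y^*\circ F)$ over the disjoint intervals $[s_n,s_{n+1}]$, bounded by $\SV_a^b(F)$ via Theorem~\ref{SV-via-X*}), i.e.\ the series $\sum y_n$ is weakly unconditionally convergent. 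Then Theorem~\ref{wsc-series} in the Appendix says that in a weakly sequentially complete space such a series is unconditionally convergent \emph{in norm}, whence $\|y_n\|_Y\to 0$---and this does contradict $\|y_n\|_Y>M/2$. You already observed the absolute convergence of $\sum_k y^*\big([F(s_{k+1})-F(s_k)]x_k\big)$; you only need to cite the correct series theorem rather than the raw definition of weak sequential completeness.
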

\begin{proof}
By contradiction assume that there exists a function $F\in SV(\ab,L(X,Y))$ such that for some $t\in (a,b]$ we have 
$\lim_{\delta\to 0+} \SV_{[t-\delta,t)}(F)=M>0$. Hence, there is $\delta_1>0$ such that 
\[
\sup_{s\in [t-\delta,t)}\SV_{t-\delta}^s(F)=\SV_{[t-\delta,t)}(F)>\frac{M}{2}\quad \mbox{for \ }0<\delta\leq \delta_1 .
\]
Put $s_1=t-\delta_1$. In view of the inequality above, there exists $s_2\in(s_1,t)$ so that
\[
\SV_{s_1}^{s_2}(F)>\frac{M}{2}.
\]
Moreover, $\SV_{[s_2,t)}(F)>\frac{M}{2}$. Thus, we can choose $s_3\in(s_2,t)$ with 
\[
\SV_{s_2}^{s_3}(F)>\frac{M}{2}\quad\mbox{and}\quad \SV_{[s_3,t)}(F)>\frac{M}{2}.
\]
If we proceed in this way, we obtain an increasing sequence $\{s_n\}_n$ in $(a,t)$ such that
\[
\lim_{n\to\infty}s_n=t\quad\mbox{and}\quad\SV_{s_n}^{s_{n+1}}(F)>\frac{M}{2},\quad n\in\N.
\]
Having this in mind, for each $n\in\N$, we can find a division 
$D_n=\{\alpha_0^{(n)},\alpha_1^{(n)},\ldots,\alpha_{\nu_n}^{(n)}\}$ of $[s_n,s_{n+1}]$ and $x_j^{(n)}\in X$, $j=1,\dots,\nu_n$  
with $\|x_j^{(n)}\|_X\leq 1$ such that
\[
\Big\|\sum_{j=1}^{\nu_n}[F(\alpha_j^{(n)})-F(\alpha_{j-1}^{(n)})]x_j^{(n)}\Big\|_Y>\frac{M}{2}
\]

Let
\[
y_n=\sum_{j=1}^{\nu_n}[F(\alpha_j^{(n)})-F(\alpha_{j-1}^{(n)})]x_j^{(n)}\quad\mbox{for \ }n\in\N.
\]
We claim that $\sum_{n=1}^\infty|y^*(y_n)|<\infty$ for every $y^*\in Y^*$ with $\|y^*\|_{Y^*}\leq 1$. Indeed, given $N\in\N$, we have
\begin{align*}
\sum_{n=1}^N|y^*(y_n)|&=\sum_{n=1}^N\Big|\sum_{j=1}^{\nu_n}y^*\Big([F(\alpha_j^{(n)})-F(\alpha_{j-1}^{(n)})]x_j^{(n)}\Big)\Big|
	\\&
		\leq\sum_{n=1}^N\sum_{j=1}^{\nu_n}\|y^*\circ F(\alpha_j^{(n)})-y^*\circ F(\alpha_{j-1}^{(n)})\|_{X^*}
	\\&\leq	\sum_{n=1}^N\var_{s_n}^{s_{n+1}}(y^*\circ F)=\var_{s_1}^{s_{N+1}}(y^*\circ F).
\end{align*}
which together with Theorem \ref{SV-via-X*} leads to
\[
\sum_{n=1}^N|y^*(y_n)|\leq \SV_{s_1}^{s_{N+1}}(F)\leq \SV_{[s_1,t)}(F)<\infty.
\]
Thus, we conclude that the series $\sum_{n=1}^\infty y_n$ is weakly (unconditionally) convergent. 
Since $Y$ is weakly sequentially complete, it follows that  $\sum_{n=1}^\infty y_n$ converges in $Y$ 
(see Theorem \ref{wsc-series} in the Appendix). This contradicts   
the fact that $\|y_n\|_Y>\frac{M}{2}>0$ for every $n\in\N$.

In summary, we conclude that \eqref{limit-1} holds for every function from $SV(\ab,L(X,Y))$. Analogously we can show that \eqref{limit-2} is also true. 
\end{proof}

\smallskip

The results above, together with \cite[Corollary I.3.2]{H}, lead to the following conclusion about the continuity of a function of bounded semivariation.

\smallskip

\begin{corollary}\label{cont-a.e.}
Let $F\in SV(\ab,L(X,Y))$. If $Y$ is a weakly sequentially complete Banach space, then $F$ is continuous on $\ab$ except for a countable set.
\end{corollary}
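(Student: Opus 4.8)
}
The plan is to combine Lemma \ref{WSC}, Theorem \ref{regulated}, and the cited result \cite[Corollary I.3.2]{H} which asserts that a regulated function on a compact interval has at most countably many points of discontinuity. First I would invoke Lemma \ref{WSC}: since $Y$ is weakly sequentially complete, every $F\in SV(\ab,L(X,Y))$ satisfies the two boundary conditions \eqref{limit-1} and \eqref{limit-2} on the semivariation over half-closed intervals. With these conditions in hand, Theorem \ref{regulated} applies verbatim and yields that $F$ is a regulated function on $\ab$, i.e. $F\in G(\ab,L(X,Y))$, so both one-sided limits $F(t-)$ and $F(t+)$ exist (in $L(X,Y)$, hence in the operator norm) at every $t\in\ab$.

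The final step is then purely a fact about regulated functions: a regulated function with values in a Banach space (here $L(X,Y)$) can fail to be continuous only at points where $F(t-)$, $F(t)$, or $F(t+)$ disagree, and the set of such points is countable. This is exactly \cite[Corollary I.3.2]{H}, so I would cite it directly rather than reprove it. If one wanted a self-contained argument, the standard route is: for each $t$ set $\omega(t)=\max\{\|F(t)-F(t-)\|_{L(X,Y)},\|F(t+)-F(t)\|_{L(X,Y)}\}$, observe that for every $n\in\N$ the set $\{t\in\ab:\omega(t)>1/n\}$ is finite (otherwise a limit point of that set would violate the existence of a one-sided limit there), and take the union over $n$; but invoking the cited corollary keeps the proof short and in the spirit of this survey.

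I do not expect a genuine obstacle here: the corollary is essentially a bookkeeping consequence of the two preceding results. The only point requiring a line of care is noticing that Theorem \ref{regulated} produces a regulated function in the Banach space $L(X,Y)$ with the operator norm — which is precisely what the norm-topology statement of \cite[Corollary I.3.2]{H} needs — so that "continuous on $\ab$ except for a countable set" is meant with respect to the operator-norm topology on $L(X,Y)$. With that observation the proof is just: Lemma \ref{WSC} $\Rightarrow$ hypotheses of Theorem \ref{regulated} $\Rightarrow$ $F\in G(\ab,L(X,Y))$ $\Rightarrow$ \cite[Corollary I.3.2]{H} gives countably many discontinuities.
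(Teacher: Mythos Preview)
Your proposal is correct and matches the paper's own reasoning exactly: the paper states that the corollary follows from ``the results above'' (i.e., Lemma~\ref{WSC} and Theorem~\ref{regulated}) together with \cite[Corollary~I.3.2]{H}, which is precisely the chain Lemma~\ref{WSC} $\Rightarrow$ hypotheses of Theorem~\ref{regulated} $\Rightarrow$ $F\in G(\ab,L(X,Y))$ $\Rightarrow$ countably many discontinuities. Your additional remark that the conclusion is with respect to the operator-norm topology on $L(X,Y)$ is a helpful clarification but not something the paper spells out.
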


\smallskip

\begin{remark}
Recalling that reflexive spaces are weakly sequentially complete (see \cite[Theorem 2.10.3]{HP}), 
Lemma \ref{WSC}, as well as Corollary \ref{cont-a.e.},  remains valid for $Y$ reflexive. 
\end{remark}

\smallskip

According to the characterization given in Theorem \ref{SV-via-X*}, for $F\in SV(\ab,L(X,Y))$ and $y^*\in Y^*$, 
the function $y^*\circ F:\ab\to X^*$ is of bounded variation on $\ab$. This implies that for each $t\in\ab$ both limits
\[
\lim_{\delta\to 0+}y^*\circ F(t-\delta)\mbox{ \ and \ }\lim_{\delta\to 0+}y^*\circ F(t+\delta)
\]
exist in $X^*$. Such limits can described by means of an operator mapping $X$ into the second dual $Y^{**}$ of $Y$.  
Now, we need to fix some notation to make our statement more precise.

\medskip

Given $U\in L(X,Y^{**})$ and $y^*\in Y^*$, we can define a linear functional $y^*\bullet U:X\to\R$ by setting 
$(y^*\bullet U)(x)=\big(U(x)\big)(y^*)$ for every $x\in X$.

\smallskip

\begin{theorem}
Let $F\in SV(\ab,L(X,Y)$. Then, for each $t\in(a,b]$ and $s\in[a,b)$, there exist $F(t-),\,F(s+)\in L(X,Y^{**})$ such that, for every $y^*\in Y^*$,
\[
\lim_{\delta\to 0+}y^*\circ F(t-\delta)=y^*\bullet F(t-)
\quad\mbox{and}\quad
\lim_{\delta\to 0+}y^*\circ F(s+\delta)=y^*\bullet F(s+)
\]
where $y^*\circ F$ is as in \eqref{dual-circ}.
\end{theorem}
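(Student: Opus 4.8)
The plan is to construct $F(t-)$ pointwise in $X$ and then verify it is a bounded linear operator into $Y^{**}$. First I would fix $t\in(a,b]$ and $x\in X$. Since $F\in SV(\ab,L(X,Y))$, Theorem \ref{SV-via-X*} tells us that $y^*\circ F\in BV(\ab,X^*)$ for every $y^*\in Y^*$, hence each $y^*\circ F$ is regulated and the limit $\lim_{\delta\to0+}(y^*\circ F)(t-\delta)$ exists in $X^*$; in particular the scalar limit $\ell_{t,x}(y^*):=\lim_{\delta\to0+}y^*\big(F(t-\delta)x\big)$ exists for every $y^*\in Y^*$. The key observation is that $\{F(t-\delta)x\}_{\delta>0}$ is a bounded net in $Y$: indeed $\|F(s)x\|_Y\le\|F(a)\|_{L(X,Y)}+\SV_a^b(F)$ for all $s$, by the estimate established right after the norm $\|\cdot\|_{SV}$ was introduced. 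Therefore the net $\{F(t-\delta)x\}$, viewed inside $Y^{**}$ under the canonical embedding $J:Y\hookrightarrow Y^{**}$, is bounded and converges in the weak-$*$ topology $\sigma(Y^{**},Y^*)$ to some $\Phi_{t,x}\in Y^{**}$ with $(\Phi_{t,x})(y^*)=\ell_{t,x}(y^*)$ for all $y^*\in Y^*$; one should note here that weak-$*$ convergence of the net need not be deduced from weak-$*$ compactness alone, but rather directly: the scalar limits exist for every $y^*$, and the uniform bound guarantees the resulting functional $y^*\mapsto\ell_{t,x}(y^*)$ is bounded on $Y^*$, hence lies in $Y^{**}$.

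Next I would define $F(t-)x:=\Phi_{t,x}$ and check linearity in $x$. For fixed $y^*\in Y^*$, the map $x\mapsto(F(t-)x)(y^*)=\lim_{\delta\to0+}y^*(F(t-\delta)x)=\lim_{\delta\to0+}(y^*\circ F(t-\delta))(x)$ is linear, being a pointwise limit of the linear functionals $y^*\circ F(t-\delta)\in X^*$; since this holds for every $y^*\in Y^*$ and $Y^*$ separates points of $Y^{**}$, the map $x\mapsto F(t-)x$ is linear. For boundedness: $\|F(t-)x\|_{Y^{**}}=\sup_{\|y^*\|_{Y^*}\le1}|(F(t-)x)(y^*)|\le\sup_{\|y^*\|_{Y^*}\le1}\limsup_{\delta\to0+}|y^*(F(t-\delta)x)|\le(\|F(a)\|_{L(X,Y)}+\SV_a^b(F))\,\|x\|_X$, so $F(t-)\in L(X,Y^{**})$ with norm at most $\|F\|_{SV}$. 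By the definition of $y^*\bullet U$ given just before the statement, $(y^*\bullet F(t-))(x)=(F(t-)x)(y^*)=\lim_{\delta\to0+}y^*(F(t-\delta)x)=\lim_{\delta\to0+}(y^*\circ F(t-\delta))(x)$, which is exactly the assertion $\lim_{\delta\to0+}y^*\circ F(t-\delta)=y^*\bullet F(t-)$, the limit being taken in $X^*$ (the convergence is even in norm in $X^*$, since $y^*\circ F\in BV(\ab,X^*)$ is regulated). The construction of $F(s+)$ for $s\in[a,b)$ is entirely symmetric, using right-sided limits.

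The main obstacle is purely conceptual rather than computational: one must resist the temptation to claim $\lim_{\delta\to0+}F(t-\delta)$ exists in $L(X,Y)$ — it need not, precisely because $Y$ may fail to be weakly sequentially complete, which is exactly why the target space is enlarged to $Y^{**}$. The care needed is in noting that each slice $y^*\circ F$ being regulated in $X^*$ gives, for every $x$, the existence of the \emph{scalar} limits $y^*(F(t-\delta)x)$, and that these scalar limits assemble — thanks to the uniform bound on $\|F(s)x\|_Y$ — into a genuine element of $Y^{**}$ depending linearly and boundedly on $x$. No use of reflexivity or weak sequential completeness of $Y$ is required; the bidual absorbs the defect. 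The only genuinely quantitative input is the a priori bound $\|F(s)\|_{L(X,Y)}\le\|F\|_{SV}$, which has already been recorded in the text.
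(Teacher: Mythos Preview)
Your proof is correct and follows essentially the same route as the paper: use Theorem~\ref{SV-via-X*} to get that each $y^*\circ F$ is regulated in $X^*$, extract the scalar limits, and assemble them into an operator in $L(X,Y^{**})$ via the uniform bound $\|F(s)\|_{L(X,Y)}\le\|F\|_{SV}$. The only difference is organizational: the paper first builds the bounded linear map $T:Y^*\to X^*$, $y^*\mapsto\lim_{\delta\to0+}y^*\circ F(t-\delta)$, and then transposes it to obtain $T^\times:X\to Y^{**}$, whereas you fix $x$ first and build $F(t-)x\in Y^{**}$ directly --- but since $(T^\times x)(y^*)=(Ty^*)(x)$, the two constructions are the same object viewed from opposite sides.
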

\begin{proof}
Without loss of generality, let us assume $F(a)=0$. Given $t\in(a,b]$, for each $y^*\in Y^*$ there exists $T_{y^*}\in X^*$ such that
\[
\lim_{\delta\to 0+}y^*\circ F(t-\delta)=T_{y^*}.
\]
Considering $T:Y^*\to X^*$ defined by $T(y^*)=T_{y^*}$, $y^*\in Y^*$, clearly $T$ is linear. Moreover, by Theorem \ref{SV-via-X*}, 
\[
\|y^*\circ F(t-\delta)\|_{X^*}\leq \|y^*\|_{Y^*}\SV_a^b(F)\mbox{ \ for every \ }y^*\in Y^*,
\]
hence $T\in L(Y^*,X^*)$ with $\|T\|_{L(Y^*,X^*)}\leq \SV_a^b(F)$.

Let $T^\times:X\to Y^{**}$ be the mapping which associates to each $x\in X$ the linear functional $x^\times:Y^*\to\R$ given by 
$x^\times(y^*)=T_{y^*}x$ for $y^*\in Y^*$. Note that, for every $x\in X$ and $y^*\in Y^*$, we have
\[
\lim_{\delta\to 0+}(y^*\circ F(t-\delta))x=T_{y^*}x=x^\times(y^*)=(y^*\bullet T^\times)(x).
\]
Therefore $F(t-)=T^\times\in L(X,Y^{**})$ is the desired operator. Similarly, we can construct $F(s+)\in L(X,Y^{**})$ for $s\in[a,b)$.
\end{proof}

\smallskip

The theorem above suggests that functions of bounded semivariation are regulated in some weak sense. 
For operator-valued functions a more general notion of regulated function can be defined.

\smallskip

\begin{definition}\label{s-reg}
Given $F:\ab\to L(X,Y)$, we say $F$ is simply regulated on $\ab$ if, for each $x\in X$, the function 
$t\in\ab \longmapsto F(t)\,x\in Y$ is regulated. We will denote the set of such functions by $SG(\ab, L(X,Y))$.
\end{definition}

\smallskip

From the Banach-Steinhaus Theorem (c.f. \cite[Theorem 2.11.4]{HP}), given a function $F\in SG(\ab, L(X,Y))$, for each $t\in(a,b]$   
there exists $F(t\dot{-})\in L(X,Y)$ such that 
\begin{equation*}
\lim_{s\to t-} F(s)x=F(t\dot{-})x\mbox{ \ for every \ }x\in X.
\end{equation*}
Analogously, for $t\in[a,b)$, we have $F(t\dot{+})\in L(X,Y)$ satisfying $\lim\limits_{s\to t+} F(s)x=F(t\dot{+})x$ for every $x\in X$.

\medskip

The concept of simply regulated function appears in the literature under different nomeclatures (see \cite{H} and \cite{Sch1}), 
for instance, weakly regulated or $(\mathcal B)$-regulated with respect to the bilinear triple $\mathcal B=(L(X,Y),X,Y)$. Our choice  
follows the work of Honig in \cite{H-f}, among other of his publications and followers (see also \cite{Ba}). 
In some sense, such terminology could be seen as reference to 
the notion of regulated function in the weak* topology - also known as simple topology. 

\medskip

By the Definition \ref{s-reg}, it is clear 
\[
G(\ab, L(X,Y))\subset SG(\ab, L(X,Y))
\] (for details, see \cite[Proposition~3]{Sch1})

\smallskip

Recalling that $BV(\ab, X)\subset G(\ab, X)$, we could expect that a similar relation would hold in the frame of functions of bounded semivarition 
relatively to the notion of simply regulated functions defined above. The following example, inspired by \cite{BGC}, shows that this is not the case.

\smallskip

\begin{example}
Let $\ell_\infty$ be the Banach space of bounded sequences $x=\{x_n\}_n$ in $\R$, endowed with the usual supremum norm
\[
    \|x\|_{\infty}=\sup\{\,|x_n|:n\in\N\}.
\]

Denote by $e_k$, $k\in\N$, the canonical basis of $\ell_\infty$, where $e_k$ is the sequence which is $1$ in the $k$-th coordinate and null elsewhere. 

Consider the function $F:[0,1]\to L(\ell_\infty)$ given by
\[
\big(F(t)\big)\,x=\left\{\begin{array}{cl}
\dis x_1\,e_n&\mbox{if \ }t\in(\frac{1}{n+1},\frac{1}{n}], ~ n\in\N,
\\[4mm]
0&\mbox{if \ }t=0
\end{array}
\right.
\]
for $t\in[0,1]$ and $x=\{x_n\}_n\in\ell_\infty$.

Note that, for every $k\in\N$,
\[
\|\textstyle[F(\frac{1}{k})-F(\frac{1}{k+1})]\,e_1\|_\infty=\|e_k-e_{k+1}\|_\infty=1.
\]
Hence $\lim_{k\to\infty}\textstyle\big(F(\frac{1}{k})\big)e_1$ does not exist and, consequently, neither do $F(0+)$. 
This shows that $F$ is not simply regulated.

\smallskip

Let us prove that $F\in SV([0,1], L(\ell_\infty))$. Given a division $D=\{\alpha_0,\alpha_1,\ldots,\alpha_{\nu(D)}\}$ 
of $[0,1]$, let $k_j=\max\{k\in\N\,;\,k\alpha_j\leq 1\}$ for $j=1,\ldots,\nu(D)$. Considering $x_j\in\ell_\infty$, 
$x_j=\{x_n^{(j)}\}_n$, $j=1,\ldots,\nu(D)$ with $\|x_j\|_\infty\leq 1$, we have
\[
\sum_{j=1}^{\nu(D)}[F(\alpha_j)-F(\alpha_{j-1})]x_j
	=x^{(1)}_1\,e_{k_1}+\sum_{j=2}^{\nu(D)}\big[x^{(j)}_1\,e_{k_j}-x^{(j)}_1\,e_{k_{j-1}}\big]
\]
Taking $\Lambda=\{j\,:\,k_j\not= k_{j-1}\}\subset\{2,\ldots,\nu(D)-1\}$, we can write
\begin{align*}
\sum_{j=1}^{\nu(D)}[F(\alpha_j)-F(\alpha_{j-1})]x_j
	&=x^{(1)}_1\,e_{k_1}+\sum_{j\in\Lambda}\big[x^{(j)}_1\,e_{k_j}-x^{(j)}_1\,e_{k_{j-1}}\big]+x^{(\nu(D))}_1\,e_{k_{\nu(D)}}
	\\&=\sum_{j\in\Lambda\cup\{1\}}\lambda_j\,e_{k_j}+x^{(\nu(D))}_1\,e_{k_{\nu(D)}}
\end{align*}
where, for each $j\in\Lambda\cup\{1\}$, $\lambda_j$ corresponds to the difference between two elements of the set $\{x^{(i)}_1\,:\,i=1,\ldots,\nu(D)-1\}$. 
Clearly $|\lambda_j|\leq 2$, thus
\[
\Big\|\sum_{j=1}^{\nu(D)}[F(\alpha_j)-F(\alpha_{j-1})]x_j\Big\|_\infty\leq 2
\]
which implies that $\SV_0^1(F)<\infty$.
\end{example}

\smallskip

In view of this, a quite natural question arises: under which conditions is the space $SV(\ab,L(X))$ 
contained in the set of simply regulated functions? 

In \cite[Theorem 1]{Ba} it was proved that the inclusion holds whenever $X$ is a uniformly convex Banach space. 
Later, a final answer was given in \cite{BGC}, where a necessary and sufficient condition was established.
 
Aiming to present such result here, we have to consider a very special class of spaces, namely, 
all Banach spaces which do not contain an isomorphic copy of $c_0$ (by $c_0$ we denote the space of sequences in $\R$ 
converging to zero with respect to the supremum norm). By the Theorem of Bessaga and Pelczinsky (see Theorem \ref{Theo-BP} in the Appendix), 
the fact that a Banach space $X$ does not contain a copy of $c_0$ is equivalent to the following property:
\begin{itemize}
\item[{\rm(BP)}]{\it all series $\sum x_n$ in $X$ such that $\sum |x^*(x_n)|<\infty$ for every $x^*\in X^*$ are unconditionally convergent}.
\end{itemize}

Using this caractherization, we will present in details the relation between the sets 
$SV(\ab,L(X))$ and $SG(\ab,L(X))$ described in \cite[Theorem 5]{BGC}. 

\smallskip

\begin{theorem}\label{SV-SG}
The following assertions are equivalent:
\begin{enumerate}[$(1)$]
\item $X$ does not contain an isomorphic copy of $c_0$
\item every function $F:\ab\to L(X)$ of bounded semivariation is simply regulated.
\end{enumerate}
\end{theorem}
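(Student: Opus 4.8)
The plan is to prove the two implications separately, with the interesting direction being $(1)\Rightarrow(2)$ and the converse being handled by an explicit counterexample.

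\medskip

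\emph{Proof of $(1)\Rightarrow(2)$.} Assume $X$ does not contain a copy of $c_0$, so that property (BP) holds. Let $F\in SV(\ab,L(X))$ and fix $x\in X$ with $\|x\|_X\le 1$; the goal is to show $F_x:t\mapsto F(t)x$ is regulated, i.e.\ has one-sided limits everywhere. First I would argue by contradiction exactly as in the proof of Lemma~\ref{WSC}: if, say, $F_x$ fails to have a left limit at some $t\in(a,b]$, then one can extract an increasing sequence $\{s_n\}_n\subset(a,t)$ with $s_n\to t$ and some $\eta>0$ such that $\|F(s_{n+1})x-F(s_n)x\|_Y>\eta$ for all $n$. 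Set $y_n=F(s_{n+1})x-F(s_n)x$. The key estimate is that for any $y^*\in Y^*$ with $\|y^*\|_{Y^*}\le 1$,
\[
\sum_{n=1}^N|y^*(y_n)|\le\sum_{n=1}^N\var_{s_n}^{s_{n+1}}(y^*\circ F)=\var_{s_1}^{s_{N+1}}(y^*\circ F)\le\SV_a^b(F)<\infty,
\]
by Theorem~\ref{SV-via-X*}; here I use $\|x\|_X\le 1$ so that $|y^*(y_n)|\le\|[y^*\circ F(s_{n+1})-y^*\circ F(s_n)]x\|\le\|y^*\circ F(s_{n+1})-y^*\circ F(s_n)\|_{X^*}$. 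Hence $\sum|x^*(y_n)|<\infty$ for every $x^*\in X^*$ (note $y_n\in X$ here, since $Y=X$), so by property (BP) the series $\sum y_n$ is unconditionally convergent in $X$, forcing $\|y_n\|_X\to 0$ and contradicting $\|y_n\|_X>\eta$. The right-limit case is symmetric. This shows $F_x$ is regulated for every $x$, i.e.\ $F\in SG(\ab,L(X))$.

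\medskip

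\emph{Proof of $(2)\Rightarrow(1)$.} I would prove the contrapositive: if $X$ \emph{does} contain an isomorphic copy of $c_0$, then there is $F\in SV(\ab,L(X))$ which is not simply regulated. The model is precisely the $\ell_\infty$-example worked out just above (which also works inside any space containing $c_0$): pick a sequence $\{u_n\}_n$ in $X$ equivalent to the unit vector basis of $c_0$, together with coordinate functionals, and define $F$ on $[a,b]$ to be piecewise constant on the intervals $(\tfrac{1}{n+1},\tfrac{1}{n}]$ (rescaled to fit $\ab$) sending a fixed $x$ to $u_n$ via a rank-one-type operator built from the $c_0$-structure. The boundedness of $\SV_a^b(F)$ follows because the relevant partial sums $\sum\lambda_j(u_{k_j}-u_{k_{j-1}})$ with $|\lambda_j|\le 2$ stay bounded — this is exactly the $c_0$ (equivalently (BP)-failure via Bessaga--Pelczynski, Theorem~\ref{Theo-BP}) boundedness property. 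On the other hand $F(0\dot{+})x$ cannot exist since $\|u_k-u_{k+1}\|$ is bounded away from $0$. I would simply cite the displayed $\ell_\infty$-example, remarking that it transfers verbatim once $c_0\hookrightarrow X$, and reference \cite[Theorem 5]{BGC}.

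\medskip

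The main obstacle is organizing the $(1)\Rightarrow(2)$ direction so that the contradiction argument of Lemma~\ref{WSC} is reused cleanly rather than rederived: the essential point is recognizing that in Lemma~\ref{WSC} the hypothesis ``$Y$ weakly sequentially complete'' was used \emph{only} to pass from a weakly unconditionally Cauchy series to a norm-convergent one, and that property (BP) — available here because $X=Y$ contains no copy of $c_0$ — delivers exactly the same conclusion (unconditional, hence norm, convergence). Once that is flagged, the rest is bookkeeping with Theorem~\ref{SV-via-X*}. The $(2)\Rightarrow(1)$ direction is essentially free given the already-presented example, so no real difficulty arises there.
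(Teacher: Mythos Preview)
Your proposal is correct, and both directions are handled with the same essential ideas as the paper, though with some genuine differences in execution worth noting.

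For $(1)\Rightarrow(2)$, you and the paper both reduce to the same core fact: for any increasing sequence $s_n\to t$, the telescoping differences $y_n=F_x(s_{n+1})-F_x(s_n)$ satisfy $\sum_n|x^*(y_n)|\le\|x^*\|\,\|x\|\,\SV_a^b(F)<\infty$, so property~(BP) forces $\sum y_n$ to converge. The paper argues \emph{directly}: it shows $\lim_n F_x(t_n)$ exists for every such sequence and then spends a paragraph checking the limit is independent of the sequence (by merging two sequences into a third). Your contradiction argument is slightly more economical---once $\sum y_n$ converges you get $\|y_n\|\to 0$ immediately, contradicting the assumed oscillation---and it sidesteps the well-definedness check entirely. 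One small imprecision: the extraction ``$\|F(s_{n+1})x-F(s_n)x\|>\eta$ for all $n$'' is a hair stronger than what the failure of the Cauchy condition gives for free; the clean inductive construction yields $\eta/2$ (pick $u<v$ in $(s_n,t)$ with $\|F_x(v)-F_x(u)\|>\eta$, then one of $u,v$ differs from $s_n$ by more than $\eta/2$). This is cosmetic and does not affect the argument.

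For $(2)\Rightarrow(1)$, your route is \emph{simpler} than the paper's. You transplant the $\ell_\infty$ example via rank-one operators $F(t)x=\varphi(x)\,u_n$ on the $n$-th interval, where $\{u_n\}$ is the copy of the $c_0$-basis and $\varphi\in X^*$ extends the first coordinate functional. The semivariation bound reduces (after an Abel rearrangement) to $\big\|\sum_i\mu_i u_i\big\|\le C_2\max_i|\mu_i|\le 2C_2\|\varphi\|$, exactly as in the $\ell_\infty$ computation; and $\|u_k-u_{k+1}\|\ge C_1$ blocks simple regularity. The paper instead builds finite-rank operators $T_k$ whose ranges lie in the disjoint blocks $\mathrm{span}\{z_{2^k+1},\ldots,z_{2^k+k}\}$, using projections, a $2^k$-shift, and Hahn-Banach extensions. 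That machinery is not needed for the bare statement; your rank-one construction already delivers an $F\in SV\setminus SG$.
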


\smallskip

The proof of this theorem is contained in the following two lemmas.

\smallskip

\begin{lemma}
If $X$ does not contain an isomorphic copy of $c_0$, then \[SV(\ab,L(X))\subset SG(\ab,L(X)).\]
\end{lemma}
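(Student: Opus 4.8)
The plan is to fix $F\in SV(\ab,L(X))$ and $x\in X$, and show that the $Y$-valued function $t\mapsto F(t)x$ (here $Y=X$) is regulated, i.e.\ has one-sided limits everywhere. I would argue by contradiction: suppose $\lim_{s\to t-}F(s)x$ fails to exist for some $t\in(a,b]$. Then, as in the proof of Lemma~\ref{WSC}, I can extract an increasing sequence $\{s_n\}_n$ in $(a,t)$ with $s_n\to t$ and a constant $c>0$ such that $\|F(s_{n+1})x-F(s_n)x\|_X>c$ for all $n$ (a failure of the Cauchy criterion, after passing to a subsequence). Set $z_n=F(s_{n+1})x-F(s_n)x$, so $\|z_n\|_X>c$ for every $n$.

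\textbf{Key steps.} The core observation is that the telescoping partial sums $\sum_{n=1}^{N}z_n=F(s_{N+1})x-F(s_1)x$ are bounded in $X$, and more is true: for any finite set of signs $\varepsilon_n\in\{-1,+1\}$, the sum $\sum_{n\in A}\varepsilon_n z_n$ can be realized (after grouping the $z_n$ back into operator-increments evaluated at $\pm x$, $\|\!\pm\! x\|_X\le\|x\|_X$) as $[F(\beta_k)-F(\beta_{k-1})]w_k$-type combinations over a division of a subinterval of $[s_1,t)$, hence is bounded by $\|x\|_X\cdot\SV_{s_1}^{t'}(F)\le\|x\|_X\,\SV_{[s_1,t)}(F)<\infty$ uniformly. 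Consequently, for every $x^*\in X^*$ we get $\sum_{n}|x^*(z_n)|<\infty$ by the standard sign-choosing argument (choose $\varepsilon_n=\operatorname{sgn}x^*(z_n)$ over finitely many indices, exactly as in Theorem~\ref{SV-via-X*}), so $\sum z_n$ is a series in $X$ with $\sum|x^*(z_n)|<\infty$ for all $x^*\in X^*$. Since $X$ does not contain a copy of $c_0$, property (BP) from the Bessaga--Pe\l czy\'nski theorem forces $\sum_n z_n$ to be unconditionally convergent in $X$; in particular $\|z_n\|_X\to 0$, contradicting $\|z_n\|_X>c>0$. The existence of $F(t+)x$ for $t\in[a,b)$ follows symmetrically using $\SV_{(t,t+\delta]}(F)$.

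\textbf{Main obstacle.} The delicate point is the bookkeeping in the boundedness estimate: one must verify that an arbitrary signed subsum $\sum_{n\in A}\varepsilon_n z_n$ genuinely arises from a single admissible division of $[s_1,t)$ with operator arguments of norm $\le\|x\|_X$, so that $\SV$ controls it \emph{uniformly in $A$ and the signs}. Writing $z_n=F(s_{n+1})x-F(s_n)x$ and inserting $\varepsilon_n$ as $\varepsilon_n x\in X$ with $\|\varepsilon_n x\|_X=\|x\|_X$, the sum $\sum_{n\in A}\varepsilon_n z_n=\sum_{n\in A}[F(s_{n+1})-F(s_n)](\varepsilon_n x)$ is exactly of the form appearing in the definition of $V(F,D,[s_1,t'])$ for the division $D$ consisting of the relevant $s_n$'s (with indices outside $A$ contributing zero vectors), so after normalizing by $\|x\|_X$ it is bounded by $\SV_{s_1}^{t'}(F)\le\SV_{[s_1,t)}(F)$, which is finite by Theorem~\ref{basic} and the definition of semivariation on half-closed intervals. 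Once this is in place, the rest is the routine sign argument plus one invocation of (BP), and everything goes through cleanly.
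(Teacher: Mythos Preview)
Your argument is correct and rests on the same key mechanism as the paper's proof: bound the signed telescoping sums $\sum_{n}\varepsilon_n[F(s_{n+1})-F(s_n)]x$ by $\|x\|_X\,\SV_a^b(F)$ to obtain $\sum_n|x^*(z_n)|<\infty$ for every $x^*$, and then invoke property (BP). The only structural difference is cosmetic: the paper argues \emph{directly}, taking an arbitrary increasing sequence $t_n\uparrow t$, showing $\sum_n\big(F_x(t_n)-F_x(t_{n-1})\big)$ converges via (BP), and then spending a paragraph checking that the resulting limit does not depend on the choice of sequence (by merging two sequences into one). Your contradiction framing sidesteps that independence check entirely, at the cost of needing the (routine) extraction of a sequence with $\|z_n\|>c$ from the failure of the Cauchy criterion; either packaging is fine.
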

\begin{proof}
Given $F\in SV(\ab,L(X))$ and $x\in X$, $x\not=0$, let $F_x:\ab\to X$ be the function given by 
\[
F_x(t)=F(t)\,x\quad\mbox{for \ }t\in\ab.
\]
Fixed an arbitrary $t\in(a,b]$, to show that the left-sided limit $F_x(t-)$ exists, 
consider an increasing sequence $\{t_n\}_n$ on  $(a,t)$ converging to $t$. 

Let $x^*\in X^*$. For $N\in\N$, taking the division $D_N=\{t_0,t_1,\ldots,t_N, b\}$ of $\ab$ 
formed by elements of the sequence $\{t_n\}_n$ and $t_0=a$, we have
\[
\sum_{j=1}^N|x^*(F_x(t_j)-F_x(t_{j-1}))|=x^*\left(\sum_{j=1}^N[F(t_j)-F(t_{j-1})]\,\lambda_j\, x\right)
\]
where $\lambda_j={\rm sgn}(x^*(F_x(t_j)-F_x(t_{j-1})))$ for $j=1,\ldots, N$. If we put $x_j=\frac{\lambda_j \, x}{\|x\|_X}$, we get
\begin{align*}
\sum_{j=1}^N|x^*\big(F_x(t_j)-F_x(t_{j-1})\big)|&\leq\|x^*\|_{X^*}\|x\|_X\left\|\sum_{j=1}^N[F(t_j)-F(t_{j-1})]\,x_j\right\|_X
\\&\leq \|x^*\|_{X^*}\,\|x\|_X\,\SV_a^b(F).
\end{align*}
Since the inequality is valid for every $N\in\N$, we conclude that
\[ 
\sum_{n=1}^\infty|x^*(F_x(t_n)-F_x(t_{n-1}))|<\infty,\quad x^*\in X^*.
\]
By the property {\rm (BP)} of the space $X$, the series $\sum_{n=1}^\infty\big(F_x(t_n)-F_x(t_{n-1})\big)$ 
converges to some $z\in X$ and, consequently,
\[
\lim_{n\to\infty}F(t_n)\,x=\lim_{n\to\infty}\sum_{k=1}^n\big(F_x(t_k)-F_x(t_{k-1})\big)+F_x(a)=z+F(a)\,x.
\]

It remains to show that the limit does not depend on the choice of the sequence $\{t_n\}_n$. 
To this aim, let $\{s_n\}_n$ be another increasing sequence with $\lim_{n\to\infty}s_n=t$. By the same argument 
used above, there exists $\tilde{z}\in X$ such that 
\[
\tilde{z}=\sum_{n=1}^\infty\big(F_x(s_n)-F_x(s_{n-1})\big)
\quad\mbox{and}\quad
\lim_{n\to\infty}F_x(s_n)=\tilde{z}+F(a)\,x.
\]
Ordering the set $\{t_n:n\in\N\}\cup\{s_n:n\in\N\}$ we obtain an increasing sequence $\{r_n\}_n$ converging to $t$ whose series 
$\sum_{n=1}^\infty\big(F_x(r_n)-F_x(r_{n-1})\big)$ also converges. Moreover, the limit $\lim_{n\to\infty}F_x(r_n)$ exists.  
Since $\{F_x(t_n)\}_n$ and $\{F_x(s_n)\}_n$ are convergent subsequences of $\{F_x(r_n)\}_n$, we should have
\[
\lim_{n\to\infty}F_x(t_n)=\lim_{n\to\infty}F_x(r_n)=\lim_{n\to\infty}F_x(s_n)
\]
which proves that $\tilde{z}=z$ and $F_x(t-)=z+F(a)\,x$.
 
Similarly, we can show that the right-sided limit of $F_x$ exists for every $t\in [a,b)$.
\end{proof}

\smallskip

The second lemma gives the reverse implication from Theorem \ref{SV-SG}. Roughly speaking, 
we will show that if $c_0$ is isomorphically embedded into the space $Y$, 
one can construct a function $F:\ab\to L(X)$ of bounded semivariation which is not simply regulated.

\smallskip

\begin{lemma}
If $SV(\ab,L(X))\subset SG(\ab,L(X))$, then $X$ does not contain a copy of $c_0$.
\end{lemma}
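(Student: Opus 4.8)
The plan is to prove the contrapositive: assuming $X$ contains an isomorphic copy of $c_0$, I will construct a function $F:\ab\to L(X)$ of bounded semivariation that fails to be simply regulated. The starting point is to fix an isomorphic embedding $J:c_0\to X$; write $u_k=J(e_k)$ for the images of the canonical basis of $c_0$, so that there are constants $0<m\le M$ with $m\,\|\lambda\|_\infty\le\|\sum\lambda_k u_k\|_X\le M\,\|\lambda\|_\infty$ for every finitely supported $\lambda$ (and, by density, for all $\lambda\in c_0$). I would also fix a functional $\varphi\in X^*$ with $\|\varphi\|_{X^*}=1$, which will serve to turn vectors of $X$ into rank-one operators on $X$ in the usual way.

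Next I would mimic the construction from the proof of Theorem~\ref{infinite-dim} and from the preceding $\ell_\infty$-example, but now inside $X$ rather than the target space. Choose an increasing sequence $\{t_n\}_n$ in $(a,b)$ converging to $b$, and define, for $t\in\ab$ and $x\in X$,
\[
F(t)\,x=\varphi(x)\sum_{t_k<t}u_k,
\]
where the sum $\sum_{t_k<t}u_k$ is a finite partial sum of the $u_k$'s, hence a well-defined element of $X$; so $F(t)\in L(X)$ with $\|F(t)\|_{L(X)}\le M\,\|\varphi\|_{X^*}$ for all $t$ (since the partial sums have $\|\cdot\|_\infty$-norm $1$). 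This $F$ is not simply regulated: for the specific vector $x_0$ with $\varphi(x_0)=1$ (which exists since $\|\varphi\|_{X^*}=1$), the function $t\mapsto F(t)x_0$ runs through the partial sums $\sum_{k=1}^{N}u_k$ as $t\uparrow b$, and these do not converge in $X$ because $\|\sum_{k=1}^{N}u_k-\sum_{k=1}^{N-1}u_k\|_X=\|u_N\|_X\ge m>0$; hence $F(b\dot{-})x_0$ does not exist, so $F\notin SG(\ab,L(X))$.

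It remains to check that $F$ has bounded semivariation. For a division $D=\{\alpha_0,\dots,\alpha_{\nu(D)}\}$ and $x_j\in X$ with $\|x_j\|_X\le 1$, exactly as in the proof of Theorem~\ref{infinite-dim} one computes
\[
\sum_{j=1}^{\nu(D)}[F(\alpha_j)-F(\alpha_{j-1})]\,x_j=\sum_{k}\beta_k\,u_k,
\]
where $\beta_k=\varphi(x_j)$ if $t_k\in[\alpha_{j-1},\alpha_j)$ for the unique such $j$, and $\beta_k=0$ otherwise; in particular $|\beta_k|\le\|\varphi\|_{X^*}=1$ for all $k$, and only finitely many $\beta_k$ are nonzero. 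Since $\{\beta_k\}_k$ is then a finitely supported sequence with $\|\{\beta_k\}\|_\infty\le 1$, the upper bound of the embedding gives $\|\sum_k\beta_k u_k\|_X\le M\,\|\{\beta_k\}\|_\infty\le M$, so $V(F,D)\le M$ for every $D$ and hence $\SV_a^b(F)\le M<\infty$. This produces the required counterexample and completes the contrapositive. The only genuinely delicate point is the bookkeeping showing that the coefficient sequence $\{\beta_k\}$ arising from an arbitrary division has sup-norm at most $\|\varphi\|_{X^*}=1$ — but this is immediate from the fact that for each $k$ there is at most one index $j$ with $t_k\in[\alpha_{j-1},\alpha_j)$, so $\beta_k$ is a single value $\varphi(x_j)$ rather than a sum; no cancellation phenomenon (and hence no factor like the $2$ appearing in the $\ell_\infty$-example) intervenes here because the intervals $[\alpha_{j-1},\alpha_j)$ are disjoint.
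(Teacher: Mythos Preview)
Your construction has a genuine gap at the right endpoint. You define $F(t)x=\varphi(x)\sum_{t_k<t}u_k$ and assert that ``the sum $\sum_{t_k<t}u_k$ is a finite partial sum''; this is true for $t\in[a,b)$ but fails at $t=b$, since every $t_k$ satisfies $t_k<b$ and the series $\sum_k u_k$ diverges in $X$ (indeed $\|u_k\|_X\ge m>0$). Thus $F(b)$ is simply undefined. The same issue resurfaces in your semivariation estimate: every division of $\ab$ has $\alpha_{\nu(D)}=b$, so the last interval $[\alpha_{\nu(D)-1},b)$ contains infinitely many $t_k$, and your coefficient sequence $\{\beta_k\}$ is \emph{not} finitely supported --- it is eventually equal to $\varphi(x_{\nu(D)})$. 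The sum $\sum_k\beta_k u_k$ then fails to converge in $X$ whenever $\varphi(x_{\nu(D)})\neq 0$, so the bound $\|\sum_k\beta_k u_k\|_X\le M$ is not available (the isomorphism bound only controls $\sum\lambda_k u_k$ for $\lambda\in c_0$). This is precisely the point where your construction departs from Theorem~\ref{infinite-dim}: there the series $\sum y_n$ was unconditionally convergent, which is what made $F(b)$ well-defined and Lemma~\ref{Thorp-lemma} applicable.

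The fix is easy: declare $F(b)=0$ (or let the $t_n$ decrease to $a$ and set $F(a)=0$, as in the $\ell_\infty$ example). Redoing the computation with $F(b)=0$, the last increment contributes $-\varphi(x_{\nu(D)})\sum_{t_k<\alpha_{\nu(D)-1}}u_k$, and one obtains $\sum_j[F(\alpha_j)-F(\alpha_{j-1})]x_j=\sum_k\gamma_k u_k$ with $\gamma_k=\varphi(x_j)-\varphi(x_{\nu(D)})$ for the finitely many $k$ with $t_k<\alpha_{\nu(D)-1}$ and $\gamma_k=0$ otherwise. Now $|\gamma_k|\le 2$, the sequence is finitely supported, and the bound gives $\SV_a^b(F)\le 2M$. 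So a factor~$2$ \emph{does} appear --- contrary to your closing remark --- for exactly the reason it appeared in the $\ell_\infty$ example. Once repaired in this way your rank-one construction is correct and is considerably more elementary than the paper's proof, which builds operators $T_k$ out of coordinate projections, shift-type maps $S_k$, and Hahn-Banach extensions; your approach shows that none of that machinery is needed.
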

\begin{proof}
By contradiction, assume that $X$ contains a isomorphic copy of $c_0$ and denote it by $Z$. Let 
$\psi: c_0\to Z$ be an isomorphism and put $z_k:=\psi(e_k)$ where
$e_k,$ $k\in\N$, stands for the canonical Schauder basis of $c_0$.

It is known that there exist positive constants $C_1$ and $C_2$ such that, for $N\in\N$, taking $\lambda_j\in\R$, $j=1,\ldots,N$, 
we have
\[
C_1\,\sup_{1\leq j\leq N}|\lambda_j|\leq\Big\|\sum_{j=1}^N\lambda_j\,e_j\Big\|_{\infty}
\leq C_2\,\sup_{1\leq j\leq N}|\lambda_j|,
\]
(see [Kadets, Theorem 6.3.1], [Diestel, Theorem V.6]). Thus, 
by the fact that $Z$ and $c_0$ are isomorphic,
\begin{equation}\label{seq}
C_1\,\sup_{1\leq j\leq N}|\lambda_j|\leq\Big\|\sum_{j=1}^N\lambda_j\,z_j\Big\|_X
\leq C_2\,\sup_{1\leq j\leq N}|\lambda_j|,
\end{equation}
for $N\in\N$ and $\lambda_j\in\R$, $j=1,\ldots,N$.

Using the sequence $z_n$, $n\in\N$, mentioned above and its properties we will construct a function $F:\ab\to L(X)$ in a few steps.

\medskip

\noindent {\it Step 1.} Clearly, $z_n$, $n\in\N$, defines a basis for $Z$ and, for each $k\in\N$, the projection $\pi_k:Z\to\R$, given by 
$\pi_k(\sum_n\lambda_n\,z_n)=\lambda_k$, is continuous (see [Diestel, p. 32]). Since \eqref{seq} implies that
\[
\big|\pi_k\Big(\sum_{n=1}^N z_n\Big)\big|\leq\frac{1}{C_1}\Big\|\sum_{n=1}^N z_n\Big\|_Z,\quad N\in\N,
\]
we have $\|\pi_k\|_{Z^*}\leq \frac{1}{C_1}$ for every $k\in\N$.

\medskip

\noindent {\it Step 2.} For $k\in\N$, let $S_k:Z\to Z$ be given by 
\[
S_k(x)=\sum_{n=1}^k\pi_n(x)\,z_{2^k+n},\quad\mbox{for \ } x\in Z
\]
Note that, $S_k$ is a bounded linear operator on $Z$ for every $k\in\N$. Indeed, given $x\in Z$, we can write
\[
S_k(x)=\sum_{n=1}^k\pi_n(x)\,z_{2^k+n}=\sum_{j=1}^{2^k+k}\beta_j z_j
\]
where $\beta_j=\pi_n(x)$ if $j=2^k+n$ for some $n=1,\ldots,k$, otherwise $\beta_j=0$. Thus, by \eqref{seq},
\[
\|S_k(x)\|_Z\leq C_2 \sup_{1\leq j\leq 2^k+k}|\beta_j|= C_2 \sup_{1\leq n\leq k}|\pi_n(x)|\leq C_2 \sup_{1\leq n\leq k}\|\pi_n\|_{Z^*}\|x\|_Z,
\]
which implies that $\|S_k\|_{L(Z)}\leq \frac{C_2}{C_1}$ for every $k\in\N$.

\medskip

\noindent {\it Step 3.} Given $j,\,k\in\N$, put $f_{k,j}=\pi_j\circ S_k$. By the Hahn-Banach theorem, the functional  $f_{k,j}\in Z^*$  
can be extended to a continuous linear functional $\tilde{f}_{k,j}$ on $X$ satisfying
\begin{equation}\label{func}
\|\tilde{f}_{k,j}\|_{X^*}=\|f_{k,j}\|_{Z^*}\leq \|\pi_j\|_{Z^*}\|S_k\|_{L(Z)}\leq \frac{C_2}{(C_1)^2}.
\end{equation}

\medskip

\noindent {\it Step 4.} For $x\in X$ and $k\in\N$, let $T_k(x)=\sum_{j=1}^k\tilde{f}_{k,2^k+j}(x)\,z_{2^k+j}$.
Clearly, $T_k\in L(X)$ and it follows from \eqref{seq} and \eqref{func} that 
\[
\|T_k(x)\|_X=\Big\|\sum_{j=1}^{k}\tilde{f}_{k,2^k+j}(x)\,z_{2^k+n}\Big\|_X
\leq C_2 \sup_{1\leq j\leq k}|\tilde{f}_{k,2^k+j}(x)|
\leq\Big(\frac{C_2}{C_1}\Big)^2\|x\|_X\,,\quad x\in X,
\]
that is, $\|T_k\|_{L(X)}\leq \big(\frac{C_2}{C_1}\big)^2$ for all $k\in\N$.

\medskip

We are now ready to define $F:\ab\to L(X)$. Given $x\in X$, let 
\[
F(t)x=T_k(x)\quad\mbox{for \ }t\in(t_{k+1},t_k]
\]
where $t_k=a+\frac{(b-a)}{k}$, $k\in\N$. 

It is not hard to see that $F$ is not simply regulated. Indeed, for each $k\in\N$, noting that $S_k(z_1)=z_{2^k+1}$, we get
\[
F(t_k)z_1=T_k(z_1)=\sum_{j=1}^kf_{k,2^k+j}(z_1)\,z_{2^k+j}= \sum_{j=1}^k\pi_{2^k+j}\big(S_k(z_1)\big)z_{2^k+j}=z_{2^k+1},
\]
which by \eqref{seq} leads to
\[
\|[F(t_k)-F(t_{k+1})]\,z_1\|_X=\|z_{2^k+1}-z_{2^{k+1}+1}\|_X\geq C_1.
\]
Hence the limit $\lim_{t\to a+} F(t)\,z_1$ does not exist.

Now, we will show that $F\in SV(\ab,L(X))$. Considering a division $D\in\mathcal D\ab$, with 
$D=\{\alpha_0,\alpha_1,\ldots,\alpha_{\nu(D)}\}$, let $k_j\in\N$ be such that $\alpha_j\in(t_{k_j+1},t_{k_j}]$, $j=1,\ldots,\nu(D)$.
For $x_j\in X$, $j=1,\ldots,\nu(D)$ with $\|x_j\|_X\leq 1$ we have
\[
\sum_{j=1}^{\nu(D)}[F(\alpha_j)-F(\alpha_{j-1})]x_j
=T_{k_1}(x_1)+\sum_{j=2}^{\nu(D)}[T_{k_j}(x_j)-T_{k_{j-1}}(x_j)]
\]
Taking $\Lambda=\{j\,:\,k_j\not= k_{j-1}\}\subset\{2,\ldots,\nu(D)-1\}$, we can write
\[
\sum_{j=1}^{\nu(D)}[F(\alpha_j)-F(\alpha_{j-1})]x_j
	=\sum_{j\in\Lambda\cup\{1\}}T_{k_j}(y_j)+T_{k_{\nu(D)}}(x_{\nu(D)})
\]
where, for each $j\in\Lambda\cup\{1\}$, $y_j\in X$ corresponds to the difference between two elements of the set $\{x_i\,:\,i=1,\ldots,\nu(D)-1\}$. 
Noting that $\|y_j\|_X\leq 2$, by \eqref{seq} and \eqref{func}, it follows that
\begin{align*}
\Big\|\hskip-1.2mm\sum_{j\in\Lambda\cup\{1\}}T_{k_j}(y_j)\Big\|_X
	&=\Big\|\hskip-1.2mm\sum_{j{\in}\Lambda\cup\{1\}}\sum_{n=1}^{k_j}\tilde{f}_{k_j,2^{k_j}+n}(y_j)\,z_{2^{k_j}+n}\Big\|_X
	\\&\leq C_2 \sup_{\stackrel{1\leq n\leq k_j}{j\in\Lambda\cup\{1\}}}\hskip-1.3mm|\tilde{f}_{k_j,2^{k_j}+n}(y_j)|
	\leq 2\Big(\frac{C_2}{C_1}\Big)^2
\end{align*}
Therefore,
\[
\Big\|\sum_{j=1}^{\nu(D)}[F(\alpha_j)-F(\alpha_{j-1})]\,x_j\Big\|_X
\leq 2\Big(\frac{C_2}{C_1}\Big)^2+\|T_{k_{\nu(D)}}(x_{\nu(D)})\|_X\leq 3\Big(\frac{C_2}{C_1}\Big)^2
\]
wherefrom it follows that $\SV_a^b(F)<\infty$.

In summary, $F\in SV(\ab,L(X))$ and $F$ is not simply regulated, which is a 
contradiction. Thus the lemma is established.
\end{proof}

\section{Semivariation and the Kurzweil integral}
In the recent years non-absolute integrals have been increasingly investigated. Among them it is worth highlighting the 
one due to Kurzweil, \cite{Ku1}, whose concept of integration has been the background of several 
papers related to differential and difference equations. See, for instance, \cite{sla}, \cite{FMS1} and \cite{MS}.

This section is dedicated to investigate the connection between the semivariation and the integral due to Kurzweil  
in two different aspects. First, we present a result by Honig which generalizes the following fact: {\it every function of bounded variation 
is a multiplier for Kurzweil integrable functions}. Next, we apply the concept of semivariation to derive two convergence results
for Stieltjes type integral and we conclude the section by proving a new characterization of 
semivariation by the means of the abstract Kurzweil-Stieltjes integral.

In what follows we deal with special cases of the 
integral introduced by J.~Kurzweil in \cite{Ku1} under the name ``generalized Perron integral". 
For the reader's convenience, let us recall its definition.

\medskip 

As usual, a {\it partition of }$\ab$ is a tagged division $P=(\tau_j,[\alpha_{j-1},\alpha_j])$ where 
the set $\{\alpha_0,\alpha_1,\ldots,\alpha_{\nu(P)}\}$ is a division of $\ab$ and $\tau_j\in[\alpha_{j-1},\alpha_j]$ for $j=1,\ldots,\nu(P)$. 
A {\it gauge on }$\ab$ is a positive function $\delta:\ab\to\R^+$. Furthermore, given a gauge $\delta$ on $\ab$, a partition 
$P=(\tau_j,[\alpha_{j-1},\alpha_j])$ is called $\delta$-fine if
\[
[\alpha_{j-1},\alpha_j]\subset(\tau_j-\delta(\tau_j),\tau_j+\delta(\tau_j))\quad\mbox{for \ } j=1,\ldots,\nu(P).
\]
Given an arbitrary gauge $\delta$ on $\ab$, the existence of (at least one) $\delta$-fine partition is a known result, 
the so-called Cousin's lemma (see \cite[Theorem 4.1]{Henstock} or \cite[Lemma 1.4]{Sch}).

\medskip

A function $U:[a,b]\times [a,b]\to X$ is 
Kurzweil integrable on $[a,b]$, if there exists $I\in X$ such that 
for every $\varepsilon>0$, there is a gauge $\delta$ on $\ab$ such that
\[
\left\|\sum_{j=1}^{\nu(P)}[U(\tau_j,\alpha_{j})-U(\tau_j,\alpha_{j{-}1})]- I\right\|_X<\varepsilon
\mbox{ \ for all $\delta$-fine partitions of $\ab$.}
\]
In this case, we define the Kurzweil integral as $\int_a^b DU(\tau,t)=I$.

For a more comprehensive study of the properties of the Kurzweil integral we refer to the monograph \cite{Sch} and references therein.

Taking $U(\tau,t)=f(\tau)\,t$ for $t\in\ab$, where $f:[a,b]\to X$ is a given function,
the definition above corresponds to an integration process based on Riemann-type sums, namely, the Henstock-Kurzweil integral.
Such integral is known to extend the theory of Lebesgue integral. 
In what follows, when dealing with the Kurzweil-Henstock integral we will write simply $\int_a^b f(t)\dd t$ instead of $\int_a^b D[f(\tau)t]$.

Secondly, we are interested in the abstract Kurzweil-Stieltjes integrals $\int_a^bF\,\dd[g]$ and $\int_a^b\dd[F]\,g$,  
where $F:\ab\to L(X)$ and $g:[a,b]\to X$ (see \cite{Sch1}). These integrals are obtained from the choices  
$U(\tau,t)=F(\tau)\,g(t)$ and $U(\tau,t)=F(t)\,g(\tau)$ for $t,\tau\in\ab$, respectively. 

\medskip

In the sequel we state two existence results for the abstract Kurzweil-Stieltjes integral relatively to functions of bounded semivariation  
(for the proof see \cite[Thereom 3.3]{MT}).

\smallskip

\begin{theorem}\label{existence}
Let $F\in SV(\ab,L(X,Y))$.
\begin{enumerate}[$(i)$]
\item If $g\in G(\ab,X)$, then the integral $\int_a^bF\,\dd[g]$ exists and
\[
\Big\|\int_a^bF\,\dd[g]\Big\|_X\leq \big(\|F(a)\|_X+\|F(b)\|_X+\SV_a^b(F)\big)\,\|g\|_\infty.
\]
\item If $F\in SG(\ab,L(X))$ and $g\in G(\ab,X)$, then the integral $\int_a^b\dd[F]g$ exists and
\[
\Big\|\int_a^b\dd[F]\,g\Big\|_X\leq \SV_a^b(F)\,\|g\|_\infty.
\]
\end{enumerate}
\end{theorem}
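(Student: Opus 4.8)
The plan is to prove the existence of the two Kurzweil--Stieltjes integrals by verifying the Cauchy--Bolzano condition: a $U$-function is Kurzweil integrable precisely when for every $\varepsilon>0$ there is a gauge $\delta$ such that any two $\delta$-fine partitions produce Riemann sums within $\varepsilon$ of each other. For part $(i)$, where $U(\tau,t)=F(\tau)g(t)$, the key structural device is \emph{summation by parts}: given a $\delta$-fine partition $P=(\tau_j,[\alpha_{j-1},\alpha_j])$, the sum $\sum_j F(\tau_j)[g(\alpha_j)-g(\alpha_{j-1})]$ can be rewritten, after an Abel-type rearrangement, in terms of increments of $F$ evaluated on the tags times values of $g$, plus boundary terms. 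This is exactly where the hypothesis $F\in SV(\ab,L(X,Y))$ enters: the rearranged sum has the form $\sum_k[F(\sigma_k)-F(\sigma_{k-1})]\,z_k$ with $\|z_k\|_X$ controlled by $\|g\|_\infty$ (or by the oscillation of $g$ on small intervals, when we want the Cauchy estimate rather than the a priori bound), so its norm is dominated by a constant multiple of $\SV_a^b(F)$ together with the boundary contributions $\|F(a)\|\,\|g\|_\infty+\|F(b)\|\,\|g\|_\infty$. The a priori bound in the statement then falls out of the same computation applied to a single partition.

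The refinement needed for the Cauchy condition is standard: since $g\in G(\ab,X)$ is regulated, it is a uniform limit of finite step functions, equivalently it has small oscillation outside a finite set of ``jump'' points and small jumps outside a finite set. Given $\varepsilon>0$ one picks finitely many points where $g$ has a jump exceeding a threshold, chooses the gauge $\delta$ small enough to isolate those points and small enough that on the complementary intervals the oscillation of $g$ is tiny, and then compares two $\delta$-fine partitions by passing to a common refinement. On the common refinement the difference of the two Riemann sums is again, after summation by parts, a sum of $F$-increments against vectors whose norms are either small (controlled by the oscillation of $g$) or concentrated near the finitely many jump points; using $\SV_a^b(F)<\infty$ and the superadditivity/monotonicity of the semivariation from Theorem~\ref{basic} and Corollary~\ref{mapping} one bounds this by a fixed multiple of $\varepsilon$. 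This yields existence of $\int_a^b F\,\dd[g]$; the norm inequality is then obtained by applying the one-partition estimate and letting the mesh go to zero.

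For part $(ii)$, where $U(\tau,t)=F(t)g(\tau)$, the roles of $F$ and $g$ are partially swapped: the Riemann sum is $\sum_j[F(\alpha_j)-F(\alpha_{j-1})]\,g(\tau_j)$, which already has the increments of $F$ in the right place, so no summation by parts is needed for the a priori bound --- it is immediate from the definition of semivariation that its norm is at most $\SV_a^b(F)\,\|g\|_\infty$, and this passes to the integral. For existence we again verify the Cauchy condition. Here the extra hypothesis $F\in SG(\ab,L(X))$ is essential: simple regulatedness guarantees that the one-sided limits $F(t\dot-)$, $F(t\dot+)$ exist strongly in $L(X)$, so that on a common refinement of two $\delta$-fine partitions the discrepancy $\sum[F(\alpha_j)-F(\alpha_{j-1})](g(\tau_j)-g(\tau_j'))$ between the two choices of tags can be controlled: the increments of $F$ over each small subinterval, applied to the fixed vectors $g(\tau_j)-g(\tau_j')\in X$, are small because $g$ is regulated (small oscillation away from finitely many jumps) while near the jumps of $g$ one uses that the relevant $F$-increments are small in the strong operator sense together with the semivariation bound. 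Passing to common refinements and splitting into a ``good'' part (small oscillation of $g$) and finitely many ``bad'' points handles the estimate.

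The main obstacle, in both parts, is the comparison of two arbitrary $\delta$-fine partitions: one cannot directly subtract their Riemann sums because the subintervals and tags differ, so one must pass to a common refinement and carefully account for the fact that refining a partition changes which tag governs each piece. Keeping the bookkeeping straight --- tracking which contributions are controlled by the semivariation of $F$ on a subinterval, which by the oscillation of $g$, and which by the strong convergence of $F$ near a jump of $g$ --- while ensuring all the error terms collectively stay below a constant multiple of $\varepsilon$ is the delicate part. Once the summation-by-parts identity (for $(i)$) and the strong one-sided limits of $F$ (for $(ii)$) are in hand, however, the estimates are routine applications of $\SV_a^b(F)<\infty$ and the regulatedness of $g$, and the norm inequalities come for free from the single-partition bounds.
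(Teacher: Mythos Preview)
The paper does not actually prove Theorem~\ref{existence}: it merely states the result and refers the reader to \cite[Theorem~3.3]{MT} for the proof. So there is no ``paper's own proof'' to compare against beyond that citation.

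Your plan is sound and would yield a correct proof. The summation-by-parts rearrangement in part~$(i)$ is exactly right: writing
\[
\sum_{j=1}^{m} F(\tau_j)\big[g(\alpha_j)-g(\alpha_{j-1})\big]
 = F(b)g(b)-F(a)g(a)-\sum_{k}\big[F(\sigma_k)-F(\sigma_{k-1})\big]\,z_k
\]
with the $\sigma_k$ forming a division of $\ab$ and $\|z_k\|_X\le\|g\|_\infty$ gives precisely the a~priori bound (this is essentially Lemma~5.6 in the paper, stated a few lines later). For part~$(ii)$ the sum is already of the form $\sum[F(\alpha_j)-F(\alpha_{j-1})]g(\tau_j)$, so the estimate is immediate.

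The approach taken in \cite{MT} (and in \cite{Sch1}) is somewhat different in execution from your direct Cauchy--Bolzano argument: rather than comparing two arbitrary $\delta$-fine partitions via a common refinement, one first proves existence for \emph{step} functions $g$ by explicit computation, and then passes to general $g\in G(\ab,X)$ by uniform approximation, using the a~priori bound to show the sequence of integrals is Cauchy. The step-function route makes the role of the hypothesis $F\in SG(\ab,L(X))$ in part~$(ii)$ more transparent: the integral $\int_a^b\dd[F](\chi_{[a,\tau]}x)$ involves the one-sided limit $F(\tau\dot{+})x$, which need not exist without simple regulatedness. Your description locates that hypothesis in the jump-handling of the Cauchy estimate, which is equivalent but slightly obscures why the condition is genuinely needed. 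Either route works; the step-function approach avoids the bookkeeping of common refinements, while yours is more self-contained.
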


\smallskip

Let us denote by $\mathcal K(\ab,X)$ the set of all Henstock-Kurzweil integrable functions, that is, 
all functions $f:\ab\to X$ whose integral $\int_a^b f(t)\dd t$ exists. The linearity of the integral implies that $\mathcal K(\ab,X)$ is a linear space 
(cf. \cite{G} or \cite{Sch}). 

The following theorem, borrowed from \cite[1.15]{H2}, shows that the functions of 
bounded semivariation are multipliers for the space $\mathcal K(\ab,X)$.

\smallskip

\begin{theorem}\label{K-multi}
Let $g\in\mathcal K(\ab,X)$ and $F\in SV(\ab,L(X))$. 
Consider the function $Fg:\ab\to X$ given by $(Fg)(t)=F(t)g(t)$ for $t\in\ab$. Then $Fg\in\mathcal K(\ab,X)$ and
\begin{equation}\label{by-parts}
\int_a^bF(t)g(t)\,\dd t=\int_a^bF\,\dd [\tilde{g}],
\end{equation}
where $\tilde{g}(t)=\int_a^tg(s)\,\dd s$ for $t\in\ab$.
\end{theorem}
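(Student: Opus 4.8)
The plan is to establish \eqref{by-parts} first as an identity connecting a Henstock–Kurzweil integral with an abstract Kurzweil–Stieltjes integral, and then deduce the integrability of $Fg$ as a byproduct. The natural route is through a substitution-type theorem for the Kurzweil integral: since $\widetilde g(t)=\int_a^t g(s)\,\dd s$ is by definition the indefinite HK-integral of $g$, one expects that for any $\tau$, $\dd[\widetilde g]$ behaves, under the integral sign, like $g(\tau)\,\dd t$. More precisely, I would first observe that $g\in\mathcal K(\ab,X)$ forces $\widetilde g\in G(\ab,X)$ — the indefinite HK-integral is continuous, hence regulated — so that by Theorem~\ref{existence}(i) the right-hand side $\int_a^b F\,\dd[\widetilde g]$ exists; this disposes of the existence question for the Stieltjes side at once.

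The core of the argument is then to compare Riemann–Kurzweil sums for the two sides. For a tagged partition $P=(\tau_j,[\alpha_{j-1},\alpha_j])$ one writes
\[
\sum_{j=1}^{\nu(P)} F(\tau_j)\,\big[\widetilde g(\alpha_j)-\widetilde g(\alpha_{j-1})\big]
=\sum_{j=1}^{\nu(P)} F(\tau_j)\int_{\alpha_{j-1}}^{\alpha_j} g(s)\,\dd s,
\]
and the corresponding sum for $\int_a^b F(t)g(t)\,\dd t$ is $\sum_j F(\tau_j)g(\tau_j)(\alpha_j-\alpha_{j-1})$. The difference of the two sums is $\sum_j F(\tau_j)\big[\int_{\alpha_{j-1}}^{\alpha_j} g(s)\,\dd s-g(\tau_j)(\alpha_j-\alpha_{j-1})\big]$, and one must show this can be made small by choosing a sufficiently fine gauge. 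The standard device is the Saks–Henstock lemma applied to $g$: given $\varepsilon>0$, pick a gauge $\delta_g$ witnessing HK-integrability of $g$ with the property that for every $\delta_g$-fine subpartition the tagged Riemann sums approximate the corresponding sub-integrals to within $\varepsilon$. Here one must be careful, since $F(\tau_j)$ is an operator and $\|F\|_\infty<\infty$ (as $F\in SV$ implies boundedness, by the inequality $\|F\|_\infty\le\|F\|_{SV}$ noted in Section~2); so after factoring $\|F\|_\infty$ out of the norm estimate, the Saks–Henstock control of $g$ finishes the job. This simultaneously shows $Fg\in\mathcal K(\ab,X)$ with integral equal to $\int_a^b F\,\dd[\widetilde g]$.

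The main obstacle I anticipate is the operator-valued twist in the Saks–Henstock step: the classical Saks–Henstock lemma bounds $\sum_j\|\int_{\alpha_{j-1}}^{\alpha_j} g-g(\tau_j)\Delta\alpha_j\|$, but here each term is pre-multiplied by $F(\tau_j)\in L(X)$ before summing, and one cannot simply pull $\|F(\tau_j)\|$ inside a single norm. The clean fix is to bound
\[
\Big\|\sum_{j} F(\tau_j)\Big[\int_{\alpha_{j-1}}^{\alpha_j} g(s)\,\dd s-g(\tau_j)\Delta\alpha_j\Big]\Big\|_X
\le \|F\|_\infty\sum_{j}\Big\|\int_{\alpha_{j-1}}^{\alpha_j} g(s)\,\dd s-g(\tau_j)\Delta\alpha_j\Big\|_X
\]
and then invoke the Saks–Henstock lemma for the scalar-weighted $X$-valued integral of $g$ on the right. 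Once this estimate is in place, letting the gauge shrink and combining with the already-established existence of $\int_a^b F\,\dd[\widetilde g]$ from Theorem~\ref{existence}(i) yields both conclusions of the theorem. A secondary point worth spelling out is the continuity (hence regularity) of $\widetilde g$, which I would justify directly from the Cauchy criterion for the HK-integral rather than citing it; this is routine but should not be skipped, as it underpins the applicability of Theorem~\ref{existence}(i).
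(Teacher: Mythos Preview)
Your argument has a genuine gap at the Saks--Henstock step, and it is exactly the point where the paper's proof diverges from yours.

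You propose to bound
\[
\Big\|\sum_{j} F(\tau_j)\Big[\int_{\alpha_{j-1}}^{\alpha_j} g(s)\,\dd s-g(\tau_j)\,\Delta\alpha_j\Big]\Big\|_X
\le \|F\|_\infty\sum_{j}\Big\|\int_{\alpha_{j-1}}^{\alpha_j} g(s)\,\dd s-g(\tau_j)\,\Delta\alpha_j\Big\|_X
\]
and then control the right-hand side by the Saks--Henstock lemma. The inequality is fine, but the conclusion is not: for $X$-valued functions the Saks--Henstock lemma that follows from mere Henstock--Kurzweil integrability is the \emph{weak} form, which controls $\big\|\sum_{j\in J}[\,\cdot\,]\big\|_X$ for arbitrary subcollections $J$, not the \emph{strong} form controlling $\sum_j\|[\,\cdot\,]\|_X$. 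The latter characterises the strictly smaller class of variationally Henstock integrable functions, and in infinite-dimensional $X$ there exist $g\in\mathcal K(\ab,X)$ for which $\sum_j\|\cdots\|_X$ cannot be made small no matter how fine the gauge. So your estimate breaks down precisely for the functions the theorem is meant to cover.

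The paper's proof gets around this by an Abel summation (summation by parts) identity
\[
\sum_{j=1}^m A_j\,x_j
  =\sum_{j=1}^m[A_j-A_{j-1}]\Big(\sum_{k=j}^m x_k\Big)+A_0\Big(\sum_{k=1}^m x_k\Big),
\]
applied with $A_j=F(\tau_j)$ and $x_j=g(\tau_j)\,\Delta\alpha_j-\int_{\alpha_{j-1}}^{\alpha_j}g$. The weak Saks--Henstock lemma bounds each tail $\big\|\sum_{k\ge j}x_k\big\|_X$ by $\varepsilon$, and the increments $F(\tau_j)-F(\tau_{j-1})$ acting on unit vectors are then absorbed into $\SV_a^b(F)$. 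This is the essential idea you are missing: the semivariation of $F$, not merely its boundedness, is what compensates for the unavailability of the strong Saks--Henstock estimate.
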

\begin{proof}
First of all, noting that the indefinite integral of $g$ defines a continuous function on $\ab$ (cf. \cite[Theorem 9.12]{G}), 
it is clear by Theorem \ref{existence} that $\int_a^bF\,\dd \tilde{g}$ exists.

Given $\varepsilon>0$, let $\delta_1$ and $\delta_2$ be 
gauges on $\ab$ such that
\begin{equation}\label{eq-1}
  \Bigg\|\sum_{j=1}^{\nu(P)}F(\tau_j)[\tilde{g}(\alpha_{j})-\tilde{g}(\alpha_{j{-}1})]-\int_a^b F\,\dd[\tilde{g}]\Bigg\|_X<\varepsilon
  \mbox{ \ for all $\delta_1$-fine partitions of $\ab$,}
\end{equation}
and
\[
  \Bigg\|\sum_{j=1}^{\nu(P)}g(\tau_j)(\alpha_{j}-\alpha_{j{-}1})- \int_a^bg(s)\,\dd s\Bigg\|_X<\varepsilon
\mbox{ \ for all $\delta_2$-fine partitions of $\ab$.}
\]

Due to the Saks-Henstock Lemma (see \cite[Lemma 1.13]{Sch}), for any $\delta_2$-fine partition of $\ab$, 
$P\,{=}\,(\tau_j,[\alpha_{j-1},\alpha_j])$, we have
\begin{equation}\label{eq-2}
  \Bigg\|\sum_{k{=}j}^{\nu(P)} \Big[g(\tau_k)(\alpha_{k}-\alpha_{k{-}1})
               \,{-}\,\int_{\alpha_{k{-}1}}^{\alpha_k}g(s)\,\dd s\Big]\Bigg\|_X<\varepsilon
  \quad\mbox{for \ } j=1,2,\dots,\nu(P).
\end{equation}

Put $\delta(t)=\min\{\delta_1(t),\,\delta_2(t)\}$ for $t \in \ab$. 
Given a $\delta$-fine partition $P=(\tau_j,[\alpha_{j-1},\alpha_j])$ of $\ab$, by \eqref{eq-1} we get
\begin{align*}
   &\Bigg\|\sum_{j=1}^{\nu(P)}F(\tau_j)g(\tau_j)(\alpha_{j}-\alpha_{j{-}1})-\int_a^b F\,\dd[\tilde{g}]\Bigg\|_X
  \\&\quad
   \le\Big\|\sum_{j=1}^{\nu(P)}F(\tau_j)g(\tau_j)(\alpha_{j}-\alpha_{j{-}1})-\sum_{j=1}^{\nu(P)}F(\tau_j)[\tilde{g}(\alpha_{j})-\tilde{g}(\alpha_{j{-}1})]\Big\|_X
  \\&\qquad
   +\Bigg\|\sum_{j=1}^{\nu(P)}F(\tau_j)[\tilde{g}(\alpha_{j})-\tilde{g}(\alpha_{j{-}1})]-\int_a^b F\,\dd[\tilde{g}]\Bigg\|_X
  \\&\quad < 
   \Bigg\|\sum_{j{=}1}^{\nu(P)} F(\tau_j)\Big[g(\tau_j)(\alpha_{j}-\alpha_{j{-}1})-\int_{\alpha_{j{-}1}}^{\alpha_j}g(s)\dd s\Big]\Bigg\|_X+\varepsilon
\end{align*}

In order to estimate the other term in the last inequality, we will make use of the following equality mentioned in \cite{H2}:
\[
   \sum_{j=1}^m A_j\,x_j
  =\sum_{j=1}^m[A_j\,{-}\,A_{j{-}1}]\left(\sum_{k{=}j}^m x_k\right)\,{+}\,A_0\left(\sum_{k{=}1}^m x_k\right)
\]
for all $A_j\in L(X)$ and all $x_j\in X$. Let us consider $m=\nu(P)$ and also
\[
A_0=F(a),\quad A_j=F(\tau_j),\quad x_j=g(\tau_j)(\alpha_{j}-\alpha_{j{-}1})-\int_{\alpha_{j{-}1}}^{\alpha_j}g(s)\dd s
\]
for $j=1,\ldots\nu(P)$. Note that, by \eqref{eq-2} we have $\Big\|\sum_{k{=}j}^{\nu(P)}x_k \Big\|_X\leq\varepsilon$ for each $j=1,\ldots\nu(P)$. 
Therefore
\begin{align*}
   &\Bigg\|\sum_{j{=}1}^{\nu(P)} F(\tau_j)\Big[g(\tau_j)(\alpha_{j}-\alpha_{j{-}1})-\int_{\alpha_{j{-}1}}^{\alpha_j}g(s)\dd s\Big]\Bigg\|_X
  \\&\qquad 
   <\varepsilon\Bigg\|\sum_{j{=}1}^{\nu(P)} [F(\tau_j)-F(\tau_{j{-}1})]\,
              \dis\frac{\sum_{k{=}j}^{\nu(P)}x_k }{\varepsilon}\Bigg\|_X
	+\Big\|F(a)\Big(\sum_{j{=}1}^{\nu(P)}x_j \Big)\Big\|_X
   \\&\qquad<\varepsilon\,\big(\SV_a^b(F)+\|F(a)\|_{L(X)}\big)
\end{align*}
(where $\tau_0=a$). Having all these in mind, we obtain
\[
\Bigg\|\sum_{j=1}^{\nu(P)}F(\tau_j)g(\tau_j)(\alpha_{j}-\alpha_{j{-}1})-\int_a^b F\,\dd[\tilde{g}]\Bigg\|_X
	<\varepsilon\,\big(1+\SV_a^b(F)+\|F(a)\|_{L(X)}\big)
\]
for all $\delta$-fine partitions of $\ab$, wherefrom we conclude that $\int_a^b F(t)\,g(t)$ exists and the unicity 
of the integral leads to \eqref{by-parts}.
\end{proof}

\smallskip

\begin{remark}
The theorem above is presented in \cite{H2} when 
integration by parts formulas for Henstock-Kurzweil integral are discussed. 
Indeed, taking into account the results from \cite{Sch4} (see also \cite[Corollary 3.6]{MT}), the equality \eqref{by-parts} can be rewritten as 
\[
\int_a^bF(t)g(t)\,\dd t=F(b)\tilde{g}(b)- \int_a^bF\,\dd \tilde{g}
\]
Moreover, due to the continuity of the function $\tilde{g}$, 
the Stieltjes-type integral in the formula above (as well as in \eqref{by-parts}) 
can be read as a Riemann-Stieltjes integral defined in the Banach space setting (see \cite[1.13]{H2}).

We would like also to remark that the result in Theorem \ref{K-multi} remains valid if we replace the function 
$g:\ab\to X$ by Henstock-Kurzweil integrable functions defined in $\ab$ and taking values in $L(X)$. 
\end{remark}

\smallskip

Now we turn our attention to the 
connection between semivariation and the abstract Kurzweil-Stieltjes integral. First, we focus in Helly type result, that is, 
convergence results for the integral based on assumptions similar to those presented in Lemma \ref{Helly}.  
The theorem in the sequel, to our knowlegde, is not available in literature in the presented formulation.

\smallskip

\begin{theorem}
Let $F:\ab\to L(X)$, a sequence $\{F_n\}_n\subset SV(\ab,L(X))$ and a 
constant $M>0$ be such that
\[
\SV_a^b(F_n)\leq M\quad\mbox{for every \ }n\in\N,
\]
and
\[
\lim_{n\to\infty}\|F_n(t)-F(t)\|_{L(X)}=0\quad\mbox{for every \ }t\in \ab.
\]
If $g\in G(\ab,X)$, then the integrals $\int_a^b F\,\dd[g]$ and $\int_a^b F_n\,\dd[g]$, $n\in\N,$ exist and
\begin{equation}\label{conv1}
   \lim_{n\to\infty}
   \int_a^b F_n\,\dd[g]=\int_a^b F\,\dd[g].
\end{equation}
\end{theorem}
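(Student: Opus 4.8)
The plan is to first observe that existence of all the integrals $\int_a^b F_n\,\dd[g]$ and $\int_a^b F\,\dd[g]$ is immediate: each $F_n$ lies in $SV(\ab,L(X))$ by hypothesis, and $F\in SV(\ab,L(X))$ with $\SV_a^b(F)\le M$ by Lemma~\ref{Helly}, so Theorem~\ref{existence}(i) applies to $g\in G(\ab,X)$ in every case. Thus the only real content is the convergence \eqref{conv1}. The natural approach is the classical Helly-Bray scheme: reduce the Stieltjes integral to Riemann-Stieltjes sums using the uniform bound on the semivariations, split the estimate into a ``near $a$ plus near $b$ plus a coarse division in between'' type argument, and exploit that pointwise convergence $F_n(t)\to F(t)$ is automatically uniform on any \emph{finite} set of points.

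More concretely, I would fix $\varepsilon>0$ and use the existence of $\int_a^b F\,\dd[g]$ together with the norm estimate from Theorem~\ref{existence}(i) applied to the differences $F_n-F$ (which satisfy $\SV_a^b(F_n-F)\le 2M$, again by Lemma~\ref{Helly} applied to the tails, and $\|(F_n-F)(a)\|,\|(F_n-F)(b)\|\to0$). The key point is the inequality
\[
\Big\|\int_a^b (F_n-F)\,\dd[g]\Big\|_X\le\big(\|(F_n-F)(a)\|_{L(X)}+\|(F_n-F)(b)\|_{L(X)}+\SV_a^b(F_n-F)\big)\,\|g\|_\infty,
\]
but since $\SV_a^b(F_n-F)$ need not go to zero, this crude bound is not enough by itself. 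Instead, I would approximate $g$ by a step function: given $\varepsilon>0$, since $g\in G(\ab,X)$ there is a finite division $a=\beta_0<\beta_1<\dots<\beta_m=b$ and a step function $h$, constant on each $(\beta_{i-1},\beta_i)$, with $\|g-h\|_\infty<\varepsilon$. Then write $\int_a^b(F_n-F)\,\dd[g]=\int_a^b(F_n-F)\,\dd[g-h]+\int_a^b(F_n-F)\,\dd[h]$. The first term is bounded by $(\|(F_n-F)(a)\|+\|(F_n-F)(b)\|+4M)\,\varepsilon$, which is $O(\varepsilon)$ uniformly in $n$ once $n$ is large. The second term, $\int_a^b(F_n-F)\,\dd[h]$, can be evaluated explicitly: integrating against a step function produces a finite sum of terms of the form $[(F_n-F)(\beta_i\pm)-(F_n-F)(\beta_i\mp)]$ applied to the jump values of $h$ — a finite combination of values of $F_n-F$ at finitely many points (and their one-sided limits). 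Using the pointwise convergence at this finite point set, this second term tends to $0$ as $n\to\infty$.

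The one subtlety I anticipate as the main obstacle is handling the one-sided limits of $F_n$ and $F$ that appear when integrating against a step function: the formula for $\int_a^b F\,\dd[h]$ with $h$ a step function involves $F(\beta_i+)$, $F(\beta_i-)$, which requires knowing $F,F_n$ are regulated (at least simply regulated), and requires the convergence $F_n(\beta_i\pm)\to F(\beta_i\pm)$. To sidestep this, rather than an exact step function I would use a \emph{continuous} piecewise-constant-on-closed-subintervals reduction, or — cleaner — approximate $g$ not by a step function but observe directly via the Saks-Henstock lemma and the summation-by-parts identity $\sum A_j x_j=\sum[A_j-A_{j-1}](\sum_{k\ge j}x_k)+A_0(\sum x_k)$ (the same identity used in the proof of Theorem~\ref{K-multi}) that the Riemann-Stieltjes sums for $\int_a^b(F_n-F)\,\dd[g]$ over a sufficiently fine $\delta$-fine partition are controlled by $\SV_a^b(F_n-F)\cdot\omega$ plus boundary terms, where $\omega$ is the oscillation of $g$ on small intervals. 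Choosing the partition points to include a fixed finite grid and using $\SV_a^b(F_n-F)\le 2M$ on the fine pieces while using pointwise convergence $F_n\to F$ on the fixed grid, the estimate closes. In short: the architecture is (1) existence via Theorem~\ref{existence} and Lemma~\ref{Helly}; (2) reduce to finitely many points using regularity of $g$; (3) on those finitely many points pointwise convergence is uniform; (4) control the remainder by the uniform semivariation bound times a small oscillation of $g$. Step (4), reconciling the non-vanishing semivariation of $F_n-F$ with the smallness required, is where care is needed, and the summation-by-parts identity together with $g\in G(\ab,X)$ is the tool that makes it work.
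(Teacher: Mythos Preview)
Your overall architecture is correct and is essentially the paper's proof: existence of all integrals via Lemma~\ref{Helly} and Theorem~\ref{existence}(i), then approximation of $g$ by a step function $h$, splitting $\int_a^b(F_n-F)\,\dd[g]$ into the piece against $g-h$ (controlled by the uniform semivariation bound times $\|g-h\|_\infty$) and the piece against $h$ (handled by pointwise convergence on a finite set).

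The ``main obstacle'' you anticipate, however, is a phantom. In the integral $\int_a^b F\,\dd[h]$ the function $F$ is the \emph{integrand}, not the integrator: the Riemann sums are $\sum F(\tau_j)[h(\alpha_j)-h(\alpha_{j-1})]$, so when $h$ is a step function the value of the integral involves only $F$ evaluated at the jump points of $h$, never one-sided limits of $F$. Concretely, for $\tau\in(a,b)$ and $x\in X$ one has $\int_a^b (F_n-F)\,\dd[\chi_{[a,\tau]}x]=(F(\tau)-F_n(\tau))x$, which tends to $0$ by hypothesis; the cases $\chi_{[\tau,b]}x$, $\chi_{\{a\}}x$, $\chi_{\{b\}}x$ are analogous. (One-sided limits of $F$ \emph{do} appear in $\int_a^b \dd[F]\,h$, which is why the next theorem in the paper needs the extra simply-regulated hypothesis and the stronger uniform-in-$t$ convergence; you have conflated the two situations.) Consequently your step-function argument goes through directly, no regularity of $F$ or $F_n$ is needed, and the summation-by-parts detour you sketch at the end is unnecessary.
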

\begin{proof}
By Lemma \ref{Helly} we know that $F\in SV(\ab,L(X))$, thus the existence of the integrals is guaranteed by Theorem \ref{existence} $(i)$. 

To prove the convergence, we first consider the case when $g$ is a finite step function. 
Due to the linearity of the integral, it is enough to show that \eqref{conv1} holds for functions of
the form $\chi_{[a,\tau]}x$, $\chi_{[\tau, b]}x$, $\chi_{[a]}x$ and $\chi_{[b]}x$, where $\tau\in(a,b)$ and $x\in X$.

Given $\tau\in [a,b)$ and $x\in X$, by \cite[Proposition 2.3.3]{T1} (with an obvious extension to Banach spaces-valued functions) we have
\[
  \int_a^b (F_n-F)\,\dd[\chi_{[a,\tau]}x]=F(\tau)x-F_n(\tau)x,
\]
hence \eqref{conv1} follows. 
Similarly, one can prove the equality for $\chi_{[\tau, b]}x$, $\chi_{[a]}x$ and $\chi_{[b]}x$.

Now, assuming $g\in G(\ab,X)$ and given $\varepsilon>0$, there exists a finite step function $\varphi:\ab\to X$ such that
$\|g-\varphi\|_\infty<\varepsilon$ (see \cite[Theorem I.3.1]{H}). Let $n_0\in\N$ be such that
\[
\|(F_n-F)(a)\|+\|(F_n-F)(b)\|<M \quad\mbox{and}\quad \Big\|\int_a^b (F_n-F)\,d[\varphi]\Big\|<\varepsilon
\]
for $n>n_0$. These inequalities, together with \eqref{linear} and Theorem \ref{existence}, imply that 
\begin{align*}
&\Big\|\int_a^b(F_n-F)\,\dd[g]\Big\|_X
\\&\qquad
\leq \Big\|\int_a^b(F_n-F)\,\dd[g-\varphi]\Big\|_X+\Big\|\int_a^b(F_n-F)\,\dd[\varphi]\Big\|_X
\\&\qquad
\leq\big(\|(F_n-F)(a)\|_X+\|(F_n-F)(b)\|_X+\SV_a^b(F_n-F)\big)\,\|g-\varphi\|_\infty+\varepsilon
\\&\qquad<\big(M+\SV_a^b(F_n)+\SV_a^b(F)\big)\,\varepsilon+\varepsilon<\varepsilon(3\,M+1)
\end{align*}
for every $n>n_0$, which proves \eqref{conv1}.
\end{proof}

\smallskip

We remark that in \cite{Sch-1992} the convergence result above is proved for real-valued functions of bounded variation. 

Still a Helly type result, the following theorem concerns integrals of the form $\int_a^b \dd[F]\,g$.

\smallskip

\begin{theorem}
Let $F:\ab\to L(X)$, $\{F_n\}_n\subset SV(\ab,L(X))\cap SG(\ab,L(X))$ and a 
constant $M>0$ be such that
\[
\SV_a^b(F_n)\leq M\quad\mbox{for every \ }n\in\N,
\]
and
\begin{equation}\label{lim-conv}
\lim_{n\to\infty}\Big(\sup_{t\in\ab}\|F_n(t)x-F(t)x\|_{X}\Big)=0\quad\mbox{for every \ }x\in X.
\end{equation}
If $g\in G(\ab,X)$, then the integrals $\int_a^b \dd[F]\,g$ and $\int_a^b \dd[F_n]\,g$, $n\in\N,$ exist and
\begin{equation}\label{conv2}
   \lim_{n\to\infty}
   \int_a^b \dd[F_n]\,g=\int_a^b \dd[F]\,g.
\end{equation}
\end{theorem}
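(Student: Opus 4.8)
The plan is to follow closely the pattern of the preceding theorem, the new ingredient being that the hypothesis \eqref{lim-conv} provides \emph{uniform} (in $t$) strong convergence of $F_n$ to $F$; this is exactly the strength needed because in $\int_a^b\dd[F]\,g$ the function $F$ plays the role of the integrator and the integral detects the behaviour of $F$ near the discontinuities of $g$, not merely at the sampling tags.

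First I would dispose of existence. By \eqref{lim-conv} we have in particular $F_n(t)x\to F(t)x$ for every $t\in\ab$ and $x\in X$, so Lemma~\ref{Helly} gives $F\in SV(\ab,L(X))$ with $\SV_a^b(F)\le M$. Moreover, for each fixed $x\in X$ the function $t\mapsto F(t)x$ is, by \eqref{lim-conv}, the uniform limit on $\ab$ of the regulated functions $t\mapsto F_n(t)x$, hence it is regulated; thus $F\in SG(\ab,L(X))$. Consequently, by Theorem~\ref{existence}$\,(ii)$ the integrals $\int_a^b\dd[F]\,g$ and $\int_a^b\dd[F_n]\,g$ exist. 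The same applies to $F_n-F$, for which $\SV_a^b(F_n-F)\le\SV_a^b(F_n)+\SV_a^b(F)\le 2M$ and $F_n-F\in SG(\ab,L(X))$; so $\int_a^b\dd[F_n-F]\,h$ exists for every $h\in G(\ab,X)$, and by linearity of the integral in the integrator one has $\int_a^b\dd[F_n-F]\,h=\int_a^b\dd[F_n]\,h-\int_a^b\dd[F]\,h$.

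Next I would reduce to step functions. Given $\varepsilon>0$, by \cite[Theorem~I.3.1]{H} there is a finite step function $\varphi:\ab\to X$ with $\|g-\varphi\|_\infty<\varepsilon$, and Theorem~\ref{existence}$\,(ii)$ yields
\[
\Big\|\int_a^b\dd[F_n-F]\,(g-\varphi)\Big\|_X\le\SV_a^b(F_n-F)\,\|g-\varphi\|_\infty\le 2M\,\varepsilon .
\]
It therefore suffices to show $\int_a^b\dd[F_n-F]\,\varphi\to 0$ for every finite step function $\varphi$, and by linearity in the integrand it is enough to take $\varphi=\chi_E\,x$, where $E\subset\ab$ is a subinterval (of any type) or a one-point set and $x\in X$. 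Set $\eta_n=\sup_{t\in\ab}\|(F_n-F)(t)x\|_X$, so that $\eta_n\to 0$ by \eqref{lim-conv}. For any partition $P=(\tau_j,[\alpha_{j-1},\alpha_j])$ of $\ab$ the corresponding integral sum is $\sum_{j=1}^{\nu(P)}[(F_n-F)(\alpha_j)-(F_n-F)(\alpha_{j-1})]\,\chi_E(\tau_j)\,x$; since the tags satisfy $\tau_j\le\alpha_j\le\tau_{j+1}$, the set $\{j:\tau_j\in E\}$ is a (possibly empty) block of consecutive indices $\{k,\dots,m\}$, so this sum telescopes to $[(F_n-F)(\alpha_m)-(F_n-F)(\alpha_{k-1})]\,x$ and thus has norm at most $2\eta_n$. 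As every integral sum for $\dd[F_n-F]$ against $\chi_E x$ is bounded by $2\eta_n$, so is the integral: $\big\|\int_a^b\dd[F_n-F]\,\chi_E x\big\|_X\le 2\eta_n\to 0$. (Alternatively, one may invoke the explicit values of these integrals, obtained as in \cite[Proposition~2.3.3]{T1}, e.g.\ $\int_a^b\dd[F]\,\chi_{[a,\tau]}x=[F(\tau\dot{+})-F(a)]\,x$, the simple one-sided limit existing because $F\in SG(\ab,L(X))$ and satisfying $\|(F_n-F)(\tau\dot{+})x\|_X\le\eta_n$.) Combining the two estimates, for all sufficiently large $n$ one obtains $\big\|\int_a^b\dd[F_n-F]\,g\big\|_X\le 2M\,\varepsilon+2\eta_n$, which, $\varepsilon>0$ being arbitrary, proves \eqref{conv2}.

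The hard part will be the passage from pointwise to uniform convergence. In the companion result for $\int_a^bF_n\,\dd[g]$ the approximating functions are sampled at the tags, so pointwise convergence of $F_n$ suffices; here $F_n$ appears as the integrator, the integrals of the elementary step functions involve the values (and one-sided simple limits) of $F_n$ near the jumps of $g$, and controlling them uniformly — equivalently, securing $F\in SG(\ab,L(X))$ and the bound $2\eta_n$ on the telescoped integral sums — is exactly where \eqref{lim-conv} is used in its full, uniform strength. Everything else is routine once Lemma~\ref{Helly} and Theorem~\ref{existence} are available.
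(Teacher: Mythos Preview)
Your proof is correct and follows the same overall architecture as the paper's: establish $F\in SV\cap SG$ via Lemma~\ref{Helly} and the uniform-limit-of-regulated argument, invoke Theorem~\ref{existence}\,(ii) for existence, reduce to step functions, and finish by approximating a general $g\in G(\ab,X)$ with the estimate $\SV_a^b(F_n-F)\le 2M$.

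The one notable difference is your treatment of the elementary building blocks $\chi_E\,x$. The paper computes the integral explicitly via \cite[Proposition~14]{Sch1}, obtaining expressions involving the one-sided simple limits $F_n(\tau\dot{-})\tilde{x}$ and $F(\tau\dot{-})\tilde{x}$, and then runs an $\varepsilon/3$ argument (uniform convergence plus regularity on both sides) to show these converge. Your telescoping bound on the integral sums is more direct: once you observe that the tags are nondecreasing, so that $\{j:\tau_j\in E\}$ is a block of consecutive indices, every integral sum has norm at most $2\eta_n$, and hence so does the integral. This bypasses both the explicit formula and the $\varepsilon/3$ estimate, and it works uniformly over all interval types and singletons at once. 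The paper's route, on the other hand, makes explicit exactly which values of $F$ the integral sees (the simple one-sided limits), which is informative in its own right and is the content of your parenthetical alternative.
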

\begin{proof}
Given $x\in X$, for each $n\in\N$ put $(F_n)_x:t\in\ab\longmapsto F_n(t)x\in X$. By \eqref{lim-conv} 
it follows that the sequence of regulated functions 
$\{(F_n)_x\}_n$ converges uniformly in $\ab$ to $F_x:t\in\ab\longmapsto F(t)x\in X$. Hence, by \cite[I.3.5]{H} the function $F_x$ is regulated 
and, since it holds for each $x\in X$, we have $F\in SG(\ab,L(X))$. Noting that $F\in SV(\ab,L(X))$ (see Lemma \ref{Helly}), 
the existence of the integral $\int_a^b \dd[F]\,g$, as well as the existence of $\int_a^b \dd[F_n]\,g$, $n\in\N,$ yields from 
Theorem \ref{existence} $(ii)$.

In order to prove \eqref{conv2}, we first consider the case when $g(t)=\chi_{[a,\tau]}(t)\tilde{x}$ for $t\in\ab$, where 
$\tau\in (a,b)$ and $\tilde{x}\in X$ are arbitrarily fixed. For each $n\in\N$ we have
\[
  \int_a^b \dd[F_n-F]g=\lim_{s\to \tau-} F_n(s)\tilde{x}-\lim_{s\to \tau-}F(s)\tilde{x}-[F_n(a)-F(a)]\tilde{x},
\]
(cf. \cite[Proposition 14]{Sch1}) or equivalently,
\begin{equation}\label{s1}
  \int_a^b \dd[F_n-F]g= F_n(\tau\dot{-})\tilde{x}- F(\tau\dot{-})\tilde{x}
  -[F_n(a)-F(a)]\tilde{x},
\end{equation}
where $F_n(\tau\dot{-}),\,F(\tau\dot{-})\in L(X)$ are operators satisfying
\begin{equation*}\label{simply}
\lim_{s\to \tau-} F_n(s)x=F_n(\tau\dot{-})x\quad\mbox{and}\quad\lim_{s\to \tau-} F(s)x=F(\tau\dot{-})x
\end{equation*}
for every $x\in X$. 
Given $\varepsilon>0$, by \eqref{lim-conv} there exists $n_0\in\N$ such that
\begin{equation}\label{s2}
\|[F_n(t)-F(t)]\tilde{x}\|_X<\frac{\varepsilon}{3}\quad \mbox{for \ }n\geq n_0 \mbox{ \ and \ }t\in\ab.
\end{equation}
Fixed $n\geq n_0$, we can choose $\delta>0$ such that 
\begin{equation}\label{s3}
\|F_n(\tau\dot{-})\tilde{x}-F_n(s)\tilde{x}\|_X\leq \frac{\varepsilon}{3}
\quad\mbox{and}\quad
\|F_n(\tau\dot{-})\tilde{x}-F_n(s)\tilde{x}\|_X\leq \frac{\varepsilon}{3}.
\end{equation}
Let us choose $s\in(\tau-\delta,\tau)$. From \eqref{s2} and \eqref{s3} it follows that
\begin{align*}
&\|F_n(\tau\dot{-})\tilde{x}- F(\tau\dot{-})\tilde{x}\|_X
\\&\qquad\leq
\|F_n(\tau\dot{-})\tilde{x}-F_n(s)\tilde{x}\|_X
+\|F_n(s)\tilde{x}-F(s)\tilde{x}\|_X+\|F(s)\tilde{x}-F(\tau\dot{-})\tilde{x}\|_X<\varepsilon,
\end{align*}
which together with \eqref{s2} applied to $t=a$, shows that the integral in \eqref{s1} tends to zero. 
With similar argument we can prove that \eqref{conv2} holds when $g$ is a function of the form 
$\chi_{[\tau, b]}x$, $\chi_{[a]}x$ and $\chi_{[b]}x$ for $\tau\in (a,b)$ and $x\in X$. As a consequence of the linearity of 
the integral we conclude that \eqref{conv2} is valid if $g$ is a step function.

Now, assuming that $g\in G(\ab,X)$ and given $\varepsilon>0$, let $\varphi:\ab\to X$ be a finite step function such that
$\|g-\varphi\|_\infty<\varepsilon$ (see \cite[Theorem I.3.1]{H}). Thus, by Theorem \ref{existence} $(i)$ we have 
\begin{align*}
\Big\|\int_a^b\dd[F_n-F]\,g\Big\|_X
&\leq \Big\|\int_a^b\dd[F_n-F]\,(g-\varphi)\Big\|_X+\Big\|\int_a^b\dd[F_n-F]\,\varphi\Big\|_X
\\& \leq\SV_a^b(F_n-F)\|g-\varphi\|_\infty+\Big\|\int_a^b\dd[F_n-F]\,\varphi\Big\|_X
\\& \leq 2\,M\varepsilon+\Big\|\int_a^b\dd[F_n-F]\,\varphi\Big\|_X.
\end{align*}
Since $\varphi$ is a step function, the result now follows from first part of the proof.
\end{proof}

\smallskip

Similar convergence results have been proved in \cite{MT-2} and \cite{MS} in the frame of functions of bounded variation.

\medskip

In literature the notion of variation is sometimes described by the means of different integrals of the Stieltjes type. 
In \cite{BK}, using the Young integral on Hilbert spaces, not only a characterization for the norm $\|\cdot\|_{BV}$ is presented 
but also the notion of essential variation is treated. 
Dealing with the semivariation and the interior integral (i.e. the Dushnik integral), it is worth highlighting \cite[Corollary I.5.2]{H}.

Inspired by those results, we present here a characterization of the semivariation via the abstract Kurzweil-Stieltjes integral. 
To this end, we will need the following estimates whose proofs are quite similar to \cite[Lemma 3.1]{MT}.

\smallskip

\begin{lemma}
Let $F:\ab\to L(X)$ and $g:\ab\to X$ be given. For every partition $P=(\tau_j,[\alpha_{j{-}1},\alpha_j])$ of $\ab$ we have
\[
   \Bigg\|F(b)\,g(b)-\sum_{j=1}^{\nu(P)}F(\tau_j)[g(\alpha_{j})-g(\alpha_{j{-}1})]\Bigg\|_X\le \|F(a)\,g(a)\|_X+\|g\|_{\infty}\,\SV_a^b(F),
\]
Furthermore, if $\int_a^bF\,\dd[g]$ exists then 
\begin{equation}\label{ineq2}
   \Big\|F(b)\,g(b)-\int_a^b F\,\dd[g]\Big\|_X\le \|F(a)\,g(a)\|_X+\|g\|_{\infty}\,\SV_a^b(F).
\end{equation}
\end{lemma}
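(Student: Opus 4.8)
The plan is to prove the first (finite, Riemann-type) inequality directly and then obtain the second one by a limiting argument using the existence of the Kurzweil-Stieltjes integral. First I would fix a partition $P=(\tau_j,[\alpha_{j-1},\alpha_j])$ of $\ab$ and rewrite the ``error'' $F(b)g(b)-\sum_{j=1}^{\nu(P)}F(\tau_j)[g(\alpha_j)-g(\alpha_{j-1})]$ by telescoping. The natural identity to use is the discrete Abel-type summation already invoked in the proof of Theorem \ref{K-multi}: with $A_j=F(\tau_j)$, $A_0=F(a)$ (so $\tau_0=a$) and $x_j=g(\alpha_j)-g(\alpha_{j-1})$, one has
\[
\sum_{j=1}^{\nu(P)}F(\tau_j)\,[g(\alpha_j)-g(\alpha_{j-1})]
=\sum_{j=1}^{\nu(P)}[F(\tau_j)-F(\tau_{j-1})]\Big(\sum_{k=j}^{\nu(P)}x_k\Big)+F(a)\Big(\sum_{k=1}^{\nu(P)}x_k\Big).
\]
Since the inner sums telescope, $\sum_{k=j}^{\nu(P)}x_k=g(b)-g(\alpha_{j-1})$ and $\sum_{k=1}^{\nu(P)}x_k=g(b)-g(a)$, so the left-hand quantity in the lemma becomes
\[
F(b)g(b)-F(a)\,[g(b)-g(a)]-\sum_{j=1}^{\nu(P)}[F(\tau_j)-F(\tau_{j-1})]\,[g(b)-g(\alpha_{j-1})].
\]
The awkward leftover term $F(b)g(b)-F(a)g(b)$ equals $[F(b)-F(a)]g(b)$; I would fold this into the telescoping sum by noting that $\{\tau_0,\tau_1,\dots,\tau_{\nu(P)},b\}$ (with an extra node at $b$ if $\tau_{\nu(P)}<b$) is again a division of $\ab$, so the whole expression is of the form $F(a)g(a)+\sum[F(\beta_i)-F(\beta_{i-1})]\,w_i$ with $\|w_i\|_X\le\|g\|_\infty$.

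Once the error is written as $F(a)g(a)$ plus a single sum $\sum_i[F(\beta_i)-F(\beta_{i-1})]\,w_i$ over a division of $\ab$ with coefficients $w_i\in X$ satisfying $\|w_i\|_X\le\|g\|_\infty$, the definition of semivariation finishes it: factoring out $\|g\|_\infty$ and applying $V(F,D)\le\SV_a^b(F)$ gives
\[
\Big\|F(b)g(b)-\sum_{j=1}^{\nu(P)}F(\tau_j)[g(\alpha_j)-g(\alpha_{j-1})]\Big\|_X
\le\|F(a)g(a)\|_X+\|g\|_\infty\,\SV_a^b(F),
\]
which is the first assertion, valid for every partition $P$.

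For the second assertion I would assume $\int_a^b F\,\dd[g]$ exists. Then, given $\varepsilon>0$, choose a gauge $\delta$ so that every $\delta$-fine partition $P$ has its Riemann-Stieltjes sum within $\varepsilon$ of the integral; picking one such $P$ (Cousin's lemma guarantees existence) and combining with the first inequality via the triangle inequality yields
\[
\Big\|F(b)g(b)-\int_a^b F\,\dd[g]\Big\|_X\le\varepsilon+\|F(a)g(a)\|_X+\|g\|_\infty\,\SV_a^b(F),
\]
and letting $\varepsilon\to 0+$ gives \eqref{ineq2}. I expect the only genuinely delicate point to be the bookkeeping in the telescoping step — keeping track of the extra node at $b$, making sure the tags $\tau_j$ form an admissible division, and checking that every coefficient $w_i$ really has norm at most $\|g\|_\infty$ — since everything after that is a direct appeal to Definition \ref{def-SV} and the definition of the Kurzweil integral; no result beyond those (and Cousin's lemma, already cited) is needed.
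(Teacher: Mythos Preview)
Your proposal is correct and follows the approach the paper implicitly invokes (it omits the proof, citing \cite[Lemma 3.1]{MT}, and the Abel-type identity you use is exactly the one already displayed in the proof of Theorem~\ref{K-multi}). The only point worth making explicit when you write it out is that the tags satisfy $\tau_1\le\tau_2\le\cdots\le\tau_{\nu(P)}$ but need not be strictly increasing; repeated tags contribute zero terms, so after discarding them the set $\{a,\tau_1,\dots,\tau_{\nu(P)},b\}$ is a genuine division and your folded sum has coefficients $g(\alpha_{j-1})$ and $g(b)$, all of norm at most $\|g\|_\infty$, exactly as you claim.
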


\smallskip

Now we present the main result of this section. In what follows, $S_L(\ab,X)$ denotes the set of all finite step 
functions $g:\ab\to X$ which are left-continuous on $(a,b]$ and such that $g(a)=0$.

\smallskip

\begin{theorem}\label{SV-theo}
If $F\in SV(\ab,L(X))$, then
\[
\SV_a^b(F)=\sup\left\{\Big\|F(b)\,g(b)-\int_a^bF\,\dd[g]\Big\|_X\,;\,g\in S_L(\ab,X),\,\|g\|_\infty\leq 1 \right\}.
\]
\end{theorem}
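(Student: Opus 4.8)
The plan is to prove the identity by establishing two inequalities. For the ``$\geq$'' direction, I would use the estimate \eqref{ineq2} from the preceding lemma: for any $g\in S_L(\ab,X)$ with $\|g\|_\infty\leq 1$ and $g(a)=0$, we have $F(a)g(a)=0$, so \eqref{ineq2} gives $\|F(b)g(b)-\int_a^bF\,\dd[g]\|_X\leq \SV_a^b(F)$. Taking the supremum over all such $g$ yields one inequality immediately. (Note that the integral $\int_a^bF\,\dd[g]$ exists for every such $g$ by Theorem \ref{existence}$(i)$, since step functions are regulated.)

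For the reverse inequality ``$\leq$'', the idea is to realize $V(F,D)$ — the defining quantity for the semivariation — via an appropriate choice of left-continuous step function. Fix a division $D=\{\alpha_0,\dots,\alpha_{\nu(D)}\}$ and points $x_j\in X$ with $\|x_j\|_X\leq 1$. I would set $g$ to be the step function which equals a suitable partial-sum-type combination of the $x_j$ on each subinterval, chosen so that the Riemann--Stieltjes-type sum $\sum_j F(\tau_j)[g(\alpha_j)-g(\alpha_{j-1})]$ reproduces $\sum_j [F(\alpha_j)-F(\alpha_{j-1})]x_j$ (up to the boundary term $F(b)g(b)$). Concretely, taking $g$ constant on $(\alpha_{j-1},\alpha_j]$ with value determined so that the jumps $g(\alpha_j)-g(\alpha_{j-1})$ are exactly $x_j$ (for instance $g(t)=\sum_{k:\,\alpha_k\leq t,\,k\geq 1}x_k$ on appropriate pieces, adjusted to be left-continuous with $g(a)=0$), one needs to check $\|g\|_\infty\leq 1$ — which is the delicate point, since partial sums of unit vectors need not have norm $\leq 1$. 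The remedy is that the integral $\int_a^bF\,\dd[g]$ for a left-continuous step function can be computed explicitly (via \cite[Proposition 2.3.3]{T1} or the summation-by-parts identity used in the proof of Theorem \ref{K-multi}), and one should instead reverse the roles: given the target expression $\sum_j[F(\alpha_j)-F(\alpha_{j-1})]x_j$, use the abstract summation-by-parts formula
\[
\sum_{j=1}^m A_j\,x_j=\sum_{j=1}^m[A_j-A_{j-1}]\Big(\sum_{k=j}^m x_k\Big)+A_0\Big(\sum_{k=1}^m x_k\Big)
\]
in reverse to see that $\sum_j[F(\alpha_j)-F(\alpha_{j-1})]y_j$ with $y_j=\sum_{k\geq j}x_k$ equals $\sum_j F(\alpha_j)[y_j-y_{j+1}]-F(b)y_{\nu(D)+1}+\dots$; this shows the correct $g$ to use has jump data $x_j$ rather than partial sums, so in fact $\|g\|_\infty\leq 1$ does hold once $g$ is built with jumps equal to the individual $x_j$. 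I would organize this carefully: define $g$ on $\ab$ by $g(a)=0$ and $g(t)=x_j$ for $t\in(\alpha_{j-1},\alpha_j]$; then $g\in S_L(\ab,X)$, $\|g\|_\infty\leq 1$, and a direct computation of $\int_a^bF\,\dd[g]$ (the integral of $F$ against a left-continuous step function is a finite sum of the form $\sum_j[F(\alpha_j)g(\alpha_j)-\dots]$, available from the explicit formulas for Kurzweil--Stieltjes integrals of step functions) gives $F(b)g(b)-\int_a^bF\,\dd[g]=\sum_{j=1}^{\nu(D)}[F(\alpha_j)-F(\alpha_{j-1})]x_j$ up to sign.

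The main obstacle I anticipate is getting the explicit value of $\int_a^bF\,\dd[g]$ for a left-continuous step function right — in particular tracking the boundary contributions at $a$ and $b$ and the one-sided limits, so that the telescoping produces exactly $\sum_j[F(\alpha_j)-F(\alpha_{j-1})]x_j$ and not an off-by-one shifted version. Once that computation is pinned down, taking the supremum first over all $x_j$ with $\|x_j\|_X\leq1$ (giving $V(F,D)$) and then over all divisions $D\in\mathcal D\ab$ yields $\SV_a^b(F)\leq\sup\{\,\|F(b)g(b)-\int_a^bF\,\dd[g]\|_X : g\in S_L(\ab,X),\,\|g\|_\infty\leq1\,\}$, which combined with the first inequality completes the proof. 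A secondary point to be careful about: one must confirm that the supremum on the right is actually attained in the limit by step functions associated to divisions, i.e.\ that no left-continuous step function can exceed $\SV_a^b(F)$, but that is precisely what \eqref{ineq2} guarantees, so no extra work is needed there.
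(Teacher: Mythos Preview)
Your proposal is correct and follows essentially the same route as the paper: the upper bound comes directly from \eqref{ineq2} (using $g(a)=0$), and for the lower bound the paper chooses precisely the step function you eventually settle on, namely $\tilde g(t)=\sum_{j}\chi_{(\alpha_{j-1},\alpha_j]}(t)\,x_j$, then computes $\int_a^bF\,\dd[\tilde g]$ explicitly (via \cite[Proposition~14]{Sch1}) to obtain $F(b)\tilde g(b)-\int_a^bF\,\dd[\tilde g]=\sum_{j=1}^{\nu(D)}[F(\alpha_j)-F(\alpha_{j-1})]x_j$ exactly (no sign ambiguity). Your detour through partial sums and the summation-by-parts identity is unnecessary---once you pick $g$ with value $x_j$ on $(\alpha_{j-1},\alpha_j]$, the integral computation is a one-line telescoping and the $\|g\|_\infty\le1$ condition is immediate.
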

\begin{proof}
At first, note that by \eqref{ineq2} $\SV_a^b(F)$ is an upper bound to the set
\[
\mathcal A:=
\left\{\Big\|F(b)\,g(b)-\int_a^bF\,\dd[g]\Big\|_X\,;\,g\in S_L(\ab,X),\,g(a)=0\mbox{ \ and \ }\|g\|_\infty\leq 1 \right\}.
\]
To conclude the proof it is enough to show that $\SV_a^b(F)\leq\sup \mathcal A$.

Let $\varepsilon>0$ be given. Then, there exist a division $D\,{=}\,\{\alpha_0,\alpha_1,\dots,\alpha_m\}$ of $\ab$ 
and $x_j\in X$, $j=1,\ldots,m$  
with $\|x_j\|\leq 1$ such that
\[
\SV_a^b (F)- \varepsilon<\Big\|\sum_{j=1}^m[F(\alpha_j)\,{-}\,F(\alpha_{j-1})]\,x_j\Big\|_X.
\]
Let $\tilde{g}:\ab\to X$ be the function given by 
\[
\tilde{g}(t)=\sum_{j=1}^m\chi_{(\alpha_{j-1},\alpha_j]}(t)\,x_j\quad\mbox{for \ }t\in\ab.
\]
Thus, $\tilde{g}$ is a left continuous step function with $\tilde{g}(a)=0$ and $\|\tilde{g}\|_\infty\leq 1$, that is, $\tilde{g}\in S_L(\ab,X)$. 
Calculating the integral $\int_a^bF\,\dd[\tilde{g}]$, we have
\[
\int_a^bF\,\dd[\tilde{g}]
=-\sum_{j=1}^{m-1}[F(\alpha_j)-F(\alpha_{j-1})]\,x_j+F(\alpha_{m-1})\,x_m
\]
(see \cite[Proposition 14]{Sch1}). Therefore,
\[
\SV_a^b (F)- \varepsilon<\Big\|\sum_{j=1}^m[F(\alpha_j)\,{-}\,F(\alpha_{j-1})]\,x_j\Big\|_X=\Big\|F(b)\,\tilde{g}(b)- \int_a^bF\,\dd[\tilde{g}]\Big\|_X\leq\sup \mathcal A.
\]
Since $\varepsilon>0$ is arbitrary, the result follows.
\end{proof}

\appendix\section{Appendix: Series in Banach space}

\begin{definition}
Let $x_n\in X$ for $n\in\N$. We say that:
\begin{enumerate}
\item The series $\dis\sum_{n=1}^\infty x_n$ is convergent if the sequence of its partial sums 
$s_n=\dis\sum_{k=1}^nx_k$ converges in $X$. 
\item The series $\dis\sum_{n=1}^\infty x_n$ is absolutely convergent if 
$\dis\sum_{n=1}^\infty\|x_n\|_X<\infty$. 
\item The series $\dis\sum_{n=1}^\infty x_n$ is unconditionally convergent if the series $\dis\sum_{n=1}^\infty x_{\pi(n)}$ converges in $X$ 
for any permutation $\pi$ of $\N$.
\end{enumerate}
\end{definition}

\smallskip

\begin{theorem}
If $\dis\sum_{n=1}^\infty x_n$ is unconditionally convergent, then all rearrangements have the same sum.
\end{theorem}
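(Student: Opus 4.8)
The plan is to show that if $\sum_{n=1}^\infty x_n$ is unconditionally convergent with sum $s$, then for every permutation $\pi$ of $\N$ the rearranged series $\sum_{n=1}^\infty x_{\pi(n)}$ also has sum $s$. By hypothesis we already know each rearrangement converges; only the equality of the limits is at stake. I would argue by contradiction: suppose some permutation $\pi$ gives $\sum_{n=1}^\infty x_{\pi(n)} = s' \neq s$, and set $\varepsilon = \tfrac{1}{3}\|s - s'\|_X > 0$.

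\textbf{Main steps.} First I would record the Cauchy-type consequence of unconditional convergence: there exists $N_0\in\N$ such that for every \emph{finite} set $A\subset\N$ with $\min A > N_0$ one has $\bigl\|\sum_{k\in A} x_k\bigr\|_X < \varepsilon$. (This follows because if no such $N_0$ existed one could extract disjoint finite blocks each of norm $\ge\varepsilon$ and splice them into a single rearrangement violating convergence — the standard ``unconditional $\Rightarrow$ bounded multiplier / subseries convergent'' mechanism; alternatively one invokes that unconditional convergence implies the partial sums over initial segments of \emph{every} rearrangement form a Cauchy net uniformly, which is exactly what is needed.) Second, fix $M\ge N_0$ large enough that $\bigl\|\sum_{k=1}^{M} x_k - s\bigr\|_X < \varepsilon$. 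Third, since $\pi$ is a bijection, choose $L$ so large that $\{1,2,\dots,M\}\subset\{\pi(1),\dots,\pi(L)\}$ and also $\bigl\|\sum_{n=1}^{L} x_{\pi(n)} - s'\bigr\|_X < \varepsilon$ (enlarging $L$ further if needed). Fourth, compare the two finite sums: the difference $\sum_{n=1}^{L} x_{\pi(n)} - \sum_{k=1}^{M} x_k$ equals $\sum_{k\in A} x_k$ where $A = \{\pi(1),\dots,\pi(L)\}\setminus\{1,\dots,M\}$, a finite set of indices all exceeding $N_0$; hence its norm is $<\varepsilon$ by the first step. Fifth, chain the three estimates via the triangle inequality:
\[
\|s - s'\|_X \le \Bigl\|s - \sum_{k=1}^M x_k\Bigr\|_X + \Bigl\|\sum_{k=1}^M x_k - \sum_{n=1}^L x_{\pi(n)}\Bigr\|_X + \Bigl\|\sum_{n=1}^L x_{\pi(n)} - s'\Bigr\|_X < 3\varepsilon = \|s-s'\|_X,
\]
a contradiction. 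Therefore $s' = s$.

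\textbf{Expected obstacle.} The only non-routine point is the first step — extracting the uniform ``tail'' estimate $\bigl\|\sum_{k\in A}x_k\bigr\|_X<\varepsilon$ for all finite $A$ with $\min A>N_0$ purely from the hypothesis that \emph{every} rearrangement converges (without yet knowing the stronger equivalent characterizations of unconditional convergence). The clean way is a sliding-hump argument: if the estimate failed for every $N_0$, build inductively disjoint finite blocks $A_1<A_2<\cdots$ (meaning $\max A_i<\min A_{i+1}$) with $\bigl\|\sum_{k\in A_i}x_k\bigr\|_X\ge\varepsilon$, then construct a permutation $\pi$ that lists the elements of $A_1$ consecutively, then $A_2$ consecutively, and so on, interleaving the remaining indices; along this rearrangement the partial sums fail to be Cauchy (consecutive block sums have norm $\ge\varepsilon$), contradicting convergence of that rearrangement. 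Once this lemma is in hand the rest is the elementary three-$\varepsilon$ comparison above. If the paper prefers, this lemma can instead be cited as a standard fact about unconditionally convergent series in Banach spaces, shortening the proof to just the comparison argument.
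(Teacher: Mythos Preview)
Your argument is correct. The paper does not actually prove this theorem: it is stated in the Appendix as a background fact with only the citation ``(See \cite[Theorem 1.3.1]{KK})'' in lieu of a proof. So there is no in-paper proof to compare against; you have supplied what the paper outsources.

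Your route is the standard one and is sound. The only place that needs care is, as you flagged, the tail-control lemma: from the bare hypothesis that every rearrangement converges one must extract an $N_0$ with $\bigl\|\sum_{k\in A}x_k\bigr\|_X<\varepsilon$ for all finite $A$ with $\min A>N_0$. Your sliding-hump sketch works: if the lemma failed one builds disjoint finite blocks $A_1<A_2<\cdots$ with block-sum norm $\ge\varepsilon$, and any permutation listing each $A_i$ as a consecutive run (filling in the complementary indices in between) has non-Cauchy partial sums, contradicting unconditional convergence. Once that lemma is in hand, your three-$\varepsilon$ comparison is routine and the contradiction $\|s-s'\|_X<3\varepsilon=\|s-s'\|_X$ is clean. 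If you want to shorten the write-up to match the paper's style, you could simply cite the lemma (it is the equivalence of unconditional and subseries convergence, also in \cite{KK} or \cite{Al}) and keep only the comparison step.
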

\noindent(See \cite[Theorem 1.3.1]{KK})

\smallskip

\begin{theorem}\label{T-1.3.2}
For series $\dis\sum_{n=1}^\infty x_n$ in $X$ the following conditions are equivalent:
\begin{enumerate}[(a)]
\item the series is unconditionally convergent;
\item for any bounded sequence $\{\alpha_n\}_n$ in $\R$, the series $\dis\sum_{n=1}^\infty \alpha_n x_n$ converges in $X$.
\end{enumerate}
\end{theorem}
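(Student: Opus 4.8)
The plan is to route both implications through an intermediate Cauchy-type condition on finite partial sums, which for brevity I denote by (FSC): \emph{for every $\varepsilon>0$ there is a finite set $F_0\subset\N$ such that $\big\|\sum_{n\in F}x_n\big\|_X<\varepsilon$ for every finite $F\subset\N$ with $F\cap F_0=\emptyset$}. The first observation is that (FSC) is precisely the Cauchy condition for the net $\big(\sum_{n\in F}x_n\big)_F$ indexed by the finite subsets of $\N$ directed by inclusion; since $X$ is complete, (FSC) is therefore equivalent to the convergence of this net to some $s\in X$. Granting net convergence, the partial sums of any rearrangement $\sum_n x_{\pi(n)}$ form a cofinal subnet and hence converge (to $s$), so (FSC) implies~(a).

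For the converse (a)$\Rightarrow$(FSC) I argue contrapositively. If (FSC) fails, there is $\varepsilon_0>0$ and, by repeatedly enlarging the excluded finite set so as to push the next block past the previous one, pairwise disjoint finite blocks $F_1,F_2,\dots$ with $\max F_k<\min F_{k+1}$ and $\big\|\sum_{n\in F_k}x_n\big\|_X\ge\varepsilon_0$ for all $k$. One then builds a bijection $\pi$ of $\N$ by writing out, in order, the elements of $F_1$, then the elements of $\{1,\dots,\max F_1\}$ not yet used, then $F_2$, then the still unused integers up to $\max F_2$, and so on; because the blocks march off to infinity this list exhausts $\N$ and each $F_k$ occupies a run of consecutive positions. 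Consequently $\sum_n x_{\pi(n)}$ violates the ordinary Cauchy criterion, so it diverges, contradicting~(a).

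The connection between (b) and (FSC) uses the same blocks. If (FSC) fails, put $A=\bigcup_k F_k$ and take $\alpha_n=1$ for $n\in A$ and $\alpha_n=0$ otherwise; then $\sum_n\alpha_nx_n=\sum_{n\in A}x_n$ converges by~(b), yet --- since each $F_k$ is a block of consecutive elements of $A$ --- a suitable difference of two partial sums of this series equals $\sum_{n\in F_k}x_n$, of norm at least $\varepsilon_0$, which contradicts the Cauchy property of those partial sums; hence (b)$\Rightarrow$(FSC). Conversely, assume (FSC) and fix a bounded real sequence $\{\alpha_n\}_n$, which after rescaling we may take to satisfy $|\alpha_n|\le 1$. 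Given $\varepsilon>0$, choose $F_0$ as in (FSC) with threshold $\varepsilon/2$ and set $N_0=\max F_0$. For $N_0<p\le q$ every subset of $\{p,\dots,q\}$ is disjoint from $F_0$, so it remains to bound $\big\|\sum_{n=p}^q\alpha_nx_n\big\|_X$ by $\sup_{A\subseteq\{p,\dots,q\}}\big\|\sum_{n\in A}x_n\big\|_X$. Splitting $\{p,\dots,q\}$ according to the sign of $\alpha_n$ and, inside each part, reordering the finitely many indices so that the coefficients decrease, one application of Abel summation gives
\[
   \Big\|\sum_{n=p}^q\alpha_nx_n\Big\|_X
   \;\le\;2\,\sup_{A\subseteq\{p,\dots,q\}}\Big\|\sum_{n\in A}x_n\Big\|_X
   \;<\;\varepsilon .
\]
Thus $\big(\sum_{n=1}^N\alpha_nx_n\big)_N$ is Cauchy and, by completeness of $X$, converges; this is~(b).

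Putting the pieces together, (a)$\Leftrightarrow$(FSC)$\Leftrightarrow$(b), which is the claim. The step I expect to demand the most care is the construction of the divergent rearrangement in (a)$\Rightarrow$(FSC): one must verify that interleaving the extracted blocks $F_k$ with the remaining integers really yields a bijection of $\N$ in which every $F_k$ occupies consecutive positions. By contrast, the Abel-summation estimate is routine once the indices have been reordered so that the coefficients are monotone. The previous theorem is not needed for this argument, though it immediately strengthens~(a) to the statement that all rearrangements have the common sum~$s$.
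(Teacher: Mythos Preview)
The paper does not supply its own proof of this appendix result; it only records the statement and cites \cite[Theorem 1.3.2]{KK} and \cite[Proposition 2.4.9]{Al}. So there is nothing in the paper to compare against. Your argument is correct and self-contained: the pivot condition you call (FSC) is exactly the standard subseries Cauchy criterion, and all four implications are sound. In particular, the bijection in (a)$\Rightarrow$(FSC) works because after the $k$-th round one has listed precisely $\{1,\dots,\max F_k\}$ with $F_k$ occupying a run of consecutive positions, and $\max F_k\to\infty$ guarantees surjectivity; and in (FSC)$\Rightarrow$(b) the Abel-summation estimate on each sign class gives at most $\sup_{A\subseteq\{p,\dots,q\}}\big\|\sum_{n\in A}x_n\big\|_X$, so the total is bounded by twice that quantity, which is $<\varepsilon$ once the threshold in (FSC) is taken as $\varepsilon/2$.
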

\noindent(See \cite[Theorem 1.3.2]{KK} or \cite[Proposition 2.4.9]{Al})

\smallskip

\begin{lemma}\label{Thorp-lemma}
If the series $\dis\sum_{n=1}^\infty x_n$ is uncondittionally convergent in $X$, then
\[
\left\{\sum_{n=1}^\infty \alpha_n\,x_n\,:\,\alpha_n\in\R\mbox{ \ with \ }|\alpha_n|\leq 1,\,n\in\N\right\}
\]
is a bounded subset of $X$.
\end{lemma}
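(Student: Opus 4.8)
The plan is to combine Theorem~\ref{T-1.3.2} with the principle of uniform boundedness. Write
$A=\big\{\sum_{n=1}^\infty\alpha_n\,x_n:\alpha_n\in\R,\ |\alpha_n|\le 1\big\}$. First note that $A$ is well-defined as a subset of $X$: since any sequence $\{\alpha_n\}_n$ with $|\alpha_n|\le 1$ is bounded, Theorem~\ref{T-1.3.2} guarantees that $\sum_{n=1}^\infty\alpha_n\,x_n$ converges in $X$. It therefore remains only to bound $\sup_{a\in A}\|a\|_X$.

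The key step is to pass to the dual. Fix $x^*\in X^*$. Applying the continuous linear functional $x^*$ to an arbitrary rearrangement of $\sum_n x_n$ shows that $\sum_n x^*(x_n)$ is unconditionally convergent in $\R$; since unconditional convergence of a real series is equivalent to absolute convergence, we obtain $\sum_{n=1}^\infty|x^*(x_n)|<\infty$. Consequently, for every sequence $\{\alpha_n\}_n$ with $|\alpha_n|\le 1$, using again the continuity of $x^*$ to exchange it with the series,
\[
\Big|x^*\Big(\sum_{n=1}^\infty\alpha_n\,x_n\Big)\Big|=\Big|\sum_{n=1}^\infty\alpha_n\,x^*(x_n)\Big|\le\sum_{n=1}^\infty|x^*(x_n)|<\infty.
\]
Hence $\sup_{a\in A}|x^*(a)|<\infty$ for each $x^*\in X^*$, i.e. $A$ is weakly bounded.

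Finally, regard each $a\in A$ as an element $\widehat a\in X^{**}$ via the canonical isometric embedding. The previous paragraph says that the family $\{\widehat a:a\in A\}\subset L(X^*,\R)$ is pointwise bounded on the Banach space $X^*$, so by the Banach--Steinhaus theorem $\sup_{a\in A}\|\widehat a\|_{X^{**}}=\sup_{a\in A}\|a\|_X<\infty$, which is precisely the assertion.

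I expect the only delicate point to be this last step, the passage from weak boundedness to norm boundedness; it is the standard application of the uniform boundedness principle to the canonical images in the bidual, while the rest is routine handling of absolutely convergent real series and continuity of functionals. An alternative route that avoids the bidual is to first establish $M:=\sup\{\|\sum_{n\in F}x_n\|_X:F\subset\N\text{ finite}\}<\infty$, and then observe that for each $N$ the vector $\sum_{n\le N}\alpha_n\,x_n$ lies in the convex hull of the finitely many vectors $\sum_{n\le N}\varepsilon_n\,x_n$ with $\varepsilon_n\in\{-1,1\}$, each of norm at most $2M$; letting $N\to\infty$ yields the uniform bound $2M$ for every element of $A$.
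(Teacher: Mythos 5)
Your proof is correct. Note first that the paper itself gives no argument for this lemma: it is stated in the appendix with a bare citation to Thorp's Lemma~1, so there is no internal proof to compare against; your write-up is a valid self-contained substitute. Your main route is sound in every step: well-definedness of the sums via Theorem~\ref{T-1.3.2}, the reduction to $\sum_n|x^*(x_n)|<\infty$ for each $x^*$ (which, incidentally, is exactly Proposition~\ref{UC-wAC} of the appendix, so you could have cited it instead of rederiving it from the Riemann rearrangement theorem), and the passage from weak boundedness to norm boundedness by applying Banach--Steinhaus to the canonical images in $X^{**}$. Your alternative route is also correct and arguably closer in spirit to Thorp's elementary argument; its one unproved ingredient, $M=\sup\{\|\sum_{n\in F}x_n\|_X : F\subset\N\ \text{finite}\}<\infty$, follows from the Cauchy criterion for unconditional convergence (for every $\varepsilon>0$ there is $N$ with $\|\sum_{n\in F}x_n\|_X<\varepsilon$ for all finite $F\subset\{N+1,N+2,\dots\}$, and the finitely many initial terms contribute a fixed amount), after which the convex-hull observation cleanly gives the explicit bound $2M$. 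Either version would serve as a complete proof; the second has the advantage of avoiding the bidual and producing a quantitative constant.
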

\noindent(See \cite[Lemma 1]{Thorp})

\medskip

It is clear that absolute convergence implies unconditional convergence. However, the converse is not true in general.

\smallskip

\begin{theorem}{\rm (Dvoretzky-Rogers)}\label{Theo-DR}
If every unconditionally convergent series in an Banach space $X$ is absolutely convergent, then the dimension of $X$ is finite.
\end{theorem}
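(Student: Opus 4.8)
I would prove the statement in its contrapositive form: if $\dim X=\infty$, then $X$ carries an unconditionally convergent series that is not absolutely convergent. The whole argument rests on one finite-dimensional ingredient, the \emph{Dvoretzky--Rogers lemma}: every $n$-dimensional normed space admits a Euclidean norm $|\cdot|$ with $\|x\|\le|x|$ for all $x$ together with a $|\cdot|$-orthonormal basis $e_1,\dots,e_n$ satisfying $\|e_k\|\ge\bigl((n-k+1)/n\bigr)^{1/2}$; in particular the first $\lfloor n/2\rfloor$ of these vectors have $\|e_k\|\ge\tfrac{1}{\sqrt2}$, and by orthonormality combined with $\|\cdot\|\le|\cdot|$ one gets $\bigl\|\sum_k a_k e_k\bigr\|\le\bigl(\sum_k|a_k|^2\bigr)^{1/2}$ for all scalars $a_k$. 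I would either invoke this lemma as a classical fact or reproduce its proof via John's maximal-volume inscribed ellipsoid.

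Granting the lemma, the construction is a block procedure. Since $\dim X=\infty$, for each $j\in\N$ I pick inside $X$ a subspace of dimension $2^{j}$ and apply the lemma there with $m_j:=2^{j-1}$, obtaining vectors $u^{(j)}_1,\dots,u^{(j)}_{m_j}$ with $\|u^{(j)}_k\|\ge\tfrac{1}{\sqrt2}$ and $\bigl\|\sum_{k\in I}a_k u^{(j)}_k\bigr\|\le\bigl(\sum_{k\in I}a_k^2\bigr)^{1/2}$ for every $I\subset\{1,\dots,m_j\}$. Fix the weights $\lambda_j:=2^{-j/2}j^{-2}$ and let $\sum_n x_n$ be the series obtained by listing the blocks $\lambda_j u^{(j)}_1,\dots,\lambda_j u^{(j)}_{m_j}$, $j=1,2,\dots$, in succession. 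For unconditional convergence I apply Theorem~\ref{T-1.3.2}: for any bounded scalar sequence $(\alpha_n)$, say $|\alpha_n|\le C$, the contribution of the $j$-th block to any partial sum of $\sum_n\alpha_n x_n$ has norm at most $C\lambda_j m_j^{1/2}=C\,2^{-1/2}j^{-2}$, so, since $\sum_j j^{-2}<\infty$, the partial sums of $\sum_n\alpha_n x_n$ are Cauchy and hence convergent; thus $\sum_n x_n$ is unconditionally convergent. On the other hand $\sum_n\|x_n\|\ge\tfrac{1}{\sqrt2}\sum_j m_j\lambda_j=\tfrac{1}{\sqrt2}\sum_j 2^{\,j/2-1}j^{-2}=\infty$, so the series is not absolutely convergent. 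This contradicts the hypothesis, and therefore $\dim X<\infty$.

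The bookkeeping in the second step is routine; it is governed entirely by the two elementary requirements $\sum_j\lambda_j m_j^{1/2}<\infty$ (which yields unconditional convergence) and $\sum_j\lambda_j m_j=\infty$ (which yields non-absolute convergence), both met by the weights above. The genuine difficulty is concentrated in the Dvoretzky--Rogers lemma: producing, in an $n$-dimensional space, on the order of $n$ unit vectors that behave like an orthonormal system for a dominating inner product. I expect essentially all of the real work to lie there, relying on the existence and on the contact-point/decomposition-of-identity optimality properties of the John ellipsoid; since this lemma is classical and its proof is tangential to the themes of this survey, citing it is a legitimate alternative to including it.
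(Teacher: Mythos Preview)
Your outline is correct and follows the classical Dvoretzky--Rogers argument: the finite-dimensional lemma (via John's ellipsoid) produces, in each $n$-dimensional subspace, roughly $n/2$ vectors that are simultaneously bounded below in norm and behave $\ell_2$-orthonormally from above; the block construction with weights $\lambda_j$ chosen so that $\sum_j\lambda_j m_j^{1/2}<\infty$ but $\sum_j\lambda_j m_j=\infty$ then yields the desired series, and your appeal to Theorem~\ref{T-1.3.2} to upgrade the block-wise Cauchy estimate to unconditional convergence is legitimate.

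That said, the paper does not prove this theorem at all: it is stated in the Appendix as a classical fact with citations to \cite{DR}, \cite[Theorem~8.2.14]{Al}, and \cite[Chapter~VI]{D}, and is used only as a black box (in Theorem~\ref{infinite-dim}) to guarantee the existence of an unconditionally but not absolutely convergent series in any infinite-dimensional $Y$. So there is no ``paper's own proof'' to compare against; your sketch simply supplies what the paper deliberately outsources to the literature, and is in line with the standard references cited.
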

\noindent(See \cite{DR}, \cite[Theorem 8.2.14]{Al} or \cite[Chapter VI]{D})

\begin{corollary}\label{D-R}
In every infinite-dimensional Banach space 
there exists an unconditionally convergent series that is not absolutely convergent.
\end{corollary}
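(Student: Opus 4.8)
The plan is to obtain this as an immediate contrapositive of the Dvoretzky--Rogers Theorem (Theorem~\ref{Theo-DR}). Suppose, for contradiction, that $X$ is an infinite-dimensional Banach space in which \emph{every} unconditionally convergent series is absolutely convergent. This is exactly the hypothesis of Theorem~\ref{Theo-DR}, so that theorem forces the dimension of $X$ to be finite, contradicting the assumption. Hence in any infinite-dimensional Banach space there must exist at least one unconditionally convergent series that fails to be absolutely convergent.

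The only thing to be careful about is purely logical bookkeeping: Theorem~\ref{Theo-DR} is an implication of the form ``(all unconditionally convergent series in $X$ are absolutely convergent) $\Longrightarrow$ ($\dim X<\infty$)'', and the corollary is precisely its contrapositive, ``($\dim X=\infty$) $\Longrightarrow$ (there exists an unconditionally convergent series in $X$ that is not absolutely convergent)''. No construction is carried out at this level; the existence of the desired series is delivered abstractly by Theorem~\ref{Theo-DR}.

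Consequently there is no genuine obstacle in proving the corollary itself: all the substance lies in Theorem~\ref{Theo-DR}, i.e.\ in producing, inside an arbitrary infinite-dimensional space, a sequence $\{y_n\}$ with $\sum_{n} y_n$ unconditionally convergent while $\sum_n\|y_n\|_X=\infty$; for that the excerpt refers to \cite{DR}, \cite[Theorem~8.2.14]{Al} and \cite[Chapter~VI]{D}. In concrete situations one can of course avoid the general theorem and exhibit such a series by hand --- for instance the sequence $y_k=\tfrac1k e_k$ appearing in Example~\ref{ex.1} already witnesses the phenomenon in $\ell_2$ --- but for a general infinite-dimensional $X$ the clean route is simply to invoke Theorem~\ref{Theo-DR}.
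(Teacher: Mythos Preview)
Your argument is correct and matches the paper's treatment: the corollary is stated immediately after Theorem~\ref{Theo-DR} with no separate proof, since it is precisely the contrapositive of that theorem. There is nothing to add.
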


\smallskip

In the sequel we recall some aspects of convergence of series involving weak topology.

\smallskip

\begin{definition}
Let $x_n\in X$ for $n\in\N$. We say that:
\begin{enumerate}
\item The sequence $\{x_n\}_n$ is a weakly Cauchy sequence if  the sequence $\{x^*(x_n)\}_n$ converges in $\R$ for every $x^*\in X^*$
\item The series $\dis\sum_{n=1}^\infty x_n$ is weakly convergent if there exists $z\in X$ such that  
the series $\dis\sum_{n=1}^\infty x^*(x_n)$ converges to $x^*(z)$ for every $x^*\in X^*$.
\item The series $\dis\sum_{n=1}^\infty x_n$ is weakly absolutely convergent if $\dis\sum_{n=1}^\infty |x^*(x_n)|<\infty$ 
for every $x^*\in X^*$.
\end{enumerate}
\end{definition}

\smallskip

\begin{proposition}\label{UC-wAC}
If the series $\dis\sum_{n=1}^\infty x_n$ is unconditionally convergent, then it is weakly absolutely convergent.
\end{proposition}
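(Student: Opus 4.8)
The plan is to deduce the claim directly from the scalar-multiplier characterization of unconditional convergence, namely Theorem \ref{T-1.3.2}. I would fix an arbitrary $x^*\in X^*$, the goal being to show that $\sum_{n=1}^\infty|x^*(x_n)|<\infty$, which is exactly weak absolute convergence once $x^*$ is seen to be arbitrary.

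First I would introduce the natural sign multipliers, setting $\alpha_n={\rm sgn}\big(x^*(x_n)\big)$ for each $n\in\N$. Then $\{\alpha_n\}_n$ is a bounded sequence in $\R$ (indeed $|\alpha_n|\le 1$), and $\alpha_n\,x^*(x_n)=|x^*(x_n)|$ for every $n$. Since $\sum_{n=1}^\infty x_n$ is unconditionally convergent, condition (b) of Theorem \ref{T-1.3.2} applies to the bounded sequence $\{\alpha_n\}_n$ and yields that the series $\sum_{n=1}^\infty\alpha_n\,x_n$ converges in $X$, say to some $z\in X$.

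Next I would apply the continuous linear functional $x^*$ to this series. By linearity, the $N$-th partial sum satisfies $x^*\big(\sum_{n=1}^N\alpha_n\,x_n\big)=\sum_{n=1}^N\alpha_n\,x^*(x_n)=\sum_{n=1}^N|x^*(x_n)|$, and by continuity of $x^*$ the left-hand side converges to $x^*(z)$ as $N\to\infty$. Hence the nonnegative series $\sum_{n=1}^\infty|x^*(x_n)|$ has nondecreasing partial sums converging to the finite number $x^*(z)$, so it converges and $\sum_{n=1}^\infty|x^*(x_n)|=x^*(z)<\infty$. As $x^*\in X^*$ was arbitrary, this is precisely the statement that $\sum_{n=1}^\infty x_n$ is weakly absolutely convergent.

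There is essentially no obstacle here: the argument is a one-line consequence of Theorem \ref{T-1.3.2} once the correct multiplier sequence has been identified. The only point deserving a moment's attention is the elementary fact that ${\rm sgn}(t)\cdot t=|t|$ together with the boundedness of the sequence of signs, which is what legitimizes invoking part (b) of that theorem; everything else is just linearity and continuity of $x^*$.
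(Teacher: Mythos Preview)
Your proof is correct. The paper does not actually supply a proof of this proposition; it merely cites \cite[Proposition~2.4.4~(iii)]{Al}, so there is nothing to compare your argument against. Your approach---using the sign multipliers $\alpha_n={\rm sgn}\big(x^*(x_n)\big)$ together with the scalar-multiplier characterization of unconditional convergence from Theorem~\ref{T-1.3.2}---is the standard argument and is fully rigorous.
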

\noindent(See \cite[Proposition 2.4.4 $(iii)$]{Al})

\medskip

The converse of Proposition \ref{UC-wAC} charaterizes in an important class of Banach spaces.

\smallskip

\begin{theorem} {\rm (Bessaga-Pelczynski)}\label{Theo-BP}
A Banach space $X$ does not contain an isomorphic copy of $c_0$ if and only if 
every weakly absolutely convergent series in $X$ is unconditionally convergent.
\end{theorem}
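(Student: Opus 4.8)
The plan is to prove both implications by contraposition. For the easy direction, suppose $X$ contains a subspace isomorphic to $c_0$, say via an isomorphism $\psi\colon c_0\to Z\subset X$, and let $(e_n)$ denote the unit vector basis of $c_0$. I would test the series $\sum_{n=1}^\infty\psi(e_n)$ in $X$. It is weakly absolutely convergent, since for every $x^*\in X^*$ the functional $x^*\circ\psi$ belongs to $c_0^*=\ell_1$, whence $\sum_n|x^*(\psi(e_n))|=\|x^*\circ\psi\|_{\ell_1}<\infty$. On the other hand its partial sums are not Cauchy: for $m>n$ one has $\|\psi(e_1+\cdots+e_m)-\psi(e_1+\cdots+e_n)\|\ge c\,\|e_{n+1}+\cdots+e_m\|_\infty=c$, where $c=\|\psi^{-1}\|^{-1}>0$. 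Hence this series does not converge, and in particular is not unconditionally convergent, which is the sought counterexample.

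For the hard direction, assume some series $\sum_{n=1}^\infty x_n$ in $X$ is weakly absolutely convergent but not unconditionally convergent; the goal is to produce a copy of $c_0$ inside $X$. Since $x^*\mapsto\sum_n|x^*(x_n)|$ is finite for every $x^*\in X^*$, the family $\{\sum_{n\in F}a_nx_n:F\subset\N\text{ finite},\ |a_n|\le1\}$ is weakly bounded, hence norm bounded by some constant $M$ (uniform boundedness principle). Because $\sum_n x_n$ is not unconditionally convergent, the net of its finite partial sums is not Cauchy, so there are $\varepsilon>0$ and successive finite blocks $F_1<F_2<\cdots$ (i.e. $\max F_k<\min F_{k+1}$) with $\|y_k\|>\varepsilon$ for $y_k:=\sum_{n\in F_k}x_n$. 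Then $\varepsilon<\|y_k\|\le M$, and $\sum_k y_k$ is again weakly absolutely convergent since $\sum_k|x^*(y_k)|\le\sum_n|x^*(x_n)|$, so $(y_k)$ is weakly null. Being bounded, weakly null and bounded away from $0$ in norm, $(y_k)$ admits, by the Bessaga--Pelczynski selection principle, a subsequence $(y_{k_j})_j$ that is a basic sequence, say with basis constant $K$; setting $z_j:=y_{k_j}/\|y_{k_j}\|$ one obtains a normalized basic sequence whose series $\sum_j z_j$ is still weakly absolutely convergent.

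Two complementary estimates then finish the proof. From weak absolute convergence of $\sum_j z_j$ one gets $M':=\sup\{\|\sum_{j\in F}\theta_j z_j\|:F\text{ finite},\ |\theta_j|=1\}<\infty$, and splitting finitely many scalars $a_j$ into positive and negative parts and integrating over the level sets of $|a_j|$ gives $\|\sum_j a_jz_j\|\le 2M'\sup_j|a_j|$; in the opposite direction, since $(z_j)$ is basic, each coefficient obeys $|a_m|=\|a_mz_m\|\le 2K\,\|\sum_j a_jz_j\|$, so $\|\sum_j a_jz_j\|\ge(2K)^{-1}\sup_j|a_j|$. Together these show that $(z_j)$ is equivalent to the unit vector basis of $c_0$, so $\overline{\operatorname{span}}\{z_j:j\in\N\}$ is a subspace of $X$ isomorphic to $c_0$, contradicting the hypothesis. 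The one genuinely nontrivial ingredient here is the extraction of a basic subsequence from a bounded, weakly null sequence that stays away from $0$ in norm --- the Bessaga--Pelczynski selection principle, obtained by a gliding-hump construction; everything else is routine bookkeeping with the uniform boundedness principle and the classical two-sided estimates for normalized basic sequences equivalent to $(e_n)$ in $c_0$.
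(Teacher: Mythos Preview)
The paper does not prove this theorem at all; it is stated in the appendix with references to \cite[Theorem V.8]{D}, \cite[Theorem 6.4.3]{KK}, and \cite[Theorem 2.4.11]{Al}. Your argument is correct and is essentially the classical proof found in those references: the forward direction via the image of the $c_0$ unit vector basis under the embedding, and the converse by extracting bounded, weakly null blocks $y_k$ with $\|y_k\|>\varepsilon$, passing to a basic subsequence via the selection principle, and showing the two-sided $c_0$ estimate. One minor remark: the upper bound $\|\sum_j a_j z_j\|\le C\sup_j|a_j|$ can be obtained with constant $M'$ rather than $2M'$ by choosing a norming functional $x^*$ and observing $\sum_j a_j x^*(z_j)\le\sum_j|x^*(z_j)|=x^*\big(\sum_j\operatorname{sgn}(x^*(z_j))z_j\big)\le M'$, which avoids the level-set integration; but your route is also valid.
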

\noindent(See \cite[Theorem V.8]{D}, \cite[Theorem 6.4.3]{KK} or \cite[Theorem 2.4.11]{Al})

\medskip

Another important class of spaces which is worth mentioning is the class of weakly sequentially complete Banach spaces.  
Recall that $X$ is weakly sequentially complete if every weakly Cauchy sequence is weakly convergent in $X$. 
In which concerns series in such spaces, we have the following result. 

\smallskip

\begin{theorem}\label{wsc-series}
If $X$ is weakly sequentially complete, then every weakly unconditionally convergent series is unconditionally convergent in $X$.  
\end{theorem}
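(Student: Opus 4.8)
We read the hypothesis ``$\sum x_n$ weakly unconditionally convergent'' as ``$\sum_n|x^*(x_n)|<\infty$ for every $x^*\in X^*$'', i.e. $\sum x_n$ is weakly absolutely convergent in the sense of the appendix (these are the usual equivalent formulations of a weakly unconditionally Cauchy series). The plan is then to deduce the statement from the Bessaga--Pelczynski theorem (Theorem \ref{Theo-BP}) together with the observation that a weakly sequentially complete space cannot contain an isomorphic copy of $c_0$. Indeed, once we know that $X$ contains no copy of $c_0$, Theorem \ref{Theo-BP} says that every weakly absolutely convergent series in $X$ is unconditionally convergent, which is precisely the assertion. So the whole proof reduces to showing: \emph{if $X$ is weakly sequentially complete, then $X$ contains no subspace isomorphic to $c_0$.}

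To prove this I would argue by contradiction, using two short lemmas. First, weak sequential completeness passes to closed subspaces: if $Z\subseteq X$ is a closed subspace and $(z_n)\subset Z$ is weakly Cauchy in $Z$, then for each $x^*\in X^*$ the restriction $x^*|_Z$ lies in $Z^*$, so $\big(x^*(z_n)\big)_n$ converges; hence $(z_n)$ is weakly Cauchy in $X$ and, by hypothesis, converges weakly to some $z\in X$. Since a closed subspace is weakly closed (being an intersection of kernels of continuous functionals, or by Mazur's theorem), $z\in Z$, and extending functionals from $Z^*$ to $X^*$ via Hahn--Banach one checks $z_n\to z$ weakly in $Z$; thus $Z$ is weakly sequentially complete, a property obviously invariant under isomorphism. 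Second, $c_0$ is \emph{not} weakly sequentially complete: writing $(e_k)$ for the unit vectors and $u_n=e_1+\dots+e_n$, the sequence $(u_n)$ is weakly Cauchy because $c_0^*=\ell_1$ and $\sum_k y_k$ converges for every $y=(y_k)\in\ell_1$, yet it has no weak limit in $c_0$, since a weak limit would have to have every coordinate equal to $1$, i.e. be the constant sequence $(1,1,\dots)\notin c_0$. Combining the two lemmas: an isomorphic copy of $c_0$ inside $X$ would be weakly sequentially complete, which is impossible. Hence $X$ contains no copy of $c_0$, and Theorem \ref{Theo-BP} finishes the proof.

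The genuine content is packaged in Theorem \ref{Theo-BP}, whose proof rests on the Bessaga--Pelczynski selection principle: a weakly absolutely convergent series that fails to converge unconditionally admits a block sequence of partial sums equivalent to the unit vector basis of $c_0$. That selection step is where the real difficulty lies; everything else above is routine bookkeeping with the weak topology (Hahn--Banach extensions, weak closedness of closed subspaces, weak continuity of bounded operators). For this write-up I would simply invoke Theorem \ref{Theo-BP} and keep the argument to the two lemmas; a reader wanting a self-contained proof should be pointed to the selection principle as the essential ingredient.
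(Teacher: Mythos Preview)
The paper does not actually prove Theorem~\ref{wsc-series}; it merely records it in the Appendix with pointers to \cite[Theorem~3.2.3]{HP} and \cite[Corollary~2.4.15]{Al}. So there is no in-paper argument to compare against.

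Your proposal is correct and is essentially the route taken in one of those cited sources (Albiac--Kalton): show that a weakly sequentially complete space contains no isomorphic copy of $c_0$, then invoke the Bessaga--Pelczynski theorem (Theorem~\ref{Theo-BP}). Your two lemmas---hereditability of weak sequential completeness to closed subspaces, and the failure of weak sequential completeness in $c_0$ via the partial sums $u_n=e_1+\dots+e_n$---are standard and correctly argued, and your reading of ``weakly unconditionally convergent'' as ``$\sum_n|x^*(x_n)|<\infty$ for all $x^*\in X^*$'' matches the paper's own Definition of weakly absolutely convergent series (the two phrases are used interchangeably here). The only remark is that you are, as you yourself note, outsourcing all the hard work to Theorem~\ref{Theo-BP}; since the paper does exactly the same, that is entirely appropriate.
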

\noindent(See \cite[Theorem 3.2.3]{HP} or \cite[Corollary 2.4.15]{Al})


\end{document}